\numberwithin{equation}{section}
\theoremstyle{plain}
\newtheorem{theorem}[subsection]{Theorem}
\newtheorem{proposition}[subsection]{Proposition}
\newtheorem{lemma}[subsection]{Lemma}
\newtheorem{corollary}[subsection]{Corollary}
\theoremstyle{definition}
\newtheorem{definition}[subsection]{Definition}
\newtheorem{remark}[subsection]{Remark}
\newtheorem{example}[subsection]{Example}
\newtheorem{examples}[subsection]{Examples}
\newcommand\R{\mathbb{R}}
\newcommand\N{\mathbb{N}}
\renewcommand\P{\mathcal{P}}
\newcommand\p{\mathbb{P}}
\newcommand\F{\mathscr{F}}
\newcommand\E{\mathcal{E}}
\newcommand\G{\mathscr{G}}
\newcommand\X{\mathcal{X}}
\newcommand\D{\mathcal{D}}
\newcommand\B{\mathcal{B}}
\newcommand\Y{\mathcal{Y}}
\newcommand\eps{\varepsilon}
\begin{document}

\title{Measures and integrals in conditional set theory}

\author{Asgar Jamneshan}
\address{Department of Mathematics and Statistics, University of Konstanz}
\email{asgar.jamneshan@uni-konstanz.de}
\thanks{A.J.~and M.K.~gratefully acknowledge financial support from DFG project KU 2740/2-1.  The authors would like to thank an anonymous referee for a careful reading of the manuscript and helpful comments and suggestions.}

\author{Michael Kupper}
\address{Department of Mathematics and Statistics, University of Konstanz}
\email{kupper@uni-konstanz.de}

\author{Martin Streckfu\ss}
\address{Department of Mathematics, Humboldt University Berlin}
\email{streckfu@math.hu-berlin.de}

\begin{abstract}  
The aim of this article is to establish basic results in a conditional measure theory. 
The results are applied to prove that arbitrary kernels and conditional distributions are represented by measures in a conditional set theory. 
In particular, this extends the usual representation results for separable spaces.  
\end{abstract}

\subjclass[2010]{03C90,28B15,46G10,60Axx}

\maketitle

\section{Introduction}\label{s:introduction}

A random variable  $\xi$ is a measurable function from a probability space $(\Omega,\F,\p)$ into a measurable space $(E,\mathcal{E})$.  
Given a sub-$\sigma$-algebra $\G\subseteq \F$, the conditional distribution of $\xi$ given $\G$, i.e.~the quantity 
$\mathbb{E}[1_{\{\xi\in A\}}|\G](\omega)$ for $A\in \mathcal{E}$ and $\omega\in \Omega$, where $\mathbb{E}[\cdot|\G]$ denotes conditional expectation,  can be computed via a probability kernel $\kappa\colon \Omega\times \mathcal{E}\to [0,1]$ whenever $(E,\mathcal{E})$ is a standard Borel space.  
In particular, one has
\begin{equation*}\label{eq87}
\mathbb{E}[f(\xi)|\G](\omega)=\int f(x) \kappa(\omega,dx) \quad \text{almost surely,}
\end{equation*}
for every measurable function $f\colon E\to \mathbb{R}$ such that $f(\xi)$ is integrable.  
By a conditional measure theory, one can extend the previous representation results to random variables with values in an arbitrary measurable space.  
More precisely, we will show that a conditional distribution corresponds to a real-valued measure in conditional set theory and that $\mathbb{E}[f(\xi)|\G]$ can be computed with the help of a conditional Lebesgue integral. 
Conditional measure theory suggests a natural formalism to study kernels and conditional distributions in a purely measure-theoretic context without unnecessary topological assumptions such as separability. 

We will differ from the abstract setting in \cite{DJKK13}, where conditional set theory is developed relative to an arbitrary complete Boolean algebra.  
Motivated by the aforementioned applications, we construct a conditional measure theory relative to the complete Boolean algebra obtained from a probability space by quotienting out null sets.
In accordance with this aim,  we restrict attention to a class of conditional sets which are associated to spaces of random variables stable under countable concatenations. 
By a straightforward generalization, one can extend all results in this article to the abstract setting in \cite{DJKK13}.

\subsection*{Our contribution}  
We establish basic results in conditional measure theory, e.g.~a $\pi$-$\lambda$-theorem, a Carath\'eodory extension theorem, construction of a Lebesgue measure and a Lebesgue integral, see Section  \ref{s:measure}.  
These results are applied to connect kernels and conditional distributions with measures in a conditional theory in Section \ref{s:kernel}. 
A conditional version of some standard theorems in measure theory is proved in Section \ref{s:theorems}.

\subsection*{Related literature}
A conditional set is an abstract set-like structure, see \cite{DJKK13} for a thorough introduction. 
Conditional set theory is closely related to Boolean-valued models and topos theory, see \cite{jamneshan2014topos}. 
Basic results for Riemann integration in a Boolean-valued model are studied in \cite[Chapter 2]{takeuti2015two}.  
See \cite{Cheridito2012,DKKM13,kupper03,frittelli2014conditional,jamneshan2017compact,OZ2017stabil} for further results in conditional analysis and conditional set theory. 
For applications of  conditional analysis, we refer to \cite{BH14,martinsamuel13,ch11,cs12,DJ13,drapeau2017fenchel,kupper11,frittelli14,HR1987conditioning,JKZ2017control}. See \cite{kuppermaccheroni, guo10} for other related results.

\section{Preliminaries}\label{s:notation}

Throughout, we fix a complete probability space $(\Omega,\F,\p)$, and identify $A,B\in \F$ whenever $\p(A\Delta B)=0$, where $\Delta$ denotes symmetric difference. 
In particular, $A\subseteq B$ is understood in the almost sure (a.s.) sense. 
Let $\mathscr{F}_+$ denote the set of all $A\in\mathscr{F}$ with $\p(A)>0$. 
For a function $x$ on $\Omega$ with values in an arbitrary set $E$ and $A\in \F$, we denote by $x|A\colon A\to E$ the restriction of $x$ to $A$. 
If $x,y\colon \Omega \to E$ are functions and $A,B\in \F$, then we always identify $x|A$ with $y|B$ whenever $\p(A\Delta B)=0$ and $x(\omega)=y(\omega)$ for a.a. $\omega\in A \cap B$. 
A partition is a countable family $(A_k)$ of measurable subsets of $\Omega$ which is pairwise disjoint and such that $\cup_k A_k =\Omega$. 
Given a sequence of functions $x_k\colon \Omega\to E$ and a partition $(A_k)$, we denote by $\sum x_k|A_k$ the unique function $x\colon \Omega \to E$ with the property that $x|A_k=x_k|A_k$ for all $k$. 
For a set $X$ of functions $x\colon\Omega\to E$ and $A\in \F$, let $X|A:=\{x|A\colon x\in X\}$, and define $(X|A)|B:=X|A\cap B$ for all $B\in\F$. 
Notice that $X|\emptyset=\{\ast\}$ is a singleton and trivially $X|\Omega=X$. 

Denote by $\bar L^0=\bar L^0(\Omega,\F,\mathbb{P})$ the set of all measurable functions $x\colon \Omega\to [-\infty,\infty]$, and by $L^0$ its subset of all real-valued functions. 
On $\bar L^0$ consider the complete order $x\leq y$ a.s. 
Recall that on $L^0$ this order is Dedekind complete. 
Let $\bar L^0_+:=\{x\in \bar L^0\colon x\geq 0\}$, $L^0_+:=\{x\in  L^0\colon x\geq 0\}$ and $L^0_{++}:=\{x\in L^0\colon x>0\}$.  
For an arbitrary subset $\G\subseteq \F$, its supremum w.r.t.~almost sure inclusion is the measurable set $A\in \F$ (unique mod a.s. identification) such that  $1_A=\sup \{1_{A^\prime}\colon A^\prime\in \G\}$.  
Similarly, we define the infimum of $\G$, and denote them by $\sup \{A^\prime\colon A^\prime\in \G\}$ and   $\inf \{A^\prime\colon A^\prime\in \G\}$, respectively.  
Throughout, all order relations between extended real-valued random variables are understood in the a.s.~sense.  
\begin{definition}
A set $X$ of functions on $\Omega$ with values in a fixed image space $E$  is said to be \emph{stable under countable concatenations}, or \emph{stable}  for short, if $X\neq\emptyset$ and $\sum x_k|{A_k} \in X$ for every partition $(A_k)$ and each sequence $(x_k)$ in $X$.  
\end{definition}
The spaces $\bar L^0$, $L^0$, $\bar L^0_+$, $L^0_+$ and $L^0_{++}$ are stable sets of functions on $\Omega$. 
We provide more examples which are important in the sequel. 
\begin{examples}\label{exp:sets}
\begin{itemize}[fullwidth]
\item[1)] Let $E$ be a  nonempty  set,  $(x_k)$ a sequence in $E$ and $(A_k)$ a partition. Let $\sum x_k|A_k$ denote the function on $\Omega$ with value $x_k$ on $A_k$ for each $k$ and call it a \emph{step function} with values in $E$. The set of all step functions with values in $E$ is a stable set of functions on $\Omega$ and is  denoted by $L^0_s(E)$. 
In particular, $L^0_s(\N)$ denotes the set of all step functions with values in the natural numbers. 
\item[2)] Let $(E,\mathscr{E})$ be an arbitrary measurable space. The set of all measurable functions  on $\Omega$ with values in $E$ is stable and is denoted by $L^0_m(E)$. 
\item[3)]  If $E$ is a topological space and $\mathscr{E}$ its Borel $\sigma$-algebra. 
The set of all strongly measurable\footnote{Strongly measurable means Borel measurable and essentially separably-valued.} functions on $\Omega$ with values in $E$ is stable and is denoted by $L^0(E)$. 
\item[4)] Every nonempty order-bounded  subset of $L^p$ for $0\leq p\leq \infty$ is stable. 
\item[5)] Given a  nonempty  subset $V$ of a stable set of functions on $\Omega$, let 
\[
\text{st}(V):=\left\{\sum x_k|A_k\colon (A_k)\text{ partition, } (x_k) \text{ sequence in } V\right\}. 
\]
We call $\text{st}(V)$ the \emph{stable hull} of $V$. 
\item[6)] Let $X$ be a stable set of functions on $\Omega$, $(V_k)$ a sequence of stable subsets of $X$ and $(A_k)$ a partition. 
Then 
\begin{equation}\label{eq:setconcatenation}
\sum V_k|A_k:=\left\{\sum x_k|A_k\colon x_k\in V_k \text{ for each } k\right\}
\end{equation}
is a stable subset of $X$. 
\item[7)] Let $X_i$ be a stable set of functions on $\Omega$ with image spaces $E_i$ for each $i$ in an index. Their \emph{Cartesian product}
\[
\prod X_i:=\left\{x\colon \Omega\to \prod E_i \colon  x(\omega)=(x_i(\omega)), x_i\in X_i \text{ for each }i\right\}
\]
\end{itemize}
 is a stable set of functions on $\Omega$. 
\end{examples}
\begin{remark}\label{r:sep}
In general, $L^0(E)$ is strictly smaller than $L^0_m(E)$ for a topological space $E$.  
However, if $(E,\mathscr{E})$ is a standard Borel space, then $L^0(E)$ and $L^0_m(E)$ coincide.  
\end{remark}
We have a close connection between stable sets of functions on $(\Omega,\F,\p)$ and conditional sets of the associated measure algebra which will allows us to apply the machinery of conditional set theory \cite{DJKK13}. 
We start by defining an analog of the conditional power set. 
\begin{definition}
Let $X$ be a stable set of functions on $\Omega$. 
Let $\P(X)$ denote the collection of all \emph{conditional subsets} $V|A$, where $V$ is a stable subset of $X$ and $A\in \F$. 
A countable concatenation in $\mathcal{P}(X)$ is defined by 
\begin{equation}\label{eq:stabilitypowerset}
\sum (V_k|B_k)|A_k:= (\sum V_k|A_k)|\cup_k (A_k \cap B_k) \in \mathcal{P}(X),  
\end{equation}
for a sequence $(V_k|B_k)$ and a partition $(A_k)$, where $\sum V_k|A_k$ is defined in \eqref{eq:setconcatenation}. 
A subset of $\P(X)$ is called a \emph{stable collection} if it is nonempty and closed under countable concatenations.  
\end{definition}
In the present context, the conditional inclusion relation \cite[Definition 2.9]{DJKK13} reads as follows. 
\begin{definition}\label{d:inclusion}
The \emph{conditional inclusion relation} on $\mathcal{P}(X)$ is the binary relation 
\[
V|A \sqsubseteq W|B \text{ if and only if } A\subseteq B \text{ and } V|A \subseteq W|A. 
\]
\end{definition}
An important result in conditional set theory which will be helpful for the construction of a conditional measure theory is the following theorem which is proved in \cite[Theorem 2.9]{DJKK13}. 
\begin{theorem}\label{t:mainthm}
The ordered set $(\mathcal{P}(X),\sqsubseteq)$ is a complete complemented distributive lattice. 
\end{theorem}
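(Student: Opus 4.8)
The plan is to reduce completeness to the existence of arbitrary infima, exhibit those infima by hand, and then verify distributivity and complementation directly; one combinatorial fact about the measure algebra will be used at every turn.

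\textbf{Partial order and completeness.} First I would check that $\sqsubseteq$ is reflexive, transitive and antisymmetric straight from Definition~\ref{d:inclusion}; the only delicate point is antisymmetry, where $V|A\sqsubseteq W|B\sqsubseteq V|A$ forces $A=B$ (mod null sets) and $V|A=W|A$, using the a.s.\ identifications. For infima, note that a lower bound $W|B$ of a family $(V_i|A_i)_{i\in I}$ must satisfy $B\subseteq\inf_i A_i$ and $W|B\subseteq V_i|B$ for all $i$. So I would set $\A:=\{B\in\F:B\subseteq\inf_i A_i,\ \bigcap_i(V_i|B)\neq\emptyset\}$, let $A^\ast:=\sup\A$, and claim that $\bigl(\bigcap_i V_i|A^\ast\bigr)|A^\ast$ --- read as $X|\emptyset$ if $A^\ast=\emptyset$, and otherwise identified with the stable set $\{x\in X:x|A^\ast\in\bigcap_i V_i|A^\ast\}$ cut down to $A^\ast$ --- is the infimum. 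Being a lower bound is immediate; that it is the greatest lower bound needs $A^\ast\in\A$, i.e.\ that the supremum of the admissible domains is itself admissible. This follows from: (i) $\A$ is closed under countable concatenation of its members, since if $v^{(k)}\in\bigcap_i(V_i|B_k)$ and $(C_k)$ is a partition then $\sum_k v^{(k)}|(C_k\cap B_k)\in\bigcap_i\bigl(V_i|\bigcup_k(C_k\cap B_k)\bigr)$ by stability of each $V_i$; while (ii) in the measure algebra of $(\Omega,\F,\p)$ the supremum of an arbitrary family of sets coincides with the supremum of a countable subfamily. Combining (i) and (ii) --- plus a short exhaustion --- gives $A^\ast\in\A$. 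Since $X|\Omega=X$ is the top element, a poset with all infima and a greatest element is a complete lattice, and suprema are recovered as infima of upper bounds.

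\textbf{Distributivity.} It suffices to verify the finite law $Z\sqcap(W_1\sqcup W_2)=(Z\sqcap W_1)\sqcup(Z\sqcap W_2)$; one inclusion is free. I would work with closed forms: the binary meet is the special case of the infimum above, and the binary join is $(V_1|A_1)\sqcup(V_2|A_2)=V|(A_1\cup A_2)$ where $V$ consists of those $x\in X$ whose restriction to $A_1\setminus A_2$ lies in $V_1|(A_1\setminus A_2)$, whose restriction to $A_2\setminus A_1$ lies in $V_2|(A_2\setminus A_1)$, and whose restriction to $A_1\cap A_2$ lies in $\mathrm{st}\bigl(V_1|(A_1\cap A_2)\cup V_2|(A_1\cap A_2)\bigr)$. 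With these, the identity is checked coordinatewise: the domain parts agree because the measure algebra is a distributive (Boolean) algebra, and, over each common domain, the content parts agree because the power set of the image space is distributive and because membership of a concatenation reduces, by stability, to membership of its pieces. This step is routine but bookkeeping-heavy.

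\textbf{Complements.} Given $Z=V|D$, let $F$ be the largest $B\subseteq D$ with $V|B=X|B$ --- attained again by (i)+(ii) --- and put $V^\perp:=\{x\in X:x|B\notin V|B\text{ for every nonempty }B\subseteq D\setminus F\}$. I would define the complement by
\[
Z^{c}:=\bigl(X|D^{c}\bigr)\ \sqcup\ \bigl(V^\perp\,|\,(D\setminus F)\bigr).
\]
First one checks that $V^\perp|(D\setminus F)$ is a genuine (i.e.\ nonempty) conditional subset: since $D\setminus F$ carries no full part, for every nonempty $C\subseteq D\setminus F$ there is $x$ with $x|C\notin V|C$, and cutting $x$ down to $C$ minus the largest piece on which it still lies in $V$ yields, via a greedy exhaustion of $D\setminus F$ (using (ii)), an element of $V^\perp$. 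Then $Z\sqcap Z^{c}=X|\emptyset$, because $V|C\cap V^\perp|C=\emptyset$ for every nonempty $C\subseteq D\setminus F$ by construction. Finally, to get $Z\sqcup Z^{c}=X|\Omega$ I would fix $x\in X$, let $\beta(x)$ be the largest $B\subseteq D$ with $x|B\in V|B$ (attained by (i)+(ii), and necessarily containing $F$), note that $x|\beta(x)$ lies in the content of $Z$ and, by maximality of $\beta(x)$, that $x|(D\setminus\beta(x))$ lies in the content of $V^\perp|(D\setminus F)$, hence of $Z^{c}$, while $x|D^{c}$ lies in the content of $X|D^{c}$, hence of $Z^{c}$; concatenating these three pieces over $\{\beta(x),D\setminus\beta(x),D^{c}\}$ shows $x$ belongs to the content of $Z\sqcup Z^{c}$ on $\Omega$. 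Therefore $Z^{c}$ is a complement of $Z$.

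\textbf{Expected main obstacle.} The heart of the argument is the repeated passage from ``on arbitrarily small pieces there is a good element'' to ``there is a globally good element or domain'' --- the admissibility of $A^\ast$, the attainment of $F$ and $\beta(x)$, and the nonemptiness of $V^\perp$. Each is an instance of the countable chain condition of a finite measure algebra combined with closure under countable concatenation, and this is precisely where the completeness of $\F/\p$ and the stability hypothesis are indispensable; the remaining work --- the order axioms, the distributive law, and the verification that $Z^{c}$ actually lies in $\P(X)$ --- is routine but requires some care with the almost sure identifications.
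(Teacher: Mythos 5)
Your proposal is correct and follows essentially the same route as the paper: the infimum is obtained by the same exhaustion over admissible domains (closure under countable concatenation combined with the countable chain condition of the probability measure algebra), and your explicit complement --- $X$ on $D^c$ together with the elements lying locally outside $V$ everywhere on the complement $D\setminus F$ of the largest ``full'' part --- is exactly the paper's representation of $(V|A)^\sqsubset$. The only presentational differences are that you recover suprema as infima of upper bounds (the paper constructs them directly, by the same concatenation formula you use for the binary join) and that you sketch the distributivity verification, which the paper delegates to the cited reference.
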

\begin{proof}  
For the sake of completeness, we provide the main constructions of the proof in the present setting. 
Notice that $X$ is the greatest and $X|\emptyset=\{\ast\}$ the least element in $\mathcal{P}(X)$. 
Let $(V_i|A_i)$ be a  nonempty  family in $\mathcal{P}(X)$. 
We construct its supremum and infimum w.r.t.~$\sqsubseteq$. 
Put $A:=\sup_i A_i$ and fix some $x_0\in X$. 
Define 
\[
V:=\left\{\sum x_k|B_k + x_0|A^c \colon \text{ for all }k \text{ there is } i \text{ s.t. }B_k\subseteq A_{i} \text{ and } x_k\in V_{i}  \text{ and }(B_k) \text{ is a partition of } A\right\}. 
\]
The conditional subset 
\begin{equation}\label{eq:union}
\sqcup_i V_i|A_i:=V|A
\end{equation}
is the supremum of $(V_i|A_i)$. 
As for its infimum, let 
\[
\mathscr{G}=\{A\in\mathscr{F}\colon A\subseteq \inf_i A_i \text{ and } \cap_i V_i|A\neq \emptyset\}. 
\]
Put $A_\ast=\sup \mathscr{G}$. We show that $A_\ast$ is attained. Let $(A_k)$ be a sequence in $\mathscr{G}$ such that $A_\ast=\sup_k A_k$. 
Define $B_1=A_1$ and $B_k=A_k\cap (B_1\cup \ldots \cup B_{k-1})^c$ for $k\geq 2$.  
Then $(B_k)$ is a partition of $A_\ast$, and since $\cap_i V_i|B_k\neq \emptyset$ for all $k$, it follows from the stability of the $V_i$'s that $\cap_i V_i|A_\ast \neq \emptyset$. 
Fix $x_0\in X$, and modify each $V_i$ by $W_i=V_i|A_\ast+\{x_0\}|A_\ast^c$. Then $\emptyset\neq\cap_i W_i=:W$ is stable, and 
\begin{equation}\label{eq:intersection}
\sqcap_i V_i|A_i:=W|A_\ast
\end{equation}
is the infimum of $(V_i|A_i)$. 

We provide the construction of the complement. 
For $V|A\in \mathcal{P}(X)$, define 
\begin{equation}\label{eq:complement}
(V|A)^\sqsubset:=\sqcup\{W|B\in\mathcal{P}(X)\colon W|B\sqcap V|A=\{\ast\}\}. 
\end{equation}
By completeness of the ordered set $(\P(X),\sqsubseteq)$, $(V|A)^\sqsubset$ is well-defined. 
\end{proof}
The symbols $\sqcup, \sqcap$ and ${}^\sqsubset$, defined in \eqref{eq:union}, \eqref{eq:intersection} and \eqref{eq:complement}, are called \emph{conditional union, conditional intersection} and \emph{conditional complement}, respectively. 
We give another description of the conditional complement: 
\begin{equation}\label{eq:repcomplement}
(V|A)^\sqsubset=Z|B_\ast + X|A^c, 
\end{equation}
where 
\[
B_\ast=\sup\{A^\prime\in \mathcal{F}\colon A^\prime\subseteq A, V|A^\prime\neq X|A^\prime\} 
\]
and 
\begin{equation*}
Z=
\begin{cases}
\{x\in X\colon x|A^\prime\not\in V|A^\prime \text{ for all } A^\prime\in\F_+ \text{ with }A^\prime\subseteq B_\ast\}, \text{if }B_\ast\in \F_+, \\
X, \text{ else}. 
\end{cases}
\end{equation*}
By an exhaustion argument (similarly to the construction of the conditional intersection in the previous proof), $B_\ast$ is attained. 
It can directly be verified that $Z$ is stable. 
To see the equality in \eqref{eq:repcomplement}, notice that $(Z|B_\ast + X|A^c)\sqcap V|A=\{\ast\}$, and therefore $Z|B_\ast + X|A^c\sqsubseteq (V|A)^\sqsubset$. 
By way of contradiction, suppose that $(Z|B_\ast + X|A^c) \sqcap (V|A)^\sqsubset \neq \{\ast\}$. We may assume that $B_\ast\in \F_+$. 
Since $(V|A)^\sqsubset|A^c=X|A^c$, there must exist $C\in \F_+$ with $C\subseteq B_\ast$ and $x\in X$ such that $x|C\in (V|A)^\sqsubset$ and $x|C^\prime\not\in V|C^\prime$ for all $C^\prime\subseteq C$ with $C^\prime\in \F_+$. However, this contradicts the maximality of $B_\ast$. 

As a consequence of elementary results in the theory of Boolean algebras, see e.g.~\cite[p.~14]{monk1989handbook}, one has: 
\begin{corollary}
    For a stable set $X$, the structure $(\mathcal{P}(X),\sqcup,\sqcap,{}^\sqsubset, \{\ast\},X)$ is a complete Boolean algebra.
\end{corollary}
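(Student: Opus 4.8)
The plan is to obtain the statement as a purely formal consequence of Theorem~\ref{t:mainthm} and the textbook equivalence between the order-theoretic and the algebraic axiomatizations of Boolean algebras, together with the explicit description of the complement already recorded in \eqref{eq:complement}--\eqref{eq:repcomplement}. By Theorem~\ref{t:mainthm}, $(\mathcal{P}(X),\sqsubseteq)$ is a complete distributive lattice with least element $\{\ast\}$, greatest element $X$, arbitrary joins $\sqcup_i$ and meets $\sqcap_i$, and in which every element is complemented. The first thing I would note is that in a distributive lattice complements are unique: if $W_1|B_1$ and $W_2|B_2$ are both complements of $V|A$, then distributivity gives $W_1|B_1=W_1|B_1\sqcap(V|A\sqcup W_2|B_2)=(W_1|B_1\sqcap V|A)\sqcup(W_1|B_1\sqcap W_2|B_2)=W_1|B_1\sqcap W_2|B_2$, and by symmetry $W_2|B_2=W_1|B_1\sqcap W_2|B_2$, so the two agree. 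Hence $V|A\mapsto(V|A)^{\sqsubset}$ from \eqref{eq:complement} is a well-defined unary operation, it is the unique complementation, and it automatically satisfies the De Morgan laws and is involutive.

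Next I would invoke the elementary fact, as in \cite[p.~14]{monk1989handbook}, that a bounded distributive lattice equipped with a complementation satisfies precisely the Boolean algebra axioms: commutativity, associativity and absorption of $\sqcup,\sqcap$ come from the lattice structure, the identity laws $V|A\sqcap X=V|A$ and $V|A\sqcup\{\ast\}=V|A$ from boundedness, distributivity from Theorem~\ref{t:mainthm}, and the complement laws $V|A\sqcap(V|A)^{\sqsubset}=\{\ast\}$ and $V|A\sqcup(V|A)^{\sqsubset}=X$ from the fact that $(V|A)^{\sqsubset}$ is a lattice complement of $V|A$. The only point here that is specific to the present construction is the verification of these last two identities for the concrete operation \eqref{eq:complement}; this is contained in Theorem~\ref{t:mainthm} and made transparent by the representation \eqref{eq:repcomplement}: the meet identity $(Z|B_\ast+X|A^c)\sqcap V|A=\{\ast\}$ is immediate, while the join identity $(Z|B_\ast+X|A^c)\sqcup V|A=X$ is read off by splitting over $\Omega$: on $A^c$ the summand $X|A^c$ already exhausts $X$; on $A\cap B_\ast^c$ one has $V|(A\cap B_\ast^c)=X|(A\cap B_\ast^c)$ by maximality of $B_\ast$; and on $B_\ast$ an arbitrary $x\in X$ decomposes, via an exhaustion along the largest set on which its restriction lies in $V$, into a piece in $V$ and a complementary piece in $Z$, using stability of $V$ and $Z$.

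Finally, completeness of the Boolean algebra is inherited for free: the joins $\sqcup_i$ and meets $\sqcap_i$ furnished by Theorem~\ref{t:mainthm} are, by their very construction, the least upper bounds and greatest lower bounds of arbitrary nonempty families in $(\mathcal{P}(X),\sqsubseteq)$, and adjoining $\{\ast\}$ and $X$ handles the empty family. I expect the main obstacle to be nothing more than the careful bookkeeping behind the join identity $V|A\sqcup(V|A)^{\sqsubset}=X$ via \eqref{eq:repcomplement}; everything else is a mechanical translation of standard Boolean-algebra facts. Once that identity is in hand, the corollary follows from the cited elementary results.
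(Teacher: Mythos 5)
Your proposal is correct and follows essentially the same route as the paper, which obtains the corollary directly from Theorem~\ref{t:mainthm} together with the elementary fact (cited from \cite[p.~14]{monk1989handbook}) that a complete complemented distributive lattice is a complete Boolean algebra. The extra details you supply --- uniqueness of complements in a distributive lattice and the verification of $V|A\sqcap(V|A)^{\sqsubset}=\{\ast\}$ and $V|A\sqcup(V|A)^{\sqsubset}=X$ via \eqref{eq:repcomplement} --- merely make explicit what the paper delegates to Theorem~\ref{t:mainthm} and the citation.
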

The importance of the previous result lies in the validity of all Boolean laws in $\P(X)$ which are known from the classical power set algebra. The Boolean laws are fundamental in topology and measure theory. 
The application of the Boolean laws is usually referred to as Boolean arithmetic, see e.g.~\cite{monk1989handbook}. 
Compared with the usual power set, the conditional power set has additionally the property that its elements can be concatenated which 
 brings some new properties which are specific to the conditional power set and are summarized in the following proposition.  
\begin{proposition}\label{p:elementaryprop}
Let $X$ be a stable set, $V|A\in\mathcal{P}(X)$, $(V_i|A_i)_{i\in I}$, $(V_k|A_k)_{k\in \N}$ and $(V_{k,j}|A_{k,j})_{k\in \N, j\in J}$ be families in $\mathcal{P}(X)$, where $I$ and $J$ are arbitrary  nonempty  index sets. Let $(D_k)$ be a partition of $\Omega$. Then the following are true. 
\begin{itemize}
\item[(S1)] $\sqcap_i (V_i|A_i)= (\sqcap_i V_i)|\inf_i A_i$ and $\sqcup_i (V_i|A_i) = (\sqcup_i V_i)|\sup_i A_i$. 
\item[(S2)] $(\sum (V_{k}|A_k)|D_k)\sqcap V|A=\sum (V_{k}|A_k\sqcap V|A)|D_k$ and $(\sum (V_{k}|A_k)|D_k)\sqcup V|A=\sum (V_{k}|A_k\sqcup V|A)|D_k$.  
\item[(S3)] $\sum (\sqcap_j V_{k,j}|A_{k,j})|D_k=\sqcap_j \sum (V_{k,j}|A_{k,j})|D_k$ and $\sum (\sqcup_j V_{k,j}|A_{k,j})|D_k=\sqcup_j \sum (V_{k,j}|A_{k,j})|D_k$.
\item[(S4)] $(\sum (V_{k}|A_{k})| D_k)^\sqsubset=\sum (V_{k}|A_{k})^\sqsubset|D_k$.  
\end{itemize}
\end{proposition}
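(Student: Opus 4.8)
The plan is to handle (S1) by a direct rewriting of the explicit descriptions \eqref{eq:union} and \eqref{eq:intersection} of $\sqcup$ and $\sqcap$, and to reduce (S2)--(S4) to a single principle: that equalities in $\mathcal P(X)$ can be checked ``locally along the partition $(D_k)$''. For (S1), one reads off from \eqref{eq:intersection} that $\sqcap_i(V_i|A_i)$ is supported on $\sup\{B\in\F\colon B\subseteq\inf_i A_i,\ \cap_i V_i|B\neq\emptyset\}$ with stable part $\cap_i V_i$ restricted there, and from \eqref{eq:union} that $\sqcup_i(V_i|A_i)$ is supported on $\sup_i A_i$; matching these with $(\sqcap_i V_i)|\inf_i A_i$ and $(\sqcup_i V_i)|\sup_i A_i$ is a short computation whose essential ingredient is the infinite distributive law $(\sup_\alpha B_\alpha)\wedge C=\sup_\alpha(B_\alpha\wedge C)$, valid in any complete Boolean algebra and in particular in the measure algebra, together with the remark that restricting a member of $\cap_i V_i|B$ to $B\wedge C$ again lands in $\cap_i V_i|(B\wedge C)$.

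For (S2)--(S4) I would set up the following machinery attached to the partition $(D_k)$. Define $\rho_k\colon\mathcal P(X)\to\mathcal P(X)$ by $\rho_k(U|C):=U|(C\cap D_k)$, and prove the key lemma that this is \emph{relativization}: $\rho_k(Y)=Y\sqcap(X|D_k)$ for every $Y\in\mathcal P(X)$. Indeed, for $B\subseteq C\cap D_k$ one has $(U|C)|B=U|B$ and $(X|D_k)|B=X|B$, so the supremum in the formula \eqref{eq:intersection} for $(U|C)\sqcap(X|D_k)$ equals $C\cap D_k$, with stable part $U|(C\cap D_k)$. Since $\mathcal P(X)$ is a complete Boolean algebra, the distributive law $Y\sqcap\sqcup_\alpha Z_\alpha=\sqcup_\alpha(Y\sqcap Z_\alpha)$ shows that each $\rho_k$ preserves arbitrary conditional unions; arbitrary conditional intersections are preserved for trivial order-theoretic reasons; and $\rho_k$ carries the conditional complement $Y^\sqsubset$ to the complement of $\rho_k(Y)$ inside the principal ideal $[\{\ast\},X|D_k]$. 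From \eqref{eq:union} and stability of $X$ one has $\sqcup_k(X|D_k)=X$, hence $Y=Y\sqcap X=\sqcup_k\rho_k(Y)$; this is the \emph{localization lemma}: $Y=Y'$ in $\mathcal P(X)$ iff $\rho_k(Y)=\rho_k(Y')$ for all $k$. Finally, unwinding \eqref{eq:stabilitypowerset} and \eqref{eq:setconcatenation} yields the \emph{projection formula} $\rho_l\bigl(\sum_k(W_k|B_k)|D_k\bigr)=\rho_l(W_l|B_l)$ for any family $(W_k|B_k)$, since $\cup_k(B_k\cap D_k)\cap D_l=B_l\cap D_l$ and the summands supported off $D_l$ drop out.

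With this in hand, (S2)--(S4) follow by applying $\rho_l$ to both sides and invoking the localization lemma. For (S2), the homomorphism property together with the projection formula turns the left-hand side into $\rho_l(V_l|A_l)\sqcap\rho_l(V|A)=\rho_l\bigl((V_l|A_l)\sqcap(V|A)\bigr)$ and the right-hand side into the same element, and the $\sqcup$-part is identical with $\sqcup$ replacing $\sqcap$. For (S3), both sides become $\sqcap_j\rho_l(V_{l,j}|A_{l,j})$ (using that $\rho_l$ commutes with arbitrary $\sqcap$ and with projection along $D_l$), and symmetrically with $\sqcup$. For (S4), $\rho_l$ sends the left-hand side $\bigl(\sum_k(V_k|A_k)|D_k\bigr)^\sqsubset$ to the complement of $\rho_l(V_l|A_l)$ in $[\{\ast\},X|D_l]$, which the projection formula also identifies with $\rho_l\bigl((V_l|A_l)^\sqsubset\bigr)$, namely the $\rho_l$-image of the right-hand side. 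I expect the main obstacle to be the first lemma --- verifying that $\rho_k$ coincides with relativization and is therefore a complete Boolean homomorphism onto $[\{\ast\},X|D_k]$ respecting complements --- together with the localization lemma; once these are available (S2)--(S4) are routine, and in (S1) the only delicate point is the exact identification of the supports $\inf_i A_i$ and $\sup_i A_i$, for which the infinite distributive law in the measure algebra is used.
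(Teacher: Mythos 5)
Your treatment of (S2)--(S4) is correct and takes a genuinely different, and arguably cleaner, route than the paper's. The paper proves (S4) by unwinding the explicit representation \eqref{eq:repcomplement} of the conditional complement and dismisses (S2) and (S3) as ``similar''; you instead isolate one structural fact, namely that $\rho_k(U|C)=U|(C\cap D_k)$ coincides with relativization $\,\cdot\,\sqcap(X|D_k)$ and is therefore a complete Boolean homomorphism onto the principal ideal below $X|D_k$ carrying ${}^\sqsubset$ to the relative complement, together with the localization identity $Y=\sqcup_k\rho_k(Y)$. All the verifications you list (the computation of $(U|C)\sqcap(X|D_k)$ from \eqref{eq:intersection}, the infinite distributive law for $\sqcap$ against existing suprema, preservation of infima by meeting with a fixed element, and the projection formula extracted from \eqref{eq:stabilitypowerset}) do go through, and (S2)--(S4) then follow mechanically and uniformly. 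The intersection half of (S1) is also in order: identifying the support as $B_\ast\wedge\inf_i A_i$ via the distributive law and then matching the stable parts (using stability of the $V_i$ to extend an element of $\cap_i(V_i|A_\ast)$ to one of $\cap_i(V_i|B_\ast)$) is the paper's maximality argument in computational form.

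The gap is in the union half of (S1), which you dispose of as ``a short computation''; this is precisely where the real difficulty sits, and the computation does not close. By \eqref{eq:union}, an element of $\sqcup_i(V_i|A_i)$ is a concatenation $\sum x_k|B_k+x_0|A^c$ in which each piece $B_k$ must come with an index $i$ satisfying \emph{simultaneously} $B_k\subseteq A_i$ and $x_k\in V_i$, whereas an element of $(\sqcup_i V_i)|\sup_i A_i$ only requires $x_k\in V_i$ for some $i$, with no constraint tying $B_k$ to $A_i$. The inclusion $\sqcup_i(V_i|A_i)\sqsubseteq(\sqcup_i V_i)|\sup_i A_i$ is the easy direction; the reverse requires re-pairing an arbitrary concatenation with a partition subordinate to the $A_i$, and this fails in general. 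Concretely, take $A_1\in\F_+$ with $A_1^c\in\F_+$, set $A_2:=A_1^c$, $V_1:=\{v\}$, $V_2:=\{w\}$ with $v\neq w$ a.e.; then $\sqcup_i(V_i|A_i)=\{v|A_1+w|A_2\}|\Omega$, while $(\sqcup_i V_i)|\Omega=\mathrm{st}(\{v,w\})$ contains $v$ itself, so the right-hand side is strictly larger. (The paper's own argument for this half has the same defect: the step asserting $\sum x_{h,k}|(B_h\cap A_{i_k})\in W|A$ tacitly assumes $x_h\in V_{i_k}$ whenever $B_h\cap A_{i_k}\in\F_+$, which is not given.) So for this half you must either add a hypothesis under which the re-pairing is possible (e.g.\ all $A_i=\Omega$, the only case in which the identity is invoked later), or replace the right-hand side by a correctly paired object; the distributive law in the measure algebra alone will not produce the missing inclusion.
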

\begin{proof}
We prove (S1) and (S4); the remaining two claims can be shown similarly by using the definitions of the conditional set operations.  
\begin{itemize}
\item[(S1)] We show $\sqcap_i (V_i|A_i)= (\sqcap_i V_i)|\inf_i A_i$. Suppose $\sqcap_i (V_i|A_i)=W|A_\ast$ and $\sqcap_i V_i=W^\prime|B_\ast$ according to \eqref{eq:intersection}, and put $C_\ast=B_\ast\cap \inf_i A_i$. 
If $x|A_\ast\in W|A_\ast$, then $x|A_\ast\in W_i|A_\ast$ for all $i$, and thus $x|A_\ast\in W^\prime|A_\ast\sqsubseteq W^\prime|C_\ast$ since $A_\ast\subseteq C_\ast$. If now $C_\ast\setminus A_\ast\in\F_+$, then this contradicts the maximality of $A_\ast$, which implies that $W^\prime|C_\ast \sqsubseteq W|A_\ast$. 

We show $\sqcup_i (V_i|A_i)= (\sqcup_i V_i)|A$, where $A=\sup_i A_i$. Suppose $\sqcup_i (V_i|A_i)=W|A$ and $\sqcup V_i=W^\prime$ according to $\eqref{eq:union}$. 
By stability, it follows that $W|A\subseteq W^\prime|A$. 
As for the converse, let $(A_{i_k})$ be a countable subfamily of $(A_i)$ such that $\sup_k A_{i_k}=A$ and $\sum x_h|B_h\in W^\prime$ for some partition $(B_h)$ of $\Omega$ and $x_h\in X_{i}$ for some $i$ and all $h$. 
Then $(B_h\cap A_{i_k})_{h,k}$ is a partition of $A$, and extending the family $(x_h)$ to a family $(x_{h,k})$ by $x_{h,k}:=x_h$ for all $h,k$ we obtain $(\sum x_h|B_h)|A=\sum x_{h,k}|B_h\in W|A$, which shows that $W^\prime|A\subseteq W|A$. 
\item[(S4)] According to \eqref{eq:repcomplement}, let $(V_k|A_k)^\sqsubset=U_k|B_{k,\ast}+ X|A_k^c$ for each $k$, and 
\[
(\sum (V_k|A_k)|D_k)^\sqsubset=((\sum V_k|D_k)|A)^\sqsubset=U|B_\ast + X|A^c, 
\]
where $A=\cup_k (A_k\cap D_k)$ and the first equality follows from \eqref{eq:stabilitypowerset}. 
On the one hand, we have 
\[
(\sum (U_k|B_{k,\ast}+ X|A_k^c)|D_k)|\cup_k (A^c_k\cap D_k)=X|A^c. 
\]
On the other hand, $U|B_\ast\cap (A_k\cap D_k)=U_k|B_{k,\ast}\cap D_k$ for all $k$, which implies 
\[
U|B_\ast=(\sum U_k|D_k)|\cup_k (B_{k,\ast}\cap D_k). 
\]
\end{itemize}
\end{proof}
\begin{remark}\label{r:exhaustion}
In many concepts and proofs in conditional set theory, it is necessary to construct the largest set $A\in \F$ on which a property is satisfied. 
For example see the definition of the conditional intersection \eqref{eq:intersection} and the representation of the conditional complement \eqref{eq:repcomplement}. 
This can be achieved by an \emph{exhaustion argument} which requires that the considered property is stable under countable concatenations which follows from the stability of sets and relations. 
Without further notice, if we write ``by an exhaustion argument'', then we mean that an exhaustion can is applied. 
\end{remark}
Finally, we recall the definition of a stable function, cf.~\cite[Definition 2.17]{DJKK13}. 
\begin{definition}
Let $X$ and $Y$ be stable sets. A function $f\colon X\to Y$ is said to be a \emph{stable function} if $f(\sum x_k|A_k)=\sum f(x_k)|A_k$ for all partitions $(A_k)$ and every sequence $(x_k)$ in $X$. 
Let $W|A\sqsubseteq Y$ and put
\[
C_\ast:=\sup\{A^\prime\in\mathscr{F}\colon A^\prime\subseteq A, \text{ there is }x\in X \text{ such that } f(x)|A^\prime\in W|A^\prime\}. 
\]
By an exhaustion argument,  $C_\ast$ is attained. 
Let 
\[
V:=\{x\in X\colon f(x)|C_\ast\in W|C_\ast\}. 
\]
Define the \emph{conditional pre-image} of $W|A$ as 
\begin{equation}\label{eq:preimage}
f^{-1}(W|A):=V|C_\ast. 
\end{equation}
For a sequence of stable functions $f_k\colon X\to Y$ and a partition $(A_k)$, define their concatenation by 
\begin{equation}\label{eq:concatenationfct}
x\mapsto (\sum f_k|A_k)(x):=\sum f_k(x)|A_k, 
\end{equation}
which is a stable function from $X$ to $Y$.  
\end{definition}

\section{A conditional version of Carath\'eodory's extension theorem and the Lebesgue integral}\label{s:measure}

The aim of this section is to develop basic results of measure theory in a conditional context.  
Our focus lies on Carath\'eodory's extension and uniqueness theorems and the construction of a Lebesgue integral. 
\begin{definition}
Let $X$ be a stable set of functions on $\Omega$. 
A stable collection $\X$ on $X$ is called a
\begin{itemize}
 \item \emph{stable ring} whenever $V|A\sqcap (W|B)^\sqsubset,V|A\sqcup W|B\in \mathcal{X}$ for all $V|A,W|B\in \mathcal{X}$;
 \item \emph{stable Dynkin system} whenever $X\in \mathcal{X}$, $(V|A)^\sqsubset\in \mathcal{X}$ for all $V|A\in \mathcal{X}$, and $\sqcup_k (V_k|A_k) \in \mathcal{X}$ for all sequences $(V_k|A_k)$ of pairwise disjoint\footnote{$V|A$ and $W|B$ are said to be \emph{disjoint} if $V|A\sqcap W|B=\{\ast\}$.}  elements in $\mathcal{X}$;
 \item \emph{stable $\sigma$-algebra} whenever $X\in \mathcal{X}$, $(V|A)^\sqsubset\in \mathcal{X}$ for all $V|A\in \mathcal{X}$, and $\sqcup_k (V_k|A_k) \in \mathcal{X}$ for all sequences $(V_k|A_k)$ in $\mathcal{X}$. 
\end{itemize}
The pair $(X,\mathcal{X})$, where $\mathcal{X}$ is a stable $\sigma$-algebra, is called a \emph{stable measurable space}. 
If $(Y,\mathcal{Y})$ is another stable measurable space, then a stable function $f\colon X\to Y$ is called \emph{stably measurable} if $f^{-1}(V|A)\in\mathcal{X}$ for all $V|A\in \mathcal{Y}$. 
\end{definition}
\begin{remark}
Since the intersection of a  nonempty  family of stable Dynkin systems (stable $\sigma$-algebras) is a stable Dynkin system (stable $\sigma$-algebra), we can define the stable Dynkin system (stable $\sigma$-algebra)  \emph{generated} by a  collection $\mathcal{E}$ of conditional sets, henceforth denoted by $\D(\mathcal{E})$ ($\Sigma(\mathcal{E})$). 
\end{remark}
\begin{examples}\label{exp:algebra}
\begin{itemize}[fullwidth]
\item[1)] Let $X$ be a stable set of functions on $\Omega$. Then $\{X|A\colon A\in \F\}$ and $\P(X)$ are stable $\sigma$-algebras, called the \emph{trivial} and the \emph{discrete} stable $\sigma$-algebra on $X$. 
\item[2)] Let $L^0(E)$ be the space of all strongly measurable functions with values in a Banach space $E$. The norm $\|\cdot\|$ of $E$ extends to an $L^0$-valued norm on $L^0(E)$ by defining $\|x\|(\omega):=\|x(\omega)\|$ a.s. Let $\mathcal{E}$ be the collection of all \emph{stable open balls} $B_r(x):=\{y\in L^0(E)\colon \|x-y\|< r\}$, $x\in L^0(E)$ and $r\in L^0_{++}$. By straightforward inspection, $\mathcal{E}$ is a stable collection in $\mathcal{P}(L^0(E))$ which is a base for a stable topology on $L^0(E)$, see \cite[Section 3]{DJKK13}. We call the stable $\sigma$-algebra $\Sigma(\mathcal{E})$ the \emph{stable Borel $\sigma$-algebra} on $L^0(E)$, and denote it by $\mathcal{B}(L^0(E))$. 
\end{itemize}
\end{examples}
The following proposition is an immediate consequence of Boolean arithmetic. 
\begin{proposition}\label{cor1}
A stable Dynkin system is a stable $\sigma$-algebra if and only if it is closed under finite conditional intersections. 
\end{proposition}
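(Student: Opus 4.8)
The plan is to follow the classical Dynkin $\pi$--$\lambda$ argument, transported to the conditional setting by invoking the Corollary that $(\P(X),\sqcup,\sqcap,{}^\sqsubset,\{\ast\},X)$ is a complete Boolean algebra; in particular every finitary Boolean identity (De Morgan's laws, distributivity, the telescoping/disjointification identities) is at our disposal. The ``only if'' direction is immediate: if $\X$ is a stable $\sigma$-algebra and $V|A,W|B\in\X$, then by De Morgan $V|A\sqcap W|B=\bigl((V|A)^\sqsubset\sqcup(W|B)^\sqsubset\bigr)^\sqsubset$, and the right-hand side lies in $\X$ since $\X$ is closed under conditional complements and (countable, a fortiori binary) conditional unions.

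For the ``if'' direction, let $\X$ be a stable Dynkin system closed under finite conditional intersections. First I would record that $\X$ is then also closed under finite conditional unions: indeed $\{\ast\}=X^\sqsubset\in\X$, and by De Morgan $V|A\sqcup W|B=\bigl((V|A)^\sqsubset\sqcap(W|B)^\sqsubset\bigr)^\sqsubset$ exhibits a binary conditional union as a conditional complement of a finite conditional intersection of conditional complements, hence as an element of $\X$; iterating, $\X$ contains all finite conditional unions of its members.

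Next, given an arbitrary sequence $(V_k|A_k)$ in $\X$, I would disjointify it inside the Boolean algebra $\P(X)$: put $W_1:=V_1|A_1$ and, for $k\geq 2$,
\[
W_k:=(V_k|A_k)\sqcap\bigl((V_1|A_1)\sqcup\cdots\sqcup(V_{k-1}|A_{k-1})\bigr)^\sqsubset .
\]
By the two preceding steps each $W_k\in\X$; by Boolean arithmetic the $W_k$ are pairwise disjoint and $\sqcup_{k\le n}W_k=\sqcup_{k\le n}(V_k|A_k)$ for every $n$, so $\sqcup_k W_k=\sqcup_k(V_k|A_k)$. Since $\X$ is a stable Dynkin system it is closed under conditional unions of pairwise disjoint sequences, whence $\sqcup_k W_k\in\X$ and therefore $\sqcup_k(V_k|A_k)\in\X$. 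Together with $X\in\X$ and closure under conditional complement, this shows $\X$ is a stable $\sigma$-algebra; closure under countable concatenations is part of being a stable collection and is untouched by the argument. The only point deserving care is that each of the finitely many Boolean manipulations above keeps us inside $\X$ --- which is precisely where the finite-intersection hypothesis is used --- while the disjointification and telescoping identities themselves are instances of Boolean arithmetic, guaranteed by the Corollary and requiring no re-proof in the conditional framework; so I do not expect a genuine obstacle here.
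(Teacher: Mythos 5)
Your proof is correct and is exactly the ``Boolean arithmetic'' the paper has in mind: the paper states the proposition as an immediate consequence of the Boolean algebra structure of $\P(X)$, and your De Morgan plus disjointification argument is the standard way to spell that out. Nothing is missing.
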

The previous results can be used to establish the following Dynkin's $\pi$-$\lambda$ type result.  
\begin{theorem}\label{pilambda}
Let $\mathcal{E}$ be a stable collection which is closed under finite conditional intersections. 
Then $\Sigma(\mathcal{E})=\D(\mathcal{E})$.  
\end{theorem}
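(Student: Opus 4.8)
The plan is to follow the classical proof of Dynkin's $\pi$-$\lambda$ theorem verbatim, replacing ordinary set operations with their conditional counterparts and using the Boolean arithmetic guaranteed by the corollary to Theorem~\ref{t:mainthm} together with the stability properties in Proposition~\ref{p:elementaryprop}. Since $\D(\E)\subseteq\Sigma(\E)$ is automatic (every stable $\sigma$-algebra is a stable Dynkin system, so the smallest stable $\sigma$-algebra containing $\E$ is in particular a stable Dynkin system containing $\E$), it suffices to prove that $\D(\E)$ is a stable $\sigma$-algebra; by Proposition~\ref{cor1} this reduces to showing that $\D(\E)$ is closed under finite conditional intersections.

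First I would fix $W|A\in\D(\E)$ and consider the collection
\[
\mathcal{D}_{W|A}:=\{\,V|B\in\D(\E)\colon V|B\sqcap W|A\in\D(\E)\,\}.
\]
The core computation is that $\mathcal{D}_{W|A}$ is itself a stable Dynkin system. That it contains $X$ is clear; closure under conditional complements follows from the Boolean identity
\[
(V|B)^\sqsubset\sqcap W|A=\bigl((V|B\sqcap W|A)\sqcup (W|A)^\sqsubset\bigr)^\sqsubset,
\]
valid in the Boolean algebra $\P(X)$, since the right-hand side is built from $V|B\sqcap W|A\in\D(\E)$, $W|A\in\D(\E)$, a disjoint conditional union, and a conditional complement, all operations under which $\D(\E)$ is closed (disjointness of $V|B\sqcap W|A$ and $(W|A)^\sqsubset$ is immediate). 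For countable conditional unions: if $(V_k|B_k)$ are pairwise disjoint in $\mathcal{D}_{W|A}$, then by distributivity of the lattice (Theorem~\ref{t:mainthm}) and (S1) one has $(\sqcup_k V_k|B_k)\sqcap W|A=\sqcup_k(V_k|B_k\sqcap W|A)$, and the conditional sets $V_k|B_k\sqcap W|A$ are again pairwise disjoint and lie in $\D(\E)$, so the union is in $\D(\E)$. I would also need to check that $\mathcal{D}_{W|A}$ is a stable collection, i.e.~closed under countable concatenations; this is where (S2) enters, since $(\sum(V_k|B_k)|D_k)\sqcap W|A=\sum(V_k|B_k\sqcap W|A)|D_k$, and $\D(\E)$ being a stable collection absorbs the concatenation on the right.

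Then comes the standard two-step bootstrap. For $W|A\in\E$: since $\E$ is closed under finite conditional intersections, $\E\subseteq\mathcal{D}_{W|A}$, hence $\D(\E)\subseteq\mathcal{D}_{W|A}$, i.e.~$V|B\sqcap W|A\in\D(\E)$ for all $V|B\in\D(\E)$ and all $W|A\in\E$. This says precisely that $\E\subseteq\mathcal{D}_{V|B}$ for every $V|B\in\D(\E)$; since $\mathcal{D}_{V|B}$ is a stable Dynkin system, $\D(\E)\subseteq\mathcal{D}_{V|B}$, which is exactly closure of $\D(\E)$ under binary (hence finite) conditional intersections. Invoking Proposition~\ref{cor1} finishes the argument.

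The main obstacle I anticipate is purely bookkeeping rather than conceptual: one must make sure that at each step the relevant subcollection is genuinely a \emph{stable} collection (nonempty and closed under countable concatenations in the sense of \eqref{eq:stabilitypowerset}), not merely closed under the Boolean operations, because the definition of stable Dynkin system presupposes ``stable collection''. This is handled uniformly by Proposition~\ref{p:elementaryprop}(S2)--(S3), which let conditional unions, intersections and complements commute with countable concatenations; but it does require care that disjointness of conditional sets is preserved under the reshuffling and that the index set manipulations in the exhaustion-type arguments go through. No new idea beyond the classical proof is needed—the conditional framework has been set up precisely so that Boolean arithmetic and the stability identities make the transcription routine.
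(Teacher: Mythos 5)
Your proposal is correct and follows essentially the same route as the paper: you reduce via Proposition~\ref{cor1} to closure of $\D(\mathcal{E})$ under finite conditional intersections, introduce the auxiliary collection $\mathcal{D}_{W|A}$ (the paper's $\mathcal{F}_{V|A}$, taken in $\P(X)$ rather than inside $\D(\mathcal{E})$, an immaterial difference), verify it is a stable Dynkin system using (S2)/(S3) and Boolean arithmetic, and run the standard two-step bootstrap. The only distinction is that you spell out the Boolean identity and distributivity steps that the paper compresses into ``by Boolean arithmetic,'' which is fine.
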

\begin{proof}
By Corollary \ref{cor1}, it is enough to prove that $\D(\mathcal{E})$ is closed under finite conditional intersections. 
For $V|A\in \D(\mathcal{E})$, let $\mathcal{F}_{V|A}:=\{W|B \in \P(X) \colon V|A \sqcap W|B \in \D(\mathcal{E})\}$. 
By (S3) and the stability of $\D(\mathcal{E})$, it follows that $\mathcal{F}_{V|A}$ is a stable collection. 
By  Boolean arithmetic,  it follows that $\mathcal{F}_{V|A}$ is a stable Dynkin system. 
For every $V|A \in\mathcal{E}$, one has $\mathcal{E}\subseteq \mathcal{F}_{V|A}$ from the assumption, and therefore $\D(\mathcal{E})\subseteq \mathcal{F}_{V|A} $. 
If now $V|A\in \mathcal{E}$ and $W|B\in \D(\mathcal{E})$, then $W|B\in \mathcal{F}_{V|A}$, and thus $V|A\in \mathcal{F}_{W|B}$. 
It follows that $\mathcal{E}\subseteq \mathcal{F}_{W|B}$ and $\D(\mathcal{E})\subseteq \mathcal{F}_{W|B}$ which shows the claim. 
\end{proof}
\begin{remark}
Let $(X,\X)$ and $(Y,\Y)$ be stable measurable spaces, $\mathcal{E}$ a collection in $\P(Y)$ generating $\Y$ and $f\colon X\to Y$ a stable function. 
Then $f$ is  stably measurable  if and only if $f^{-1}(V|A)\in \X$ for all $V|A\in \mathcal{E}$. 
Indeed, by stability of $f$, $\P(X)$ and $\P(Y)$, the collection 
\[
\mathcal{Z}=\{V|A\in \P(Y)\colon f^{-1}(V|A)\in \X\}
\]
is stable. 
One has $Y\in \mathcal{Z}$, and $\mathcal{Z}$ is closed under conditional complementation and countable conditional unions.  
Thus $\Y\subseteq \mathcal{Z}$ if and only if $\mathcal{E}\subseteq \mathcal{Z}$. 
\end{remark}
If $\X$ is a stable collection in $\P(X)$, then a function $\mu \colon \X\to \bar L^0_+$ is said to be a \emph{stable set function} if 
\[
\mu(\sum (V_k|B_k)|A_k)=\sum \mu(V_k|B_k)|A_k
\]
for all sequences $(V_k|B_k)$ in $\X$ and every partition $(A_k)$, where the concatenation on the l.h.s~is defined as in \eqref{eq:stabilitypowerset}. 
\begin{definition}\label{d:measure}
Let $X$ be a stable set  of functions on $\Omega$ and $\mathcal{X}$ a stable ring on $X$. 
A function $\mu:\mathcal{X}\to \bar L^0_+$ is called a \emph{stable pre-measure} whenever $\mu$ is a stable function and 
\begin{itemize}
\item[(M1)] $\mu(V|A)=\mu(V|A)|A + 0|A^c$ for all $V|A\in \mathcal{X}$; 
\item[(M2)] $\mu(\sqcup_k (V_k|A_k))=\sum_{k \geq 1}\mu(V_k|A_k)$ for all sequences $(V_k|A_k)$ of  pairwise disjoint elements in $\P(X)$ with $\sqcup_k (V_k|A_k)\in \mathcal{X}$.\footnote{The infinite series in $\bar L^0_+$ is understood pointwise a.s.}
\end{itemize} 
If the domain of a stable pre-measure $\mu$ is a stable $\sigma$-algebra, then $\mu$ is called a \emph{stable measure} and  the triple $(X,\mathcal{X},\mu)$ a \emph{stable measure space}. 
A stable measure $\mu$ is said to be 
\begin{itemize}
\item \emph{finite} if $\mu(X)<\infty$, and a \emph{stable probability measure} if $\mu(X)=1$; 
\item \emph{$\sigma$-finite} if there exists an increasing sequence $(V_k|A_k)$ in $\mathcal{X}$ with $\sqcup_k (V_k|A_k)=X$ such that $\mu(V_k|A_k)<\infty$ for all $k$.   
\end{itemize}
\end{definition}
One can see property (M1) as the local analogue of $\mu(\emptyset)=0$ for classical measures. 
We give one simple example of a stable measure and will provide further examples in Section \ref{s:kernel}. 
\begin{example}
 Let $(X,\X)$ be a stable measurable space and $x\in X$. 
 The \emph{stable Dirac measure} centered at $x$ is defined by
 \[
  \delta_x(V|A):= 1|B+0|B^c, 
 \]
where $B=\sup \{B^\prime\in \F\colon B^\prime\subseteq A,  x|B^\prime\in V|B^\prime\}$ which is attained by an exhaustion argument. 
\end{example}
Similarly to the classical case, we have the following elementary properties of stable pre-measures. 
\begin{proposition}
Let $X$ be a stable set of functions on $\Omega$, $\X$ a stable ring and $\mu\colon \X\to \bar L^0_+$ a stable function satisfying property (M1) and finite additivity (i.e.~(M2) for finite sequences). 
Then the following are true.  
\begin{itemize}
\item[(M3)] $\mu(V|A\sqcup W|B)+\mu(V|A\sqcap W|B)=\mu(V|A)+\mu(W|B)$ for all $V|A,W|B\in \mathcal{X}$; 
\item[(M4)] $\mu(V|A)\leq \mu(W|B)$ whenever $V|A\sqsubseteq W|B$; 
\item[(M5)] $\mu(W|B\sqcap (V|A)^\sqsubset)=\mu(W|B)-\mu(V|A)$ whenever $V|A\sqsubseteq W|B$ and $\mu(V|A)<\infty$; 
\item[(M6)] if $\mu$ is a stable pre-measure, then $\mu(\sqcup_k V_k|A_k)\leq \sum_k\mu(V_k|A_k)$ for all sequences $(V_k|A_k)$ in $\mathcal{X}$ with $\sqcup_k V_k|A_k\in \mathcal{X}$; 
\item[(M7)] $\mu$ is a stable pre-measure if and only if $\mu(V_k|A_k)\uparrow\mu(\sqcup_k V_k|A_k)$ a.s.~for all increasing sequences $(V_k|A_k)$ in $\mathcal{X}$ with $\sqcup_k V_k|A_k \in \mathcal{X}$; 
\item[(M8)] if $\mu(V|A)<\infty$ for all $V|A\in \mathcal{X}$, then $\mu$ is a stable pre-measure if and only if $\mu(V_k|A_k)\downarrow \mu(\sqcap_k V_k|A_k)$ a.s.~for all decreasing sequences $(V_k|A_k)$ in $\mathcal{X}$ with  $\sqcap_k V_k|A_k \in \mathcal{X}$.  
\end{itemize}
\end{proposition}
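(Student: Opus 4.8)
The plan is to transport the classical proofs verbatim, using Boolean arithmetic in $\P(X)$ — which is legitimate because $(\P(X),\sqcup,\sqcap,{}^\sqsubset,\{\ast\},X)$ is a complete Boolean algebra — while keeping track of base sets and the normalization (M1), and recalling that order relations and infinite series in $\bar L^0_+$ are understood pointwise a.s., so all limiting arguments reduce to the scalar case on each $\omega$. One standing observation is used repeatedly: a stable ring $\X$ is closed not only under $\sqcup$ and under $V|A\sqcap(W|B)^\sqsubset$, but — via the Boolean identity $V|A\sqcap W|B=V|A\sqcap(V|A\sqcap(W|B)^\sqsubset)^\sqsubset$ — also under finite conditional intersections, so every auxiliary conditional set built below stays in $\X$.

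For (M3)--(M5), fix $V|A,W|B\in\X$ and use the disjoint Boolean decompositions $W|B=(W|B\sqcap V|A)\sqcup(W|B\sqcap(V|A)^\sqsubset)$ and $V|A\sqcup W|B=V|A\sqcup(W|B\sqcap(V|A)^\sqsubset)$. Applying finite additivity to both and then adding $\mu(W|B\sqcap V|A)$ to the second equation yields (M3). When $V|A\sqsubseteq W|B$ the first decomposition collapses to $W|B=V|A\sqcup(W|B\sqcap(V|A)^\sqsubset)$, so $\mu(W|B)=\mu(V|A)+\mu(W|B\sqcap(V|A)^\sqsubset)\geq\mu(V|A)$ since $\mu\geq 0$, which is (M4); if in addition $\mu(V|A)<\infty$, subtracting it gives (M5).

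For (M6)--(M8) I would disjointify as in the classical arguments. For (M6), set $W_1=V_1|A_1$ and $W_k=V_k|A_k\sqcap(V_1|A_1\sqcup\dots\sqcup V_{k-1}|A_{k-1})^\sqsubset$; these lie in $\X$, are pairwise disjoint with $\sqcup_k W_k=\sqcup_k V_k|A_k\in\X$ and satisfy $W_k\sqsubseteq V_k|A_k$, so (M2) and (M4) give $\mu(\sqcup_k V_k|A_k)=\sum_k\mu(W_k)\leq\sum_k\mu(V_k|A_k)$. For (M7), if $\mu$ satisfies (M2) and $(V_k|A_k)$ increases, put $W_1=V_1|A_1$, $W_k=V_k|A_k\sqcap(V_{k-1}|A_{k-1})^\sqsubset$; then $V_n|A_n=\sqcup_{k\leq n}W_k$, so finite additivity and (M2) give $\mu(V_n|A_n)=\sum_{k\leq n}\mu(W_k)\uparrow\sum_k\mu(W_k)=\mu(\sqcup_k V_k|A_k)$, with monotonicity of $n\mapsto\mu(V_n|A_n)$ from (M4); conversely, given continuity from below and a disjoint sequence $(U_k|B_k)$ with union in $\X$, apply it to $V_n:=\sqcup_{k\leq n}U_k|B_k\in\X$ and invoke finite additivity to recover (M2). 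For (M8), recall $\mu(\{\ast\})=0$ by (M1). If $\mu$ is a finite pre-measure and $(V_k|A_k)$ decreases to $\sqcap_k V_k|A_k\in\X$, put $U_k=V_1|A_1\sqcap(V_k|A_k)^\sqsubset$; then $(U_k)$ increases to $V_1|A_1\sqcap(\sqcap_k V_k|A_k)^\sqsubset\in\X$, so (M7) together with (M5) — permissible since $\mu$ is finite — gives $\mu(V_1|A_1)-\mu(V_k|A_k)\uparrow\mu(V_1|A_1)-\mu(\sqcap_k V_k|A_k)$, and cancelling the finite term $\mu(V_1|A_1)$ yields $\mu(V_k|A_k)\downarrow\mu(\sqcap_k V_k|A_k)$. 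Conversely, given continuity from above and an increasing $(V_k|A_k)$ with $\sqcup_k V_k|A_k\in\X$, put $U_k=(\sqcup_j V_j|A_j)\sqcap(V_k|A_k)^\sqsubset$; then $(U_k)$ decreases to $\{\ast\}$, hence $\mu(\sqcup_j V_j|A_j)-\mu(V_k|A_k)=\mu(U_k)\downarrow 0$, which is continuity from below, so by (M7) $\mu$ is a pre-measure.

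Since these are the classical proofs run through a Boolean algebra, I expect no conceptual obstacle; the only points demanding care are the bookkeeping — checking that each truncation, finite union, disjointified piece, and limiting set $\sqcap_k$ really belongs to the ring $\X$ (immediate from the ring axioms, and for the $\sqcap_k$-terms exactly the standing hypothesis ``$\sqcap_k V_k|A_k\in\X$'') — and ensuring that every cancellation of a $\mu$-value is performed only where the relevant finiteness assumption ($\mu(V|A)<\infty$ in (M5), $\mu$ finite on $\X$ in (M8)) guarantees it is a.s.\ finite. With that in place, the remaining order and series manipulations in $\bar L^0_+$ are pointwise-a.s.\ statements about nonnegative reals and follow from the scalar monotone-limit facts applied $\omega$ by $\omega$.
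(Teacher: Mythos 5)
Your proof is correct and follows essentially the same route as the paper, which likewise reduces each item to finite additivity, (M1) and Boolean arithmetic via the standard disjointification arguments. The only divergence is cosmetic: in (M3) you add $\mu(V|A\sqcap W|B)$ to the identity for $\mu(V|A\sqcup W|B)$ and substitute, which sidesteps the case distinction on the set $\{\mu(W|B\sqcap (V|A)^\sqsubset)<\infty\}$ that the paper makes before subtracting that term in $\bar L^0_+$.
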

\begin{proof}
Since the verification of the statements is similar to the proof of the respective classical statements, we only provide the main arguments. 
\begin{itemize}
\item[(M3)] We have 
\begin{align*}
\mu(V|A\sqcup W|B)&=\mu(V|A)+\mu(W|B\sqcap (V|A)^\sqsubset),\\
\mu(W|B)&=\mu(V|A\sqcap W|B)+\mu(W|B\sqcap (V|A)^\sqsubset). 
\end{align*}
Let $C=\{\mu(W|B\sqcap (V|A)^\sqsubset)<\infty\}$. On $C$ we get (M3) by adding the previous two identities and subtracting $\mu(W|B\sqcap (V|A)^\sqsubset)$. 
On $C^c$ we have $\mu(V|A\sqcup W|B)=\mu(W|B)=\infty$ which also implies (M3). 
\item[(M4)] and (M5) follow from finite additivity. 
\item[(M6)] can be shown by finite additivity, (M4) and Boolean arithmetic. 
\item[(M7)] follows from finite additivity and Boolean arithmetic. 
\item[(M8)] follows from (M7) on applying (M5). 
\end{itemize}
\end{proof}
\begin{definition}
Let $X$ be a stable set of functions on $\Omega$. 
A stable function $\mu^\ast\colon\P(X)\to \bar L^0_+$ is called a \emph{stable outer measure} whenever 
\begin{itemize}
\item[(A1)] $\mu^\ast((V|A)|B)=\mu^\ast(V|A)|B+0|B^c$ for all $V|A\in \P(X)$ and $B\in\F$; 
\item[(A2)] $\mu^\ast(V|A)\leq \mu^\ast(W|B)$ whenever $V|A\sqsubseteq W|B$; 
\item[(A3)] $\mu^\ast(\sqcup_k (V_k|A_k))\leq\sum_k\mu^\ast(V_k|A_k)$ for all sequences $(V_k|A_k)$ in $\P(X)$.  
\end{itemize} 
An element $V|A\in \P(X)$ is said to be \emph{stably  $\mu^\ast$-measurable} whenever  
\begin{equation}\label{eq:measurable}
\mu^\ast(W|B\sqcap V|A) + \mu^\ast(W|B\sqcap (V|A)^\sqsubset)= \mu^\ast(W|B)\quad \text{ for all } W|B\in\P(X). 
 \end{equation} 
\end{definition}
A stable measure can be obtained from a stable outer measure as follows. 
\begin{proposition}\label{p:outermeas}
Let $X$ be a stable set of functions on $\Omega$ and $\mu^\ast\colon \P(X)\to \bar L^0_+$ a stable outer measure. 
Then $\mu^\ast$ is a stable measure on the stable collection $\X(\mu^\ast)$ of all stably $\mu^\ast$-measurable sets.  
\end{proposition}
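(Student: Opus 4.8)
The plan is to run the classical Carath\'eodory argument inside $\P(X)$, systematically replacing unions, intersections and complements by $\sqcup$, $\sqcap$, ${}^\sqsubset$, replacing countable sums of reals by the pointwise-a.s.~series in $\bar L^0_+$, and invoking Boolean arithmetic together with the conditional identities (S1)--(S4) whenever local supports have to be carried along. There are two things to establish: that $\X(\mu^\ast)$ is a stable $\sigma$-algebra, and that $\mu^\ast$ restricted to it satisfies (M1) and (M2); by Definition \ref{d:measure} this is precisely what ``stable measure'' means.

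First I would settle the elementary structure of $\X(\mu^\ast)$. Since $\mu^\ast(\{\ast\})=0$ by (A1) with $B=\emptyset$, testing \eqref{eq:measurable} with $V|A=X$ is trivial, so $X,\{\ast\}\in\X(\mu^\ast)$; and closure under ${}^\sqsubset$ is immediate from the symmetry of \eqref{eq:measurable} in $V|A$ and $(V|A)^\sqsubset$ together with $((V|A)^\sqsubset)^\sqsubset=V|A$. The first substantial step is closure under finite conditional unions: for $V_1|A_1,V_2|A_2\in\X(\mu^\ast)$ and an arbitrary test set $W|B$, apply \eqref{eq:measurable} for $V_1|A_1$ to $W|B$, then \eqref{eq:measurable} for $V_2|A_2$ to $W|B\sqcap(V_1|A_1)^\sqsubset$, and use the Boolean identity $W|B\sqcap(V_1|A_1\sqcup V_2|A_2)=(W|B\sqcap V_1|A_1)\sqcup(W|B\sqcap(V_1|A_1)^\sqsubset\sqcap V_2|A_2)$ with subadditivity (A3) to bound $\mu^\ast(W|B\sqcap(V_1|A_1\sqcup V_2|A_2))$ from above; adding the pieces gives ``$\leq$'' in \eqref{eq:measurable} for $V_1|A_1\sqcup V_2|A_2$, and ``$\geq$'' is again (A3). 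Hence $\X(\mu^\ast)$ is closed under finite conditional unions and, by De Morgan, under finite conditional intersections, so it is at least a stable ring; moreover, applying \eqref{eq:measurable} for $V_1|A_1$ at the test set $W|B\sqcap(V_1|A_1\sqcup V_2|A_2)$ and simplifying with $(V_1|A_1\sqcup V_2|A_2)\sqcap V_1|A_1=V_1|A_1$ and, when $V_1|A_1\sqcap V_2|A_2=\{\ast\}$, $(V_1|A_1\sqcup V_2|A_2)\sqcap(V_1|A_1)^\sqsubset=V_2|A_2$, one gets by induction finite additivity of $\mu^\ast$ along disjoint families in $\X(\mu^\ast)$.

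Next I would treat countable conditional unions and (M2) simultaneously. For a sequence $(V_k|A_k)$ in $\X(\mu^\ast)$, disjointify via $V_k':=V_k|A_k\sqcap(\sqcup_{j<k}V_j|A_j)^\sqsubset\in\X(\mu^\ast)$, so that $F:=\sqcup_kV_k|A_k=\sqcup_kV_k'$ is a union of pairwise disjoint members of $\X(\mu^\ast)$; writing $F_n:=\sqcup_{k\leq n}V_k'$, testing \eqref{eq:measurable} for $F_n$ at $W|B$ and using the previous step and monotonicity (A2) (note $F_n^\sqsubset\sqsupseteq F^\sqsubset$) gives $\mu^\ast(W|B)\geq\sum_{k\leq n}\mu^\ast(W|B\sqcap V_k')+\mu^\ast(W|B\sqcap F^\sqsubset)$. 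Letting $n\to\infty$ --- the partial sums increase in $\bar L^0_+$, hence have a pointwise-a.s.~supremum --- and then applying (A3) together with the distributive law $W|B\sqcap F=\sqcup_k(W|B\sqcap V_k')$, which holds in any complete Boolean algebra, one obtains $\mu^\ast(W|B)\geq\mu^\ast(W|B\sqcap F)+\mu^\ast(W|B\sqcap F^\sqsubset)\geq\mu^\ast(W|B)$; so equality holds throughout, $F\in\X(\mu^\ast)$, and choosing $W|B=F$ collapses the chain to $\mu^\ast(F)=\sum_k\mu^\ast(V_k')$, which for an originally disjoint family is exactly (M2). It then remains to check that $\X(\mu^\ast)$ is a \emph{stable} collection: it is nonempty (it contains $X$), and for $(V_k|A_k)$ in $\X(\mu^\ast)$ and a partition $(D_k)$, one writes a test set as the trivial concatenation $W|B=\sum(W|B)|D_k$, uses (S2) and (S4) to distribute $\sqcap$ and ${}^\sqsubset$ over $\sum(V_k|A_k)|D_k$, and then --- since $\mu^\ast$ is a stable function --- the left-hand side of \eqref{eq:measurable} for $\sum(V_k|A_k)|D_k$ becomes $\sum[\mu^\ast(W|B\sqcap V_k|A_k)+\mu^\ast(W|B\sqcap(V_k|A_k)^\sqsubset)]|D_k=\sum\mu^\ast(W|B)|D_k=\mu^\ast(W|B)$; hence $\sum(V_k|A_k)|D_k\in\X(\mu^\ast)$ and $\X(\mu^\ast)$ is a stable $\sigma$-algebra. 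Finally $\mu^\ast|_{\X(\mu^\ast)}$ is a stable function (restriction of a stable function to a sub-collection closed under countable concatenations), it satisfies (M1) because (A1) with $B=A$ and $(V|A)|A=V|A$ give $\mu^\ast(V|A)=\mu^\ast(V|A)|A+0|A^c$, and it satisfies (M2) by the computation above; thus it is a stable pre-measure on a stable $\sigma$-algebra, i.e.~a stable measure.

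The part I expect to be the main obstacle is the finite-union step, which demands exactly the right Boolean rearrangement fed into (A2)--(A3), and --- pervasively --- the careful bookkeeping of the local supports $A$ inside each $V|A$; here the identities (S1)--(S4), the convention that all $\bar L^0_+$-valued (in)equalities are meant pointwise a.s., and a case-split on sets of the form $\{\mu^\ast(\cdot)<\infty\}$ (to make subtractions legitimate, as in the proof of (M3) above) are what keep everything local. By contrast the limit $n\to\infty$ in the countable-union step is harmless, since monotone sequences in $\bar L^0_+$ converge pointwise a.s.
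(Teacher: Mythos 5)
Your proof is correct and follows essentially the same route as the paper: the classical Carath\'eodory argument transcribed into $\P(X)$, with the measurability identity tested at suitably intersected sets for finite unions, (A2)--(A3) for the passage to countable disjoint unions, and stability of $\mu^\ast$ plus (S2)/(S4) for closure under concatenations. The only cosmetic difference is that you disjointify explicitly to get arbitrary countable conditional unions, where the paper handles disjoint sequences and then invokes its conditional $\pi$-$\lambda$ theorem.
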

 \begin{proof}
 From the stability of $\mu^\ast$ and (S2), we obtain that $\mathcal{X}(\mu^\ast)$ is a stable collection. 
Clearly, $X\in \X(\mu^\ast)$, and by symmetry, $\X(\mu^\ast)$ is closed under conditional complementation. 
Let $V|A, W|B\in \X(\mu^\ast)$. 
 Then 
 \begin{equation}\label{eq0}
  \mu^\ast(Z|C)=\mu^\ast(Z|C\sqcap W|B )+\mu^\ast(Z|C\sqcap (W|B)^\sqsubset) \quad \text{ for all }Z|C\in \P(X).  
 \end{equation}
 Replacing $Z|C$ by $Z|C\sqcap  V|A$ and $Z|C\sqcap  (V|A)^\sqsubset$ in \eqref{eq0}, respectively, and taking the sum of the two resulting equations, we obtain 
 \begin{align}\label{eq00}
  \notag \mu^\ast(Z|C)&=\mu^\ast(Z|C\sqcap V|A\sqcap W|B )+\mu^\ast(Z|C\sqcap  V|A\sqcap (W|B)^\sqsubset)\\
  &+\mu^\ast(Z|C\sqcap (V|A)^\sqsubset\sqcap W|B )+\mu^\ast(Z|C\sqcap (V|A)^\sqsubset\sqcap (W|B)^\sqsubset). 
 \end{align}
 Replace $Z|C$ by $Z|C\sqcap( V|A\sqcup W|B )$ in \eqref{eq00}, and get from Boolean arithmetic
 \begin{equation}\label{eq01}
 \mu^\ast(Z|C\sqcap( V|A\sqcup W|B ))=\mu^\ast(Z|C\sqcap V|A\sqcap W|B )+\mu^\ast(Z|C\sqcap V|A\sqcap W|B ^\sqsubset)+\mu^\ast(Z|C\sqcap V|A^\sqsubset\sqcap W|B ).
 \end{equation}
 Plugging in \eqref{eq01} in \eqref{eq00}, one has $V|A\sqcup W|B \in\mathcal{X}(\mu^\ast)$. 
 By de Morgan's law, $\X(\mu^\ast)$ is also closed under finite intersections. 
 Let $( U_k|D_k )$ be a sequence of pairwise disjoint sets in $\mathcal{X}(\mu^\ast)$ and  $U|D=\sqcup_k U_k|D_k $.  
 Choosing $ V|A=U_1|D_1$ and $W|B =U_2|D_2$ in \eqref{eq01}, one gets  
 $$
 \mu^\ast(Z|C\sqcap(U_1|D_1\sqcup U_2|D_2))=\mu^\ast(Z|C \sqcap U_1|D_1)+\mu^\ast(Z|C \sqcap U_2|D_2) \quad \text{ for all } Z|C\in\P(X). 
 $$
 By induction, 
\begin{equation}\label{eq:000}
 \mu^\ast(Z|C\sqcap (\sqcup_{k\leq n}(U_k|D_k)))=\sum_{k\leq n}\mu^\ast(Z|C\sqcap U_k|D_k) \quad \text{for all }Z|C\in \P(X). 
 \end{equation}
 From the previous we know that $Y_n|E_n=\sqcup_{k\leq n}(U_k|D_k)\in \X(\mu^\ast)$.  
 By (A2), $Z|C\sqcap (U|D)^\sqsubset\sqsubseteq Z|C\sqcap (Y_n|E_n)^\sqsubset$ implies $\mu^\ast(Z|C\sqcap (U|D)^\sqsubset)\leq \mu^\ast(Z|C \sqcap  (Y_n|E_n)^\sqsubset)$ for all $Z|C\in\P(X)$.  
 Therefore, we obtain from \eqref{eq:000}
 $$
 \mu^\ast(Z|C)=\mu^\ast(Z|C \sqcap Y_n|E_n)+\mu^\ast(Z|C\sqcap(Y_n|E_n)^\sqsubset)\geq \sum_{k \leq n}\mu^\ast(Z|C \sqcap U_k|D_k)+\mu^\ast(Z|X\sqcap (U|D)^\sqsubset) 
 $$
 for all $Z|C\in \P(X)$ and $n\in \N$. 
 By (A3), 
 \begin{equation}\label{eq02}
 \mu^\ast(Z|C)\geq \sum_{k\geq 1}\mu^\ast(Z|C\sqcap U_k|D_k )+\mu^\ast(Z|C\sqcap (U|D)^\sqsubset)\geq \mu^\ast(Z|C\sqcap U|D)+\mu^\ast(Z|C \sqcap (U|D)^\sqsubset) 
 \end{equation}
 for all $Z|C \in \P(X)$,  which means $U|D\in\mathcal{X}(\mu^\ast)$. 
 By Theorem \ref{pilambda}, $\mathcal{X}(\mu^\ast)$ is a stable $\sigma$-algebra. 
To see that $\mu^\ast:\mathcal{X}(\mu^\ast)\to \bar L^0_+$ is a stable measure, replace $Z|C$ by $U|D$ in \eqref{eq02} and note that the reverse inequality follows from (A3). 
 \end{proof}
This leads us to a conditional version of Carath\'eodory's extension theorem. 
\begin{theorem}\label{t:caratheodory}
Let $X$ be a stable set of functions on $\Omega$, $\mathcal{X}$ be a stable ring on $X$ and $\mu:\mathcal{X}\to\bar L^0_+$ be a stable pre-measure. 
Then there exists a stable measure $\nu\colon\Sigma(\mathcal{X})\to \bar L^0_+$ which coincides with $\mu$ on $\mathcal{X}$. 
\end{theorem}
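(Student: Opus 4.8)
The plan is to transcribe the classical Carath\'eodory construction, the only genuinely conditional point being that the outer measure built from $\mu$ must be shown to be a \emph{stable} $\bar L^0_+$-valued function; once this is established, Proposition \ref{p:outermeas} and Theorem \ref{pilambda} do the rest. First I would introduce, for $V|A\in\P(X)$, the covering class
\[
\mathcal{C}(V|A):=\big\{(V_k|A_k)_{k\in\N}\text{ in }\X\colon V|A\sqsubseteq\sqcup_k(V_k|A_k)\big\},
\]
and define $\mu^\ast(V|A)$ to equal $0$ on $A^c$ and, on $A$, the a.s.\ infimum over $\mathcal{C}(V|A)$ of $\sum_{k\ge1}\mu(V_k|A_k)$ (with value $+\infty$ on the part of $A$ covered by no element of $\mathcal{C}(V|A)$). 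The crucial observation is that $\mathcal{C}(V|A)$ is a stable class: given covers $(V_k^{(n)}|A_k^{(n)})_k\in\mathcal{C}(V|A)$ and a partition $(D_n)$, the sequence $\big(\sum_n(V_k^{(n)}|A_k^{(n)})|D_n\big)_k$ again lies in $\mathcal{C}(V|A)$ by stability of $\X$ and (S3), and its total mass equals $\sum_n\big(\sum_k\mu(V_k^{(n)}|A_k^{(n)})\big)|D_n$ by (M1) and stability of $\mu$. Since moreover $\mathcal{C}(V|A)$ is stable under passing from two covers to the cover that uses the first where its total mass is smaller and the second elsewhere, the family of total masses is downward directed, so the defining infimum is an a.s.\ decreasing limit of total masses and, on a suitable partition of $A$, is approximated to within any $\eps\in L^0_{++}$ by the total mass of a \emph{single} cover. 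These two facts make $\mu^\ast$ a stable function.

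Next I would check the outer-measure axioms (A1)--(A3). Property (A1) is the locality built into the definition of $\mu^\ast$; property (A2) holds because $V|A\sqsubseteq W|B$ forces $\mathcal{C}(W|B)\subseteq\mathcal{C}(V|A)$. For (A3), fix $\eps\in L^0_{++}$; using the single-cover approximation choose $(V_{k,j}|A_{k,j})_j\in\mathcal{C}(V_k|A_k)$ with $\sum_j\mu(V_{k,j}|A_{k,j})\le\mu^\ast(V_k|A_k)+2^{-k}\eps$, note that $(V_{k,j}|A_{k,j})_{k,j}$ is a countable cover of $\sqcup_k(V_k|A_k)$, conclude $\mu^\ast(\sqcup_k(V_k|A_k))\le\sum_k\mu^\ast(V_k|A_k)+\eps$, and let $\eps\downarrow0$ along $\eps=1/n$. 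Hence $\mu^\ast$ is a stable outer measure.

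Then I would show $\X\subseteq\X(\mu^\ast)$. Being a stable ring, $\X$ is closed under finite conditional intersections (by Boolean arithmetic, $U\sqcap U'=U\sqcap(U\sqcap(U')^\sqsubset)^\sqsubset$), so for $V|A\in\X$ and any $W_k|B_k\in\X$ both $W_k|B_k\sqcap V|A$ and $W_k|B_k\sqcap(V|A)^\sqsubset$ lie in $\X$, are disjoint, and have conditional union $W_k|B_k$; hence $\mu(W_k|B_k)=\mu(W_k|B_k\sqcap V|A)+\mu(W_k|B_k\sqcap(V|A)^\sqsubset)$ by (M3). Given $W|B\in\P(X)$ and $(W_k|B_k)\in\mathcal{C}(W|B)$, the sequences $(W_k|B_k\sqcap V|A)_k$ and $(W_k|B_k\sqcap(V|A)^\sqsubset)_k$ cover $W|B\sqcap V|A$ and $W|B\sqcap(V|A)^\sqsubset$, respectively, whence
\[
\mu^\ast(W|B\sqcap V|A)+\mu^\ast(W|B\sqcap(V|A)^\sqsubset)\le\sum_k\mu(W_k|B_k);
\]
taking the infimum over $\mathcal{C}(W|B)$ gives ``$\le\mu^\ast(W|B)$'', and the reverse inequality is (A3). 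So $V|A$ is stably $\mu^\ast$-measurable.

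Finally, by Proposition \ref{p:outermeas} the class $\X(\mu^\ast)$ is a stable $\sigma$-algebra on which $\mu^\ast$ restricts to a stable measure; since $\X\subseteq\X(\mu^\ast)$ we get $\Sigma(\X)\subseteq\X(\mu^\ast)$, and I would set $\nu:=\mu^\ast|_{\Sigma(\X)}$. It then only remains to verify $\mu^\ast=\mu$ on $\X$. For $V|A\in\X$ the cover $(V|A,\{\ast\},\{\ast\},\dots)$ belongs to $\mathcal{C}(V|A)$, so $\mu^\ast(V|A)\le\mu(V|A)$ since $\mu(\{\ast\})=0$ by (M1); conversely, for $(V_k|A_k)\in\mathcal{C}(V|A)$ the disjointification $U_1:=V|A\sqcap V_1|A_1$, $U_k:=V|A\sqcap V_k|A_k\sqcap(U_1\sqcup\dots\sqcup U_{k-1})^\sqsubset$ yields pairwise disjoint sets in $\X$ with $\sqcup_kU_k=V|A\in\X$, so $\mu(V|A)=\sum_k\mu(U_k)\le\sum_k\mu(V_k|A_k)$ by (M2) and (M4), and taking the infimum gives $\mu(V|A)\le\mu^\ast(V|A)$. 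The main obstacle is the first paragraph: proving that the cover-infimum is a well-defined \emph{stable} element of $\bar L^0_+$ and is approximable by single covers; the remainder is then a faithful transcription of the classical argument, with Boolean arithmetic and Proposition \ref{p:outermeas} replacing their classical counterparts.
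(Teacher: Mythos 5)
Your proposal is correct and follows essentially the same route as the paper: define the outer measure via countable covers from $\X$, verify (A1)--(A3) and stability, show $\X\subseteq\X(\mu^\ast)$ by splitting each cover along $V|A$ and $(V|A)^\sqsubset$, and conclude via Proposition \ref{p:outermeas}. The one point to tighten is the definition of the covering class: as written, $\mathcal{C}(V|A)$ consists of \emph{full} covers, so when only a partial cover exists the infimum over the (then empty) class would be $+\infty$ on all of $A$, breaking stability of $\mu^\ast$; the fix is exactly what your parenthetical gestures at, namely to first produce by an exhaustion argument the largest $B_{V|A}\in\F$ on which $(V|A)|B_{V|A}$ is coverable and take the infimum over covers of that restriction, setting $\mu^\ast=\infty$ only on $B_{V|A}^c$ --- this is how the paper proceeds, and your downward-directedness observation then justifies the single-cover $\eps$-approximation the paper uses implicitly.
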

\begin{proof}
For each $V|A\in \P(X)$, let
\begin{equation*}
B_{V|A}:=\sup\{B^\prime\in \F\colon \text{ there is a sequence }(U_k|C_k) \text{ in } \mathcal{X} \text{ with } (V|A)|B^\prime \sqsubseteq \sqcup_k U_k|C_k\}.  
\end{equation*}
By an exhaustion argument, $B_{V|A}$ is attained. 
Let $\mathcal{U}(V|A)$ be the collection of all sequences $(U_k|C_k)$ in $\X$ such that $(V|A)|B_{V|A} \sqsubseteq \sqcup_k U_k|C_k$. 
By (S3) and the stability of $\X$ it follows that $\mathcal{U}(V|A)$ is a stable collection. 
Moreover, one has $B_{\sum (V_k|A_k)|D_k}=\cup_k (B_{V_k|A_k}\cap D_k)$ for all sequences $(V_k|A_k)$ in $\P(X)$ and partitions $(D_k)$ of $\Omega$. 
Thus $\mu^\ast\colon \P(X)\to\bar L^0_+$ defined by 
\[
 \mu^\ast(V|A):=\inf\{\sum_{k\geq 1} \mu(U_k|C_k)\colon (U_k|C_k)\in \mathcal{U}(V|A)\}|B_{V|A} + \infty|B_{V|A}^c 
\]
is a well-defined stable set function. 
We want to show that $\mu^\ast$ is a stable outer measure. 
Properties (A1) and (A2) are easy to check. 
 As for (A3), let $(V_k|A_k )$ be a sequence in $\P(X)$. 
 Clearly, $B:=\cup_k B_{V_k|A_k}=B_{\sqcup_k (V_k|A_k)}$. 
 Fix $\eps >0$. 
 For each $k$, let $(U_{k,n}|C_{k,n})\in \mathcal{U}(V_k|A_k)$ be such that 
 \begin{equation*}
  \sum_{n\geq 1}\mu(U_{k,n}|C_{k,n}) \leq \mu^\ast(V_k|A_k )+ 2^{-\eps} \quad \text{on }B. 
 \end{equation*}
Then 
 \begin{equation*}\label{eq:8}
  \mu^\ast(\sqcup_{k} V_k|A_k ) \leq \sum_{n,k\geq 1}\mu^\ast(U_{k,n}|C_{k,n}) \leq \sum_{k\geq 1} \mu^\ast(V_k|A_k )+\eps \quad \text{on } B, 
 \end{equation*}
which proves (A3). 

 Let $V|A\in\mathcal{X}$. We want to show that $V|A\in \X(\mu^\ast)$, that is, 
 \begin{equation}\label{eq03}
  \mu^\ast(Z|C\sqcap V|A)+\mu^\ast(Z|C\sqcap (V|A)^\sqsubset)\leq \mu^\ast(Z|C), \quad \text{ for all } Z|C\in\P(X). 
 \end{equation}
 Let $Z|C\in \P(X)$. On $B_{Z|C}^c$ there is nothing to show. 
 On the other hand, we have $B_{Z|C\sqcap V|A}\cup B_{Z|C\sqcap (V|A)^\sqsubset}\subseteq B_{Z|C}$. 
 Moreover,  for $(U_k|C_k)\in \mathcal{U}(Z|C)$ it follows from (M2) that 
 \begin{equation*}
  \sum_{k\geq 1}\mu(U_k|C_k)=\sum_{k\geq 1} \mu(U_k|C_k \sqcap V|A)+\sum_{k\geq 1}\mu(U_k|C_k \sqcap (V|A)^\sqsubset). 
 \end{equation*}
 Hence  \eqref{eq03} is also satisfied on $B_{Z|C}$. 
We have shown $\mathcal{X}\subseteq \mathcal{X}(\mu^\ast)$. 
Now it follows from Proposition \ref{p:outermeas} that the restriction of $\mu^\ast$ to $\Sigma(\X)$, denoted by $\nu$, is a stable measure.  
Since $B_{V|A}=\Omega$ for $V|A\in \X$, and by (M4) and (M6), 
\[
\mu(V|A)=\mu(\sqcup_k (U_k|C_k\sqcap V|A))\leq \sum_{k} \mu(U_k|C_k\sqcap V|A)\leq \sum_k \mu(U_k|C_k),
\]
for all $(U_k|C_k)\in \mathcal{U}(V|A)$, it follows that $\nu$ coincides with $\mu$ on $\X$. 
\end{proof}
As for the uniqueness of the previous extension, we have the following result. 
\begin{proposition}\label{p:uniqueness}
Let $(X,\mathcal{X})$ be a stable measurable space and $\mathcal{E}$ a stable generator of $\mathcal{X}$ which is closed under finite conditional intersections. 
For two  stable measures $\mu,\nu$ on $(X,\X)$, suppose $\mu(V|A)=\nu(V|A)$ for all $V|A\in\mathcal{E}$ and there exists a sequence $(Z_k|C_k)$ in $\mathcal{E}$ with $\sqcup_k Z_k|C_k=X$ and $\mu(Z_k|C_k)=\nu(Z_k|C_k)<\infty$ for all $k$. Then $\mu=\nu$.  
\end{proposition}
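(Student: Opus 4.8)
The plan is to transport the classical Dynkin uniqueness argument into the conditional setting: use the conditional $\pi$–$\lambda$ theorem (Theorem~\ref{pilambda}) to upgrade agreement of $\mu$ and $\nu$ on the $\sqcap$-closed generator $\mathcal{E}$ to agreement on $\X=\Sigma(\mathcal{E})$, first for the ``truncated'' set functions $V|A\mapsto\mu(V|A\sqcap Z_k|C_k)$, and then to remove the finiteness restriction by disjointifying the exhausting sequence and invoking countable additivity (M2).

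First, for each $k\in\N$ I would form the collection
\[
\mathcal{D}_k:=\{V|A\in\X\colon \mu(V|A\sqcap Z_k|C_k)=\nu(V|A\sqcap Z_k|C_k)\}
\]
and verify that it is a stable Dynkin system. Stability of $\mathcal{D}_k$ as a subcollection of $\P(X)$ comes from the stability of the set functions $\mu,\nu$ together with (S2); that $X\in\mathcal{D}_k$ is precisely the hypothesis $\mu(Z_k|C_k)=\nu(Z_k|C_k)$; closure under countable disjoint conditional unions is (M2) applied to $\mu$ and to $\nu$, using the Boolean identity $(\sqcup_j W_j|B_j)\sqcap Z_k|C_k=\sqcup_j(W_j|B_j\sqcap Z_k|C_k)$. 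For complementation one uses (M5): since $V|A\sqcap Z_k|C_k\sqsubseteq Z_k|C_k$ and its $\mu$-measure is $\leq\mu(Z_k|C_k)<\infty$, Boolean arithmetic (namely $Z_k|C_k\sqcap(V|A)^\sqsubset=Z_k|C_k\sqcap(V|A\sqcap Z_k|C_k)^\sqsubset$) gives
\[
\mu\bigl(Z_k|C_k\sqcap(V|A)^\sqsubset\bigr)=\mu(Z_k|C_k)-\mu(V|A\sqcap Z_k|C_k),
\]
and likewise for $\nu$; the right-hand sides coincide, so $(V|A)^\sqsubset\in\mathcal{D}_k$. Finally $\mathcal{E}\subseteq\mathcal{D}_k$, because $V|A\sqcap Z_k|C_k\in\mathcal{E}$ whenever $V|A\in\mathcal{E}$ (as $\mathcal{E}$ is closed under finite conditional intersections) and $\mu=\nu$ on $\mathcal{E}$. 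Theorem~\ref{pilambda} then yields $\X=\Sigma(\mathcal{E})=\D(\mathcal{E})\subseteq\mathcal{D}_k$, so $\mathcal{D}_k=\X$ for every $k$.

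To conclude, I would disjointify: put $Y_1|D_1:=Z_1|C_1$ and $Y_k|D_k:=Z_k|C_k\sqcap(\sqcup_{j<k}Z_j|C_j)^\sqsubset$ for $k\geq2$. Since $(\P(X),\sqcup,\sqcap,{}^\sqsubset,\{\ast\},X)$ is a complete Boolean algebra, the $Y_k|D_k$ are pairwise disjoint, satisfy $Y_k|D_k\sqsubseteq Z_k|C_k$, and $\sqcup_kY_k|D_k=\sqcup_kZ_k|C_k=X$. Now fix $V|A\in\X$. Then $V|A\sqcap Y_k|D_k\in\X=\mathcal{D}_k$, and since $Y_k|D_k\sqsubseteq Z_k|C_k$ forces $(V|A\sqcap Y_k|D_k)\sqcap Z_k|C_k=V|A\sqcap Y_k|D_k$, membership in $\mathcal{D}_k$ reads $\mu(V|A\sqcap Y_k|D_k)=\nu(V|A\sqcap Y_k|D_k)$. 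Applying (M2) to both measures along the disjoint cover $V|A=\sqcup_k(V|A\sqcap Y_k|D_k)$ gives
\[
\mu(V|A)=\sum_{k\geq1}\mu(V|A\sqcap Y_k|D_k)=\sum_{k\geq1}\nu(V|A\sqcap Y_k|D_k)=\nu(V|A),
\]
which is the assertion $\mu=\nu$.

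The one step that is genuinely conditional, rather than a transcription of the classical proof, is checking that each $\mathcal{D}_k$ is a \emph{stable} collection — this is what makes it a legitimate stable Dynkin system, so that Theorem~\ref{pilambda} applies — and it hinges on the stability of $\mu,\nu$ and on (S2). The finiteness hypothesis $\mu(Z_k|C_k)=\nu(Z_k|C_k)<\infty$ is used exactly once, in the complementation step via (M5), just as classically.
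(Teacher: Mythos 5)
Your proof is correct and follows essentially the same route as the paper: a stable Dynkin system of sets on which the truncated measures $\mu(\cdot\sqcap Z_k|C_k)$ and $\nu(\cdot\sqcap Z_k|C_k)$ agree, verified via stability, (M2) and (M5), then Theorem~\ref{pilambda}, then (M2) along the exhausting sequence. The only difference is cosmetic: you run the Dynkin argument directly for the sets $Z_k|C_k$ and spell out the disjointification in the final step, which the paper leaves implicit in ``the claim follows from (M2)''.
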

 \begin{proof}
 The proof follows from a monotone class argument in the present context. 
 Indeed, for $V|A\in \mathcal{E}$ with $\mu(V|A)=\nu(V|A)<\infty$, let 
 \begin{equation*}
  \mathcal{D}_{V|A}=\{W|B\in \mathcal{X}\colon \mu(V|A\sqcap W|B)=\nu(V|A\sqcap W|B)\}. 
  \end{equation*} 
  Then $X\in \D_{V|A}$. Stability of $\D_{V|A}$ follows from (S3). 
  By (M5) and Boolean arithmetic, $\D_{V|A}$ is closed under complementation. 
  By (M2), $\D_{V|A}$ contains the conditional union of every pairwise disjoint sequence of its elements. 
  We have that $\D_{V|A}$ is a stable Dynkin system with $\mathcal{E}\subseteq \D_{V|A}$. 
  By Theorem \ref{pilambda}, $\mathcal{D}_{V|A}=\mathcal{X}$. Hence, 
  \begin{equation*}
   \mu(W|B\sqcap V|A)=\nu(W|B\sqcap V|A),
  \end{equation*}
  for all $W|B\in\mathcal{X}$ and $V|A\in\mathcal{E}$ with $\mu(V|A)=\nu(V|A)$. 
  Therefore, 
 \begin{equation*}
  \mu(W|B  \sqcap Z_k|C_k)=\nu( W|B \sqcap Z_k|C_k)
 \end{equation*}
 for all $k$, and the claim follows from (M2). 
 \end{proof}
  Fix a stable measure space $(X,\X,\mu)$ in the remainder of this section. 
We construct a conditional Lebesgue integral for  stably measurable functions $f\colon X\to L^0$, where we endow $L^0$ with the stable Borel $\sigma$-algebra $\B(L^0)$ defined in Example \ref{exp:algebra}.2). 
Consider the following stable generators of $\B(L^0)$. 
For $r\in L^0$, let 
\begin{align*} 
[r,\infty[&:=\{s\in L^0\colon r\leq s< \infty\},\quad ]r,\infty[:=\{s\in L^0\colon r< s< \infty\},\\
]-\infty, r]&:=\{s\in L^0\colon -\infty < s\leq r\},\quad ]-\infty, r[:=\{s\in L^0\colon -\infty< s< r\}. 
\end{align*}
Notice that $[r,\infty[^\sqsubset= ]-\infty, r[$ and $]r,\infty[^\sqsubset=]-\infty, r]$. 
The collection of all of each type of these intervals is a stable collection in $\P(L^0)$, and by Boolean arithmetic, a generator of $\B(L^0)$. 

The concatenation of a sequence of  stably measurable functions $f_k\colon X\to L^0$ along a partition $(A_k)$ of $\Omega$ is defined in \eqref{eq:concatenationfct}, and it is a  stably measurable function. 
The sum $f+g$ and product $f\cdot g$ of two  stably measurable functions $f,g\colon X\to L^0$ is defined pointwise, and it can be checked that they are  stably measurable functions.  
Further, we write $f\leq g$ whenever $f(x)\leq g(x)$ in $L^0$ for all $x\in X$. 
It follows that $\max\{f,g\}$ and $\min\{f,g\}$ are stably measurable.  
The convergence of a sequence of stably measurable functions $f_k\colon X\to L^0$  to a function $f\colon X\to L^0$ is defined by 
\begin{equation}\label{eq:convergence}
f_k(x)\to f(x) \text{ a.s.}
\end{equation}
for all $x\in X$, if this limit exists, in which case $f$ is stably measurable.  

We introduce stable indicator functions and  stable elementary  functions. 
Let $V|A\in \P( X)$, and for each $x\in X$ let 
\[
 A_x:= \sup \{A^\prime\in \F \colon A^\prime \subseteq A, x|A^\prime \in V|A^\prime\}. 
\]
By an exhaustion argument, $A_x$ is attained. 
Further, $A_{\sum x_k|A_k}=\cup_k (A_{x_k}\cap A_k)$ for every sequence $(x_k)$ in $X$ and each partition $(A_k)$ of $\Omega$. 
Thus the function $1_{V|A}\colon X\to L^0$ defined by $x\mapsto 1|A_x + 0|A_x^c$ is well-defined and stable, and called the \emph{stable indicator function of $V|A$}.   
The following properties of stable indicator functions can be directly checked from the definition. 
\begin{itemize}
\item[(D1)] $1_{V|A}=1_{V}|A+0|A^c$ for all $V|A\sqsubseteq X$; 
\item[(D2)] $1_{\sum (V_k|A_k)|B_k}=\sum 1_{V_k|A_k}|B_k$ for all sequences $(V_k|A_k)$ in $\P(X)$ and partitions $(B_k)$ of $\Omega$;  
\item[(D3)] $1_{\sqcup_{i} V_k|A_k}=\sum_{k} 1_{V_k|A_k}$ for all sequences $(V_k|A_k)$ of  pairwise disjoint elements in  $\P(X)$; 
\item[(D4)] $1_{V|A}=1-1_{(V|A)^\sqsubset}$ for all $V|A\sqsubseteq X$. 
\end{itemize}
\begin{definition}
Let $(r_k)_{k\leq n}$ be a finite family in $L^0_+$ and $(V_k|A_k)_{k\leq n}$ a finite family of pairwise disjoint elements in $\X$ with $\sqcup_k V_k|A_k=X$. 
The function $\sum_{k\leq n} r_k 1_{V_k|A_k}\colon X\to L^0_+$ is called a \emph{stable elementary function}. 
\end{definition}
Without further notice, we identify $\N$ with a subset of $L^0_s(\N)$ by the embedding $n\mapsto n1_\Omega$ and $\R$ with a subset of $L^0$ by the embedding $r\mapsto r1_\Omega$. Also $L^0_s(\N)$ can be understood as a subset of $L^0$. 
\begin{remark}
We can define $r \cdot s$ and $r+s$ for arbitrary $r,s\in \bar L^0_+$ by considering the conventions $a\cdot \infty=\infty \cdot a=\infty$ for all $a\in \mathbb{R}_{++}$, $a+ \infty=\infty + a=\infty$ for all $a\in \mathbb{R}_{+}$ and $0\cdot \infty =\infty \cdot 0=0$ in a pointwise a.s.~sense. 
\end{remark}
\begin{definition}
For a  stable elementary  function $f=\sum_{k\leq n} r_k 1_{V_k|A_k}$, we define its \emph{stable Lebesgue integral} as 
\begin{equation*}\label{eq:int1}
\int_X f d\mu:=\sum_{k\leq n}  r_k \mu(V_k|A_k)\in \bar L^0_+. 
\end{equation*}
\end{definition}
To extend the previous definition to  stably measurable functions, we need the following two lemmas. 
\begin{lemma}\label{lemma100}
Let $f\colon X\to L^0_+$ be a stable elementary function and $(f_n)$ an increasing sequence of stable elementary functions $f_n\colon X\to L^0_+$ such that $f\leq \sup_n f_n$. 
Then it holds $\int_X f d\mu \leq \sup_n \int_X f_n d\mu$. 
\end{lemma}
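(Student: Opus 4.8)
The plan is to transcribe the classical proof of this elementary fact into the conditional setting: replace the real scalar $c\in(0,1)$ by the $L^0$-constants $c_j:=(1-\tfrac1j)1_\Omega$, replace the truncation sets $\{f_n\ge cf\}$ by conditional analogues lying in $\X$, and invoke continuity from below (property (M7)) of the stable measure $\mu$ to pass to the limit in $n$.

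First I would fix a representation $f=\sum_{k\le m}r_k1_{V_k|A_k}$ with $(V_k|A_k)_{k\le m}$ a finite pairwise disjoint family satisfying $\sqcup_{k\le m}V_k|A_k=X$ and $r_k\in L^0_+$. I would also record the routine facts---each obtained by passing to a common refinement of the defining partitions and using finite additivity (M2) together with Boolean arithmetic---that on stable elementary functions the stable Lebesgue integral is additive, positively $L^0$-homogeneous, and monotone (i.e.\ $g\le h$ implies $\int_Xg\,d\mu\le\int_Xh\,d\mu$), and that $1_{U|B}1_{U'|B'}=1_{U|B\sqcap U'|B'}$, which follows directly from the definition of the stable indicator function.

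Next, for a fixed $j$ I set $c:=c_j$ and consider the stably measurable function $g_n:=f_n-cf\colon X\to L^0$, a difference of stably measurable functions. Let $W_n:=g_n^{-1}\big([0,\infty[\big)$; this lies in $\X$ since $[0,\infty[$ is a member of the stable generator of $\B(L^0)$ fixed above and $g_n$ is stably measurable, and from the definition of the conditional pre-image one checks that $x|A'\in W_n|A'$ exactly when $(f_n(x)-cf(x))|A'\ge0$. I would then verify the two crucial properties of $(W_n)$: it is increasing, which is immediate from $f_n\le f_{n+1}$; and $\sqcup_nW_n=X$. The latter uses the hypothesis $f\le\sup_nf_n$ and $c<1$: for each fixed $x\in X$ the measurable sets $\{f_n(x)\ge cf(x)\}$ increase to $\Omega$ (trivially on $\{f(x)=0\}$, and on $\{f(x)>0\}$ because there $\sup_nf_n(x)\ge f(x)>cf(x)$), so disjointifying this countable cover produces a partition of $\Omega$ that exhibits $x\in\sqcup_nW_n$ in the sense of \eqref{eq:union}; this is the exhaustion/concatenation argument of Remark \ref{r:exhaustion}.

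With these at hand, monotonicity, homogeneity, and $1_{V_k|A_k}1_{W_n}=1_{V_k|A_k\sqcap W_n}$ give
\[
\int_Xf_n\,d\mu\ \ge\ \int_Xf_n1_{W_n}\,d\mu\ \ge\ \int_Xcf1_{W_n}\,d\mu\ =\ c\sum_{k\le m}r_k\,\mu\big(V_k|A_k\sqcap W_n\big),
\]
the middle inequality holding because $f_n1_{W_n}\ge cf1_{W_n}$ by the very definition of $W_n$. Since $W_n\uparrow X$ one has $V_k|A_k\sqcap W_n\uparrow V_k|A_k$ for each $k$ (from the explicit form of $W_n$ and Boolean arithmetic), so (M7) gives $\mu(V_k|A_k\sqcap W_n)\uparrow\mu(V_k|A_k)$ a.s., whence $\sup_n\int_Xf_n\,d\mu\ge c_j\sum_{k\le m}r_k\mu(V_k|A_k)=c_j\int_Xf\,d\mu$. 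Letting $j\to\infty$ and using $\sup_j(1-\tfrac1j)z=z$ for $z\in[0,\infty]$ in the pointwise a.s.\ sense yields $\sup_n\int_Xf_n\,d\mu\ge\int_Xf\,d\mu$. I expect the only genuine obstacle to be the rigorous passage from the pointwise statement ``$\{f_n(x)\ge cf(x)\}\uparrow\Omega$ for each fixed $x$'' to the conditional-set identities $\sqcup_nW_n=X$ and $V_k|A_k\sqcap W_n\uparrow V_k|A_k$, together with a careful reading of the conditional pre-image; the remainder is the classical argument transcribed.
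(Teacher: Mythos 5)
Your proposal is correct and follows essentially the same route as the paper's proof: truncate via $W_n:=(f_n-cf)^{-1}([0,\infty[)$, use $f_n\geq cf1_{W_n}$ and $W_n\uparrow X$, apply (M7) to $V_k|A_k\sqcap W_n\uparrow V_k|A_k$, and let the scalar tend to $1$. The only cosmetic difference is that the paper works with an arbitrary $r\in L^0$ with $0<r<1$ rather than the constants $1-\tfrac1j$, which changes nothing; your more explicit justification of $\sqcup_nW_n=X$ via disjointification is a welcome elaboration of a step the paper leaves implicit.
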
 
\begin{proof}
Suppose $f=\sum_{k\leq m} r_k 1_{V_k|A_k}$. 
Fix $r\in L^0$ with $0<r<1$. 
For every $n$, let $W_n|B_n:=(f_n-r f)^{-1}([0,\infty[)$. 
Then $f_n\geq r f 1_{W_n|B_n}$, and therefore 
\begin{equation}\label{eq800}
\int_X f_n d\mu\geq r\int_X f 1_{W_n|B_n} d\mu
\end{equation}
  for each $n$. 
Since $(f_n)$ is an increasing sequence with $f\leq \sup_n f_n$, it follows that $(W_n|B_n)$ is an increasing sequence with $X=\sqcup_n W_n|B_n$. 
Thus $(W_n|B_n\sqcap V_k|A_k)$ is an increasing sequence with $V_k|A_k=\sqcup_n (W_n|B_n\sqcap V_k|A_k)$ for each $k$. 
By (M7), 
\[
\int_X f d\mu=\lim_{n} \int_X f 1_{W_n|B_n} d\mu. 
\]
Then from \eqref{eq800}
\[
\sup_n \int_X f_n d\mu\geq \sup_n r \int_X f 1_{W_n|B_n}d\mu= r\lim_n \int_X f 1_{W_n|B_n}d\mu= r \int_X f d\mu. 
\]
Since $r\in L^0$ with $0<r<1$ is arbitrary, the claim follows. 
\end{proof}
\begin{lemma}\label{l:approxbysimplefct}
For every  stably measurable function $f\colon X\to L^0_+$ there exists an increasing sequence $(f_n)$ of stable elementary functions such that $\sup_n f_n(x)=f(x)$ for all $x\in X$. 
\end{lemma}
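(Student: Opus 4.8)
The plan is to mimic the classical dyadic approximation, but to carry out the truncation and partitioning \emph{conditionally}, i.e.\ in terms of the stable $\sigma$-algebra $\mathcal X$ and the stable generators $[r,\infty[$ of $\B(L^0)$. Fix $n\in\N$ and consider the stably measurable function $f\colon X\to L^0_+$. The idea is to cut the range $[0,\infty[$ into the $L^0$-valued slices $[k2^{-n},(k+1)2^{-n}[$ for $k=0,1,\dots,n2^n-1$ together with the tail $[n,\infty[$, pull these back under $f$ to obtain conditional sets in $\mathcal X$, and define
\[
f_n:=\sum_{k=0}^{n2^n-1} \tfrac{k}{2^n}\,1_{V_{n,k}|A_{n,k}} + n\,1_{W_n|B_n},
\]
where $V_{n,k}|A_{n,k}:=f^{-1}([k2^{-n},\infty[)\sqcap f^{-1}([(k+1)2^{-n},\infty[)^\sqsubset$ and $W_n|B_n:=f^{-1}([n,\infty[)$. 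First I would check, using that $f$ is stably measurable and that the intervals generate $\B(L^0)$, that each $V_{n,k}|A_{n,k}$ and $W_n|B_n$ lies in $\mathcal X$; that for fixed $n$ these are pairwise disjoint with conditional union $X$ (this is Boolean arithmetic on the telescoping complements, using $[r,\infty[^\sqsubset=]-\infty,r[$); and hence that $f_n$ is a genuine stable elementary function with values in $L^0_+$.

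Next I would verify monotonicity $f_n\le f_{n+1}$. This is the usual observation that the level $k2^{-n}$ of the $n$-th partition is either retained or split into $2k2^{-(n-1)}$ and $(2k+1)2^{-(n-1)}$ at the next stage, so the floor value never decreases; in the conditional setting it amounts to checking, for each $x\in X$, the pointwise inequality $f_n(x)\le f_{n+1}(x)$ in $L^0$, which follows from the set inclusions among the pullbacks together with property (D1)--(D4) of stable indicator functions. Finally, for the convergence $\sup_n f_n(x)=f(x)$: I would argue pointwise in $x\in X$. On the measurable set $\{f(x)<\infty\}$ one has, by construction, $0\le f(x)-f_n(x)\le 2^{-n}$ for all $n$ large enough that $n\ge f(x)$ a.s.\ on the relevant piece, hence $f_n(x)\uparrow f(x)$ a.s.\ there; on $\{f(x)=\infty\}$ one has $f_n(x)=n\uparrow\infty=f(x)$. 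Since $f(x),f_n(x)\in L^0$, the value $f(x)=\infty$ cannot occur on a set of positive measure, so in fact $\sup_n f_n(x)=f(x)$ holds as an identity in $L^0$; this uses the convention on the convergence \eqref{eq:convergence} and the Dedekind completeness of $L^0$.

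The main obstacle I anticipate is \emph{not} the approximation idea itself but the bookkeeping of conditional pullbacks: unlike classical pullbacks, $f^{-1}(W|A)$ is defined in \eqref{eq:preimage} via an exhaustion that may shrink the conditioning set to some $C_\ast\subseteq A$, so one must check that the slices $V_{n,k}|A_{n,k}$ genuinely partition $X$ (their conditioning sets add up to $\Omega$) rather than leaving a ``gap'' where $f$ has no value in the chosen slice. This is handled by noting that $f$ is defined on all of $X$, so $\sqcup_k f^{-1}([k2^{-n},\infty[)\sqcup f^{-1}([n,\infty[)^\sqsubset$ exhausts $X$ by Boolean arithmetic; once this is in place, the stability of $f$ and of the conditional set operations makes $f_n$ automatically stable, and the rest is routine. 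A secondary point to watch is that we only need $\sup_n f_n=f$ pointwise (not the stronger convergence notion \eqref{eq:convergence}), which is exactly what is stated, so no uniformity over $x$ is required.
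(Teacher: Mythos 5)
Your proposal is correct and follows essentially the same route as the paper's proof: the dyadic slicing $\{k2^{-n}\le f\}\sqcap\{f<(k+1)2^{-n}\}$ with tail $\{f\ge n\}$, verification that the conditioning sets of these slices partition $\Omega$ (the paper does this via the exhaustion sets $A_k$ attached to each $x\in X$), monotonicity by the usual splitting of each level into two at stage $n+1$, and pointwise convergence using that $f$ is $L^0$-valued. Your explicit flagging of the possible shrinkage in the conditional preimage is exactly the point the paper's proof handles implicitly, so nothing is missing.
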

\begin{proof}
We denote $\{f< r\}:=f^{-1}(]-\infty,r[)$ and $\{r\leq f\}:=f^{-1}([r,\infty[)$,  $r\in L^0$.  
For each $n\in \N$, let 
\[
W_{k,n}|A_{k,n}:=
\begin{cases}
\{k2^{-n}\leq f\}\sqcap \{f<(k+1)2^n\}, \quad 0\leq k\leq n 2^n-1, \\
\{f\geq n\}, \quad k=n2^n. 
\end{cases}
\]
Define
\[
f_n:=\sum_{1\leq k\leq n2^n} k 2^{-n} 1_{W_{k,n}|A_{k,n}}. 
\]
Fix $x\in X$ and $n\in \N$. 
For $k=0,\ldots,n2^n-1$, we have 
\begin{align*}
A_k&=\sup\{A^\prime\in\F\colon A^\prime\subset A_{k,n}, x|A^\prime\in W_{k,n}|A^\prime\}\\
&=\sup\{A^\prime\in\F\colon A^\prime\subset A_{2k,n+1}, x|A^\prime\in W_{2k,n+1}|A^\prime \text{ or }A^\prime\subset A_{2k+1,n+1}, x|A^\prime\in W_{2k+1,n+1}|A^\prime\}, 
\end{align*}
where both suprema are attained by an exhaustion argument. 
Let 
\[
A_{n 2^n}=\sup\{A^\prime \in\F\colon A^\prime\subset A_{n2^n,n}, x|A\in W_{n2^n,n}|A^\prime\}. 
\]
Since $(W_{k,n}|A_{k,n})_{0\leq k\leq n 2^n}$ is a partition of $X$, $(A_k)_{0\leq k\leq n 2^n}$ is a partition of $\Omega$. 
Hence from $f_n(x)\leq f_{n+1}(x)$ on $A_k$ for all $0\leq k \leq n2^n$ it follows that $f_n(x)\leq f_{n+1}(x)$. 
Thus $(f_n)$ is increasing. 
By construction, $\sup_n f_n=f$. 
\end{proof}
Since $L^0$ is a vector lattice, every  stably measurable function $f\colon X\to L^0$ can be written as the difference $f^+-f^-$ of the  stably measurable functions $f^+:=\max\{f,0\}$ and $f^-:=\max\{-f(x),0\}$.  
It follows from the previous two lemmas that the following is well-defined. 
\begin{definition}
Let $f\colon X\to L^0_+$ be  stably measurable. 
Define 
\begin{equation*}\label{eq:int2}
 \int_X f d\mu:= \sup_n \int_X f_n d\mu\in \bar L^0_+,
\end{equation*}
where $(f_n)$ is an increasing sequence of  stable elementary  functions with $\sup_n f_n=f$. 
Let  $f\colon X\to L^0$ be  stably measurable. 
We say that $f$ is \emph{integrable} whenever $\int_X f^+ d\mu,\int_X f^- d\mu\in L^0$. 
In this case, we define its \emph{stable Lebesgue integral} by 
\begin{equation*}\label{eq:int3}
\int_X f d\mu:=\int_X f^+ d\mu - \int_X f^- d\mu. 
\end{equation*}
\end{definition}
The following properties of the stable integral can be verified directly:  
 \begin{itemize}
 \item[(I1)] $\int_X \sum f_k|A_k d\mu=\sum \int_X f_k d\mu|A_k$ for every sequence $(f_k)$ of  integrable functions and every partition $(A_k)$ of $\Omega$.    
 \item[(I2)] $\int_X f d\mu \leq \int_X g d\mu$ for every pair of  integrable functions $f,g$ with $f\leq g$. 
 \item[(I3)] $\int_X r f + g d\mu= r \int_X f d\mu + \int_X g d\mu$ for all  integrable functions $f,g$ and every $r\in L^0$.  
 \end{itemize}
We have the following version of the monotone convergence theorem. 
\begin{theorem}\label{t:monotone convergence}
Let $(f_n)$ be an increasing sequence of integrable functions $f_n\colon X\to L^0_+$. Then it holds 
\begin{equation*}
\int_X \sup_n f_n d\mu = \sup_n \int_X f_n d\mu. 
\end{equation*}
In particular, 
\[
\int_X \left(\sum_{n=1}^\infty f_n\right)d\mu=\sum_{n=1}^\infty \int_X f_n d\mu.  
\]
\end{theorem}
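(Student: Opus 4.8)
\emph{Proof proposal.} The plan is to mirror the classical proof of the monotone convergence theorem, the only genuinely conditional ingredients being the reduction to stable elementary functions and an application of Lemma \ref{lemma100} through a diagonalization, all suprema, limits and monotonicity being understood in the a.s.~order on $\bar L^0$.

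First I would set $f:=\sup_n f_n$, a stably measurable function with values in $\bar L^0_+$; its integral $\int_X f\,d\mu$ is defined exactly as for $L^0_+$-valued functions, since Lemma \ref{l:approxbysimplefct} applies verbatim to $\bar L^0_+$-valued stably measurable functions. The inequality $\sup_n \int_X f_n\,d\mu \le \int_X f\,d\mu$ is immediate from $f_n\le f$ and (I2). For the reverse inequality, write $\int_X f\,d\mu=\sup_m \int_X g_m\,d\mu$ for an increasing sequence $(g_m)$ of stable elementary functions with $\sup_m g_m=f$ (Lemma \ref{l:approxbysimplefct}); it then suffices to prove $\int_X g_m\,d\mu \le \sup_n \int_X f_n\,d\mu$ for each fixed $m$.

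Next I would build a diagonal sequence of stable elementary functions dominating $f$. For every $n$, Lemma \ref{l:approxbysimplefct} yields an increasing sequence $(f_{n,k})_k$ of stable elementary functions with $\sup_k f_{n,k}=f_n$. Put $h_N:=\max_{1\le n\le N} f_{n,N}$. A finite maximum of stable elementary functions is again stable elementary --- pass to the common refinement of the underlying pairwise disjoint families in $\mathcal X$ and take pointwise maxima of the coefficients in $L^0_+$ --- so each $h_N$ is stable elementary; $(h_N)$ is increasing since $f_{n,N}\le f_{n,N+1}$ and more terms enter the maximum; and $\sup_N h_N=f$, because $h_N\le \max_{n\le N} f_n=f_N\le f$ while $h_N\ge f_{n,N}\ge f_{n,k}$ for $N\ge\max(n,k)$ gives $\sup_N h_N\ge \sup_k f_{n,k}=f_n$ for every $n$. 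Applying Lemma \ref{lemma100} to the stable elementary function $g_m$ and the increasing sequence $(h_N)$ (valid since $g_m\le f=\sup_N h_N$) gives $\int_X g_m\,d\mu\le \sup_N \int_X h_N\,d\mu$, and $h_N\le f_N$ together with (I2) yields $\int_X h_N\,d\mu\le \int_X f_N\,d\mu\le \sup_n\int_X f_n\,d\mu$. Taking the supremum over $m$ proves the displayed equality. For the ``in particular'' assertion I would apply this to the increasing sequence of partial sums $g_N:=\sum_{n=1}^N f_n$, which are integrable with values in $L^0_+$ and satisfy $\sup_N g_N=\sum_{n=1}^\infty f_n$ pointwise a.s.; by linearity (I3) one has $\int_X g_N\,d\mu=\sum_{n=1}^N\int_X f_n\,d\mu$, and passing to the supremum gives the claim.

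I expect the main obstacle to be organizational rather than conceptual: one must verify that the class of stable elementary functions is closed under finite maxima (handled by common refinement of the defining partitions) and that the monotone limits, taken in the Dedekind-complete order of $\bar L^0$, are compatible with the concatenation structure; beyond Lemma \ref{lemma100} and Lemma \ref{l:approxbysimplefct} no new idea is required.
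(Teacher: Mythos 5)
Your proposal is correct and follows essentially the same route as the paper: the paper's proof also reduces to constructing the diagonal sequence $h_m:=\max_{k\le m}g_{m,k}$ of stable elementary functions satisfying $h_m\le f_m$ and $\sup_m h_m=f$, and then concludes via (I2) and Lemma \ref{lemma100} (the latter entering through the well-definedness of the integral, which you make explicit by comparing with an arbitrary approximating sequence $(g_m)$). Your additional remarks --- that finite maxima of stable elementary functions are stable elementary via common refinement, and that the definitions extend to $\bar L^0_+$-valued limits --- are details the paper leaves implicit, but they introduce no divergence in method.
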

\begin{proof}
Let $f:=\sup_n f_n$. 
By (I2), it is enough to build a sequence $(h_n)$ of  stable elementary  functions with $\sup_n h_n=f$ and $h_n\leq f_n$ for all $n$. 
By Lemma \ref{l:approxbysimplefct}, for each $f_n$ there exists a sequence $(g_{m,n})$ of  stable elementary  functions such that $f_n=\sup_m g_{m,n}$. 
Then $h_m:=\max_{k\leq m} g_{m,k}$ satisfies the required. 
\end{proof}
\section{Kernels, conditional distributions and stable measures}\label{s:kernel}
In this section, we establish a link between kernels and stable measures. 
Based thereupon, we can extend the representation of conditional distribution of random variables with values in a not necessarily standard Borel space by replacing probability kernels with stable probability measures. 
We apply this representation to compute the conditional expectation of functions of random variables by means of the stable Lebesgue integral. 

Unless mentioned otherwise, $(E,\mathscr{E})$ denotes an arbitrary measurable space throughout this section. 
Recall that $L^0_m(E)$ denotes the space of all measurable functions $x\colon \Omega\to E$, which are called random variables with values in $E$. 
Let $\mathscr{R}$ be a classical ring of sets generating the $\sigma$-algebra $\mathscr{E}$. 
For every sequence $(F_k)$ in $\mathscr{R}$ and each partition $(A_k)$, let 
\begin{equation}\label{eq:ring1}
\sum L^0_m(F_k)|A_k:=\{x\in L^0_m(E)\colon x|A_k \in F_k \text{ a.s.~for all }k\}. 
\end{equation}
By definition, $\sum L^0_m(F_k)|A_k$ is a stable subset of $L^0_m(E)$. Since $\mathscr{R}$ is a ring, by a straightforward computation, 
\begin{equation}\label{eq:ring}
\mathcal{R}:=\left\{(\sum L^0_m(F_k)|A_k)|A\colon (F_k) \text{ sequence in }\mathscr{R}, (A_k) \text{ partition of }\Omega, A\in\F\right\}
\end{equation}
is a stable ring on $L^0_m(E)$. 
We consider the stable $\sigma$-algebra $\Sigma(\mathcal{R})$ generated by $\mathcal{R}$. 
If we replace in \eqref{eq:ring} the ring $\mathscr{R}$ by the larger $\sigma$-algebra $\mathscr{E}$, then the stable $\sigma$-algebra generated by the stable ring with respect to $\mathscr{E}$ coincides with the one generated by the stable ring with respect to $\mathscr{R}$.  
Indeed, this follows from the observation 
\begin{align*}
\sqcup_{n} L^0_m(F_n)&=L^0_m(\cup_n F_n), \\
L^0_m(F)^\sqsubset&=L^0_m(F^c),
\end{align*}
for all $(F_n)$ and $F$ in $\mathscr{E}$.  
In particular, if $E$ is a separable topological space, by Boolean arithmetic, the stable measurable space $(L^0_m(E),\Sigma(\mathcal{R}))$ is identical with the stable Borel space $(L^0(E),\B(L^0(E)))$, see Examples \ref{exp:algebra}.2) and Remark \ref{r:sep}. 

Recall that a \emph{kernel} on $E$ is a function $\kappa\colon \Omega\times \mathscr{E}\to \overline \R_+ $ such that $\kappa(\omega,F)$ is $\F$-measurable in $\omega\in \Omega$ for fixed $F\in \mathscr{E}$ and a measure in $F\in\mathscr{E}$ for fixed $\omega\in \Omega$. 
A kernel $\kappa$ is said to be a probability kernel if $\kappa(\omega,F)$ is a probability measure in $F\in \mathscr{E}$ for all $\omega\in \Omega$. 
We have the following main result which connects kernels with stable measures. 
\begin{theorem}\label{t:kernel}
\begin{itemize}[fullwidth]
\item[(i)]
For every kernel $\kappa\colon \Omega\times \mathscr{E}\to \overline\R_+$ there exists a stable measure on $(L^0_m(E),\Sigma(\mathcal{R}))$, denoted by $\mu_\kappa$, such that 
\begin{equation*}\label{eq:repkern}
\mu_\kappa(L^0_m(F))(\omega)=\kappa(\omega,F) \text{ a.s.~for all }F\in \mathscr{E}. 
\end{equation*}
If $\kappa$ is a probability kernel, then $\mu_\kappa$ is the unique stable probability measure satisfying the previous equation. 
\item[(ii)] Suppose $(E,\mathscr{E})$ is a standard Borel space. For every stable probability measure $\mu$ on the stable Borel space $(L^0(E),\B(L^0(E)))$ there exists a probability kernel on $E$, denoted by $\kappa_\mu$, such that 
\begin{equation*}\label{eq:repkern}
\mu(L^0(F))(\omega)=\kappa_\mu(\omega,F) \text{ a.s.~for all }F\in \mathscr{E}. 
\end{equation*}
In particular, one has the following reciprocality identities: 
\begin{align*}
\mu_{\kappa_\mu}(V|A)(\omega)&=\mu(V|A)(\omega) \text{ a.s.~for all } V|A\in \B(L^0(E)), \\ 
\kappa_{\mu_\kappa}(\omega,F)&=\kappa(\omega,F) \text{ a.s.~for all }F\in \mathscr{E}. 
\end{align*}
\end{itemize}
\end{theorem}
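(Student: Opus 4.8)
The plan is to build the stable measure in part (i) by first defining it on the stable ring $\mathcal{R}$ and then invoking the conditional Carath\'eodory extension theorem (Theorem \ref{t:caratheodory}), and to obtain part (ii) by a disintegration/regular-conditional-distribution argument combined with the uniqueness statement in part (i). For part (i), given a kernel $\kappa$, I would define on a generating element $(\sum L^0_m(F_k)|A_k)|A \in \mathcal{R}$ the value
\[
\mu_\kappa\big((\textstyle\sum L^0_m(F_k)|A_k)|A\big)(\omega):=\sum_k 1_{A_k\cap A}(\omega)\,\kappa(\omega,F_k)+0\cdot 1_{A^c}(\omega),
\]
i.e.\ on the piece $A_k\cap A$ the function $\omega\mapsto\kappa(\omega,F_k)$, and $0$ off $A$. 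The first task is to check this is well-defined: two representations of the same conditional set in $\mathcal{R}$ differ by refining partitions and by the a.s.-identifications built into $\F$, and since $\kappa(\omega,\cdot)$ is a genuine measure for each $\omega$ and $\kappa(\cdot,F)$ is $\F$-measurable, the value is independent of the representation. Stability of $\mu_\kappa$ (compatibility with $\sum(\cdot)|D_k$, cf.\ \eqref{eq:stabilitypowerset}) is then immediate from the defining formula, as is property (M1) since the value is supported on $A$. Property (M2), $\sigma$-additivity over pairwise disjoint conditional sets in $\mathcal{R}$ whose conditional union lies in $\mathcal{R}$, is where the measure-property of $\kappa(\omega,\cdot)$ does the real work: after passing to a common partition one reduces, $\omega$ by $\omega$, to countable additivity of the classical measure $\kappa(\omega,\cdot)$ on $\mathscr{R}$, together with the fact that conditional unions of the form $\sqcup_n L^0_m(F_n)$ correspond to $L^0_m(\cup_nF_n)$ (the identity displayed just before the statement). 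Thus $\mu_\kappa$ is a stable pre-measure on $\mathcal{R}$, and Theorem \ref{t:caratheodory} extends it to a stable measure on $\Sigma(\mathcal{R})$; since $B_{V|A}=\Omega$ for $V|A\in\mathcal{R}$, the extension restricted to $\mathcal{R}$ is $\mu_\kappa$ itself, giving $\mu_\kappa(L^0_m(F))(\omega)=\kappa(\omega,F)$ a.s. If $\kappa$ is a probability kernel, then $\mu_\kappa(L^0_m(E))=1$, and uniqueness among stable probability measures with the prescribed values on the $L^0_m(F)$, $F\in\mathscr{E}$, follows from Proposition \ref{p:uniqueness}: the collection $\{L^0_m(F):F\in\mathscr{E}\}$ together with its stable hull generates $\Sigma(\mathcal{R})$, is closed under finite conditional intersections (as $L^0_m(F)\sqcap L^0_m(G)=L^0_m(F\cap G)$ modulo the $\emptyset$-piece), contains $X=L^0_m(E)$ of finite measure $1$, and hence is a measure-determining stable $\pi$-system.

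For part (ii), fix a standard Borel $(E,\mathscr{E})$; by Remark \ref{r:sep} we have $L^0_m(E)=L^0(E)$ and $\Sigma(\mathcal{R})=\mathcal{B}(L^0(E))$, so a stable probability measure $\mu$ on $(L^0(E),\mathcal{B}(L^0(E)))$ is in particular a stable set function on $\mathcal{R}$. Define a candidate kernel by $\tilde\kappa(\omega,F):=\mu(L^0(F))(\omega)$ for a fixed version of each representative; the issue is to produce a single function $\kappa_\mu\colon\Omega\times\mathscr{E}\to[0,1]$ that is jointly measurable-in-$\omega$ and a probability measure-in-$F$ for \emph{every} $\omega$. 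Here I would exploit standard Borelness: fix a Borel isomorphism $E\cong$ a Borel subset of $[0,1]$, so it suffices to treat $E\subseteq[0,1]$. On the countable generating field of dyadic-type intervals (more precisely a countable ring $\mathscr{R}_0$ generating $\mathscr{E}$), the map $F\mapsto\mu(L^0(F))(\omega)$ satisfies finite additivity, monotonicity, and the continuity-from-above estimate $\mu(L^0(F_n))\downarrow0$ whenever $F_n\downarrow\emptyset$ in $\mathscr{R}_0$ — the last coming from (M8) applied to $\mu$ (a finite stable measure) together with $\sqcap_n L^0(F_n)=L^0(\cap_nF_n)=\{\ast\}$. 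Since $\mathscr{R}_0$ is countable, we may choose a common exceptional $\p$-null set $N$ outside of which all these (countably many) identities hold simultaneously for a fixed family of versions; for $\omega\notin N$ the set function on $\mathscr{R}_0$ is a classical pre-measure on a countable ring and extends by Carath\'eodory to a Borel probability measure $\kappa_\mu(\omega,\cdot)$ on $[0,1]$ (hence on $E$), and for $\omega\in N$ we set $\kappa_\mu(\omega,\cdot)$ to be an arbitrary fixed probability measure. Measurability of $\omega\mapsto\kappa_\mu(\omega,F)$ for all $F\in\mathscr{E}$ then follows from a monotone-class argument starting from the generators in $\mathscr{R}_0$ where it holds by construction. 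This $\kappa_\mu$ is a probability kernel with $\kappa_\mu(\omega,F)=\mu(L^0(F))(\omega)$ a.s.\ on generators, and by the same $\pi$-$\lambda$ argument as above (now at the level of classical measures, for fixed generic $\omega$) the a.s.\ identity propagates to all $F\in\mathscr{E}$.

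Finally, the two reciprocality identities. For $\kappa_{\mu_\kappa}(\omega,F)=\kappa(\omega,F)$ a.s., note $\kappa_{\mu_\kappa}(\omega,F)=\mu_\kappa(L^0(F))(\omega)=\kappa(\omega,F)$ a.s.\ directly from the defining equations in (i) and (ii). For $\mu_{\kappa_\mu}(V|A)(\omega)=\mu(V|A)(\omega)$ a.s.\ for all $V|A\in\mathcal{B}(L^0(E))$: both $\mu_{\kappa_\mu}$ and $\mu$ are stable probability measures on $(L^0(E),\mathcal{B}(L^0(E)))$, and on the generating stable $\pi$-system $\{L^0(F):F\in\mathscr{E}\}$ they agree a.s., since $\mu_{\kappa_\mu}(L^0(F))(\omega)=\kappa_\mu(\omega,F)=\mu(L^0(F))(\omega)$ a.s.; hence by the uniqueness in Proposition \ref{p:uniqueness} (equivalently, the last sentence of part (i)) they coincide on all of $\mathcal{B}(L^0(E))$.

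I expect the main obstacle to be the measure-theoretic bookkeeping in part (ii): turning the family of a.s.-defined, version-dependent quantities $\mu(L^0(F))(\omega)$ into one honest kernel requires (a) restricting to a countable generating ring so that a single null set suffices, and (b) using standard Borelness to guarantee that the resulting classical pre-measure on that countable ring genuinely extends to a probability measure for each good $\omega$ — this is exactly where the hypothesis ``$(E,\mathscr{E})$ standard Borel'' is indispensable, and it is the conditional analogue of the classical existence theorem for regular conditional distributions. The rest of the proof — well-definedness, stability, (M1), (M2) in part (i), and all uniqueness claims — is a routine translation of classical arguments into the conditional language, using the Boolean arithmetic of $\mathcal{P}(X)$ and the $\pi$-$\lambda$ theorem (Theorem \ref{pilambda}) and extension theorem (Theorem \ref{t:caratheodory}) established earlier.
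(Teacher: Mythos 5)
Your part (i) is essentially the paper's proof: the same formula $\mu_\kappa((\sum L^0_m(F_k)|A_k)|A)=(\sum\kappa(\cdot,F_k)|A_k)|A+0|A^c$ on $\mathcal{R}$, the same reduction of (M2) to pointwise $\sigma$-additivity of $\kappa(\omega,\cdot)$ after matching partitions, extension by Theorem \ref{t:caratheodory}, and uniqueness from Proposition \ref{p:uniqueness}. That half is fine.

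In part (ii) you take a genuinely different route from the paper, and it has a gap at the decisive step. You propose to fix a countable generating ring $\mathscr{R}_0$, verify finite additivity and continuity from above at $\emptyset$ for the set function $F\mapsto\mu(L^0(F))(\omega)$, and then discard ``a common exceptional null set outside of which all these (countably many) identities hold.'' But the continuity-at-$\emptyset$ conditions are indexed by decreasing sequences in $\mathscr{R}_0$, and there are uncountably many such sequences even though $\mathscr{R}_0$ is countable; each condition holds only off its own null set, and you cannot combine uncountably many null sets. This is exactly the classical obstruction to constructing regular conditional distributions, and it is why the paper instead follows Kallenberg's CDF argument: it records only the countably many a.s.\ monotonicity relations $f(\cdot,p)\le f(\cdot,q)$ for rationals $p\le q$ (plus the limits at $\pm\infty$), passes to $F(\omega,x)=\inf_{q>x}f(\omega,q)$, which is \emph{automatically} right-continuous and hence a distribution function for every $\omega$ in the good set, and then reads off a Borel measure with no further conditions to check. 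Your approach can be repaired --- replace ``continuity at $\emptyset$ along all sequences'' by the countably many inner-regularity/compact-class approximation conditions (for each $F\in\mathscr{R}_0$ and rational $\eps>0$, a set with compact closure inside $F$ capturing the mass up to $\eps$), which do imply the pre-measure property via the finite intersection property --- but as written the step fails. A second, minor omission: after transporting to a Borel subset $S$ of $\R$ (or $[0,1]$), the measure constructed for a given $\omega$ may charge $S^c$; the paper corrects this on the null set $\{\kappa(\cdot,S)<1\}$ by substituting a Dirac mass at a point of $S$, and your version needs the same patch. Your derivation of the two reciprocality identities from (i), (ii) and Proposition \ref{p:uniqueness} agrees with the paper.
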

\begin{proof}
\begin{itemize}[fullwidth]
\item[(i)] For $(\sum L^0_m(F_k)|A_k)|A\in \mathcal{R}$, define 
\begin{equation}\label{eq:kerntosmeas}
\mu_\kappa((\sum L^0_m(F_k)|A_k)|A):=(\sum \kappa(\cdot,F_k)|A_k)|A + 0|A^c. 
\end{equation}
If $\mu_\kappa$ is a stable pre-measure, then the first claim follows from Theorem \ref{t:caratheodory}. 
(M1) is satisfied by definition. 
As for (M2), let $((\sum L^0_m(F_{k,n})|A_{k,n})|A_n)$ be a sequence of pairwise disjoint elements in $\mathcal{R}$ such that $\sqcup_n (\sum L^0_m(F_{k,n})|A_{k,n})|A_n\in \mathcal{R}$. 
Denoting $A:=\cup_n A_n$, the conditional union must be of the form 
\[
\sqcup_n (\sum L^0_m(F_{k,n})|A_{k,n})|A_n=(\sum L^0_m(F_k)|A_k)|A. 
\]
Since the above sequence is pairwise disjoint, each $F_k$ is the disjoint union of all $F_{k,n}$ with $\emptyset \neq A\cap A_k\subseteq A_n\cap A_{k,n}$. 
Thus (M2) follows from the pointwise $\sigma$-additivity of the kernel $\kappa$. 
The second claim follows immediately from Proposition \ref{p:uniqueness}. 
\item[(ii)] The proof can be carried out similarly to the one of the existence of regular conditional distributions. 
We will follow the main arguments in \cite[Theorem 6.3]{Kallenberg2002}. 

By \cite[Theorem A1.2]{Kallenberg2002}, there exists a Borel isomorphism from $E$ to a Borel subset  $S$ of $\mathbb{R}$. 
Therefore, it is enough to prove the claim for the Borel space $(S,\mathscr{B}(S))$. 
For each $q\in \mathbb{Q}$, let $f_q=f(\cdot,q)\colon \Omega \to [0,1]$ be defined by 
\begin{equation}\label{eq:3}
f(\cdot,q)=\mu(L^0(]-\infty,q]))\text{ a.s.} 
\end{equation}
By (M4), one has $f(\cdot, p)\leq f(\cdot,q)$ whenever $p\leq q$. Let $A$ be the set of all $\omega\in \Omega$ such that $f(\omega,q)$ is  increasing in $q\in \mathbb{Q}$ with limits  $1$ and $0$ at $\pm\infty$. 
Since $A$ is specified by countably many measurable conditions each of which holds a.s., we have $A\in\F$.  
Define 
\[
 F(\omega,x):=1_{A}(\omega)\inf_{q>x}f(\omega,q)+ 1_{A^c}(\omega)1_{\{x\geq 0\}}, \quad x\in \mathbb{R}, \, \omega\in \Omega. 
\]
From (M4) and (M8) it follows that $F(\omega,\cdot)$ is a distribution function for every $\omega\in \Omega$. 
Hence, by \cite[Proposition 2.14]{Kallenberg2002}, there exists a probability measure $\kappa(\omega,\cdot)$ such that $\kappa(\omega,]-\infty,x])=F(\omega,x)$ for all $x\in \mathbb{R}$ and $\omega\in \Omega$.
By a monotone class argument, $\kappa$ is a probability kernel. 
Using a monotone class argument based on an a.s.~interpretation of (M5) and (M7) shows that $\kappa(\cdot,F)=\mu(L^0_m(F))$ for all $F\in\mathscr{B}(\R)$. 
In particular, $\kappa(\cdot,S^c)=0$ a.s., and thus $\kappa_\mu(\cdot,F)=\mu(L^0_m(F))$ for all $F\in\mathscr{B}(S)$, where $\kappa_\mu$ is the probability kernel defined by 
\[
\kappa_\mu(\omega,\cdot):=\kappa(\omega,\cdot)1_{\{\kappa(\omega,S)=1\}}+\delta_s1_{\{\kappa(\omega,S)<1\}}, 
\]
where $s\in S$ is arbitrary. 
Uniqueness follows from \eqref{eq:3} by a monotone class argument. 
The reciprocality identities are an immediate consequence of the constructions of $\kappa_\mu$ and $\mu_\kappa$. 
\end{itemize}
\end{proof}
The first part of the previous theorem provides a procedure to construct stable measures from classical ones as follows. 
\begin{examples}
Let $\lambda$ be a $\sigma$-finite measure on $(E,\mathscr{E})$, and view $\lambda$ as a constant kernel on $\Omega\times \mathscr{E}$. 
Let $\mu_\lambda$ be the induced stable measure on $(L^0_m(E),\Sigma(\mathcal{R}))$ as in Theorem \ref{t:kernel}. 
Let $(F_n)$ be an increasing sequence in $\mathscr{E}$ such that $\cup_n F_n=E$ and $\lambda(F_n)<\infty$ for all $n$. 
Then $\sqcup_n L^0_m(F_n)=L^0(E)$ and $\mu_\lambda(L^0_m(F_n))<\infty$ for all $n$, which implies that $\mu_\lambda$ is a $\sigma$-finite stable measure. 
By Proposition \ref{p:uniqueness}, $\mu_\lambda$ is the unique stable measure extending the classical measure $\lambda$ from $(E,\mathscr{E})$ to $(L^0_m(E),\Sigma(\mathcal{R}))$. 
In particular, if $(\R^d ,\mathscr{B}(\R^d),\lambda)$ is the classical Borel space with $\lambda$ the $d$-dimensional Lebesgue measure, then the extension $\mu_\lambda$ to the stable Borel space $((L^0)^d,\B((L^0)^d))$ is called the \emph{stable $d$-dimensional Lebesgue measure}. 
\end{examples}
Next, we study the link to the conditional distribution of random variables. 
For the remainder of this section, let $\G\subseteq\F$ be a sub-$\sigma$-algebra and $\xi:\Omega\to E$ a random variable. 
Recall that a regular conditional distribution of $\xi$ given $\G$ is a version of the function $\mathbb{P}[\xi \in \cdot|\G]$ on $\Omega\times \mathscr{E}$ which is a probability kernel. 
It is well known that such a representing probability kernel exists whenever $(E,\mathscr{E})$ is a standard Borel space, see e.g.~\cite[Theorem 6.3]{Kallenberg2002}. 
In view of Theorem \ref{t:kernel}, we can extend this representation result to arbitrary spaces as follows. 
\begin{example} 
All objects will be defined relative to the probability space $(\Omega,\G,\mathbb{P})$, e.g.~$\G$-stable sets, $\G$-stable conditional power sets, $\G$-stable conditional set operations, etc. 
Let $L^0_{m,\G}(E)$ denote the space of all $\G$-measurable functions $x\colon \Omega\to E$. 
For a sequence $(F_k)$ in $\mathscr{E}$, a partition $(A_k)$ with $A_k\in \G$ for all $k$, define 
\[
\sum L^0_{m,\G}(F_k)|A_k:=\{x\in L^0_{m,\G}(E)\colon x|A_k(\omega)\in F_k \text{ a.s.~for all } k\}.
\]
Let $\mathcal{R}_\G$ be the $\G$-stable ring on $L^0_{m,\G}(E)$ consisting of all $(\sum L^0_{m,\G}(F_k)|A_k)|A$, where $(F_k)$ is a sequence in $\mathscr{E}$, $(A_k)$ is a partition with $A_k\in \G$ for all $k$ and $A\in\G$. 
We denote by $\Sigma_\G(\mathcal{R}_\G)$ the smallest $\G$-stable $\sigma$-algebra on $L^0_{m,\G}(E)$ including $\mathcal{R}_\G$. 
Then there exists a unique $\G$-stable probability measure on $(L^0_{m,\G}(E),\Sigma_\G(\mathcal{R}_\G))$, denoted by $\nu=\nu_{\xi,\G}$,  such that 
\begin{equation}\label{eq:repkern2}
\mathbb{P}[\xi\in F|\G](\omega)=\nu(L^0_{m,\G}(F))(\omega) \text{ a.s.~for all }F\in \mathscr{E}. 
\end{equation}
Indeed, put 
\[
\nu((\sum L^0_{m,\G}(F_k)|A_k)|A):=(\sum \mathbb{P}[\xi \in F_k|\G]|A_k)|A + 0|A^c, \quad (\sum L^0_{m,\G}(F_k)|A_k)|A\in \mathcal{R}_\G, 
\]
and proceed similarly to the first part of the proof of Theorem \ref{t:kernel}. 
\end{example}
We want to compute the conditional expectation of $f(\xi)$ with the help of the stable Lebesgue integral on applying the representation \eqref{eq:repkern2}. 
Fix a Borel measurable function $f\colon E\to \mathbb{R}$ such that $\mathbb{E}[|f(\xi)|]<\infty$. 
Recall that the distribution of $\xi$ is defined by the pushforward measure $\mathbb{P}_\xi:=\mathbb{P}\circ \xi^{-1}$ on $(E,\mathscr{E})$. 
By the transformation theorem, it holds 
\begin{equation}\label{eq:factorization}
\mathbb{E}[f(\xi)]=\int_E f d \mathbb{P}_\xi. 
\end{equation}
If $(E,\mathscr{E})$ is a standard Borel space, then we have the following conditional analogue of \eqref{eq:factorization}:  
\begin{equation}\label{eq:intrep}
\mathbb{E}[f(\xi)|\G](\omega)=\int f(x)\kappa_\xi(\omega,dx) \text{ a.s.},
\end{equation}
where $\kappa_\xi$ is a regular conditional distribution of $\xi$ given $\G$. 
We can extend the representation \eqref{eq:intrep} to spaces which are not necessarily standard Borel in the following way. 
\begin{theorem}\label{p:conddistrep}
Let $f\colon E\to\R$ be a Borel measurable function with $\mathbb{E}[|f(\xi)|]<\infty$. Then $f$ extends to a $\G$-stable integrable function $\hat{f}\colon L^0_{m,\G}(E)\to L^0_\G$ such that 
\begin{equation}\label{eq:intrepext}
\mathbb{E}[f(\xi)|\G](\omega)=\left(\int_{L^0_{m,\G}(E)} \hat{f} d\nu_{\xi,\G}\right)(\omega) \text{ a.s. }
\end{equation} 
\end{theorem}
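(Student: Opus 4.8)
The plan is to build $\hat f$ from $f$ in the obvious canonical way, namely by post-composition: for $x\in L^0_{m,\G}(E)$ set $\hat f(x):=f\circ x$, which is a $\G$-measurable real-valued function on $\Omega$, hence an element of $L^0_\G$. One checks immediately that $\hat f$ is a $\G$-stable function, since composition commutes with concatenation along a $\G$-partition: $\hat f(\sum x_k|A_k)=f\circ(\sum x_k|A_k)=\sum (f\circ x_k)|A_k=\sum \hat f(x_k)|A_k$. The first task is to verify that $\hat f$ is stably measurable from $(L^0_{m,\G}(E),\Sigma_\G(\mathcal R_\G))$ to $(L^0_\G,\B(L^0_\G))$. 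By the measurability criterion in the remark following Theorem~\ref{pilambda}, it suffices to check $\hat f^{-1}(]-\infty,r]\,)\in\Sigma_\G(\mathcal R_\G)$ for the generating intervals of $\B(L^0_\G)$, and by a further stability/monotone-class reduction it is enough to treat $r$ of the form $r=q1_\Omega$, $q\in\Q$. For such $r$ one has, essentially by unwinding the definition of the conditional pre-image \eqref{eq:preimage}, that $\hat f^{-1}(]-\infty,q]\,)=L^0_{m,\G}(f^{-1}(]-\infty,q]))\in\mathcal R_\G$ since $f^{-1}(]-\infty,q])\in\mathscr E$. This gives stable measurability. Integrability of $\hat f$ will follow once \eqref{eq:intrepext} is established for $|f|$ in place of $f$, together with $\e[|f(\xi)|]<\infty$; concretely, $\int \hat f^{\pm}\,d\nu_{\xi,\G}$ will be identified below with $\e[f(\xi)^{\pm}|\G]\in L^0_\G$.

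The heart of the argument is the integral identity \eqref{eq:intrepext}, and the natural route is the standard measure-theoretic bootstrap: indicators, then elementary functions, then monotone limits, then differences. For $f=1_F$ with $F\in\mathscr E$ we have $\hat f=1_{L^0_{m,\G}(F)}$ as a stable indicator function on $L^0_{m,\G}(E)$ (this is a direct check from the definition of $A_x$ preceding (D1), since $x|A'\in L^0_{m,\G}(F)|A'$ iff $x(\omega)\in F$ for a.a.\ $\omega\in A'$), so $\int \hat f\,d\nu_{\xi,\G}=\nu_{\xi,\G}(L^0_{m,\G}(F))=\p[\xi\in F|\G]=\e[1_F(\xi)|\G]$ a.s., where the middle equality is \eqref{eq:repkern2}. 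By (M1)/(D-properties) and $\G$-stability this extends to $f$ a step function $\sum c_k 1_{F_k}$ with $(F_k)$ a measurable partition of $E$, giving $\hat f=\sum c_k 1_{L^0_{m,\G}(F_k)}$ a stable elementary function and $\int\hat f\,d\nu_{\xi,\G}=\sum c_k\,\p[\xi\in F_k|\G]=\e[f(\xi)|\G]$ a.s. For general $f\ge 0$ Borel, pick an increasing sequence of nonnegative Borel step functions $f_n\uparrow f$ pointwise on $E$; then $\hat f_n\uparrow\hat f$ pointwise on $L^0_{m,\G}(E)$ in the sense of \eqref{eq:convergence}, so the stable monotone convergence theorem (Theorem~\ref{t:monotone convergence}) gives $\int\hat f\,d\nu_{\xi,\G}=\sup_n\int\hat f_n\,d\nu_{\xi,\G}=\sup_n\e[f_n(\xi)|\G]$, while the classical monotone convergence theorem for conditional expectations gives $\sup_n\e[f_n(\xi)|\G]=\e[f(\xi)|\G]$ a.s.; hence \eqref{eq:intrepext} holds for nonnegative $f$ (both sides possibly in $\bar L^0_+$). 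Finally, for $f$ with $\e[|f(\xi)|]<\infty$, apply the nonnegative case to $f^+$ and $f^-$: this yields $\int\hat f^{\pm}\,d\nu_{\xi,\G}=\e[f^{\pm}(\xi)|\G]\in L^0_\G$, so $\hat f=\hat f^{+}-\hat f^{-}=\widehat{f^+}-\widehat{f^-}$ is $\nu_{\xi,\G}$-integrable, and subtracting gives \eqref{eq:intrepext} by linearity (I3) of the stable integral and linearity of conditional expectation.

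The main obstacle I expect is not any single deep step but the careful matching of two a.s.-defined objects: $\e[f(\xi)|\G]$ is only defined up to a null set, and \eqref{eq:intrepext} asserts equality a.s.\ of $\omega$-functions. One must be a little careful that the null sets accumulated through the (countably many) approximation steps — choosing $f_n$, invoking \eqref{eq:repkern2} for each $F_k$, applying the two monotone convergence theorems — can be swept into a single exceptional null set, which is fine because only countably many conditions are involved; this is exactly the kind of "sweep countably many a.s.\ conditions into one" bookkeeping already used, e.g., in the construction of the set $A$ in the proof of Theorem~\ref{t:kernel}(ii). A secondary point requiring attention is checking that $\hat f^{\pm}$ really equals $\widehat{f^{\pm}}$ and that the pointwise-in-$x$ decomposition $f=f^+-f^-$ on $E$ transports to the decomposition of $\hat f$ used by the stable integral's definition; this is routine since $(f\circ x)^{\pm}=f^{\pm}\circ x$ pointwise on $\Omega$ for each $x$.
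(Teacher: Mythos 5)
Your proposal is correct and follows essentially the same route as the paper's proof: define $\hat f(x):=f\circ x$, check $\G$-stability, and run the standard bootstrap (indicators, stable elementary functions via \eqref{eq:repkern2} and (I3), then monotone limits combining Theorem~\ref{t:monotone convergence} with the classical conditional monotone convergence theorem, then the decomposition $f=f^+-f^-$). The only difference is that you spell out the stable measurability check on generating intervals and the $f^{\pm}$ bookkeeping, which the paper dispatches with ``by inspection'' and ``w.l.o.g.\ $f\geq 0$''.
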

\begin{proof}
By inspection, $\hat{f}\colon L^0_{m,\G}(E)\to L^0_\G$ given by $\hat{f}(x):=f\circ x$ is well-defined and $\G$-stable. 
We show that $\hat{f}$ satisfies \eqref{eq:intrepext}. 
We can assume w.l.o.g.~that $f\geq 0$. 
First, suppose $f=\sum_{k\leq n} x_k 1_{F_k}$. 
Then $\hat{f}$ is equal to the $\G$-stable elementary  function $\sum_{k\leq n} x_k 1_{L^0_{m,\G}(F_k)}$. 
It follows from (I3) and \eqref{eq:repkern2} that 
\begin{equation}\label{eq:simple}
\mathbb{E}[f(\xi)|\G](\omega)=\sum_{k\leq n} x_k \nu(L^0_{m,\G}(F_k))(\omega)=\left(\int_{L^0_{m,\G}(E)} \hat{f} d\nu_{\xi,\G}\right)(\omega) \text{ a.s. }
\end{equation}
Second, suppose $f\geq 0$.
Let $(f_n)$ be an increasing sequence of simple functions such that $f=\lim f_n$.  
Then $(\hat{f}_n)$ is an increasing sequence of  stable elementary  functions such that $\hat{f}$ is the pointwise limit of $(\hat{f}_n)$. 
By the monotone convergence theorem for conditional expectations and Theorem \ref{t:monotone convergence}, we obtain from \eqref{eq:simple}
\[
\mathbb{E}[f(\xi)|\G](\omega)=\lim_n \mathbb{E}[f_n(\xi)|\G](\omega)=\lim_n\left(\int_{L^0_{m,\G}(E)} \hat{f}_n d\nu_{\xi,\G}\right)(\omega)=\left(\int_{L^0_{m,\G}(E)} \hat{f} d\nu_{\xi,\G}\right)(\omega) \text{ a.s. }
\]
\end{proof}
\section{A conditional version of Fubini's theorem, the Radon-Nikod\'ym theorem, the Daniell-Stone theorem and the Riesz representation theorem}\label{s:theorems}
In this section, we establish four important theorems in measure theory for stable measure spaces. 
We start with Fubini's theorem and the Radon-Nikod\'ym theorem, and close with the Daniell-Stone theorem from which we derive two Riesz type represenation and regularity results. 
\subsection{Fubini's theorem.}
Throughout this subsection, let $(X,\X,\mu)$ and $(Y,\Y,\nu)$ be two stable probability spaces. 
On the stable set $X\times Y$ of functions on $\Omega$ consider the \emph{conditional rectangles} $V|A\times W|B:=V\times W|A\cap B$ where $V|A\in \X$ and $W|B\in \Y$. 
Inspection shows
\[
V_1|A_1\times W_1|B_1\sqcap V_2|A_2\times W_2|B_2=V_1|A_1\sqcap V_2|A_2 \times W_1|B_1 \sqcap W_2|B_2, 
\]
which implies that the collection 
\begin{equation}\label{eq:prodgenerator}
\mathcal{E}:=\{V|A\times W|B\colon V|A\in\mathcal{X},W|B\in\mathcal{Y}\} 
\end{equation}
is a stable collection in $\P(X\times Y)$ closed under finite conditional intersections. 
We define the \emph{stable product $\sigma$-algebra} of $\X$ and $\Y$ to be the stable $\sigma$-algebra generated by 
$\mathcal{E}$, and denote it by $\mathcal{X} \otimes \mathcal{Y}$. 
Let $V|A\sqsubseteq X\times Y$ and $x\in X$. 
The \emph{conditional $x$-section} of $V|A$ is defined to be the conditional set 
\begin{equation}\label{eq:section}
(V|A)_x:=W|D_\ast,   
\end{equation}
where   
\begin{align*}
D_\ast&:=\sup\{A^\prime\in \F\colon A^\prime\subseteq A, \text{ there is } y\in Y \text{ such that } (x,y)|A^\prime\in V|A^\prime\}, \\
W&:=\{y\in Y\colon (x,y)|D_\ast\in V|D_\ast\}. 
\end{align*}
Since $D_\ast$ is attained by an exhaustion argument, $W$ is a stable subset of $Y$ by stability of $V$. 
Similarly, we define the \emph{conditional $y$-section} of $V|A$ for some $y\in Y$. 
For a function $f\colon X\times Y\to E$, we denote by $f_x$ and $f_y$ its $x$- and $y$-section, respectively, which are defined classically as $f_x(y)=f_y(x)=f(x,y)$. 
If $f$ is a stable function, then $f_x$ and $f_y$ are stable functions as well.  
We have the following useful properties which we state for (conditional) $x$-sections; analogous properties hold for $y$-sections. 
\begin{proposition}\label{p:elementaryprop2}
Let $Z|C\in \P(X\times Y)$, $(Z_i|C_i)_{i\in I}$ and $(Z_{k}|A_k)_{k\in \N}$ be families in $\mathcal{P}(X\times Y)$, where $I$ is an arbitrary  nonempty  index set.  
Let $f\colon X\times Y\to L^0$ be a function. 
Let $x\in X$, $(x_k)$ be a sequence in $X$ and $(D_k)$ a partition. 
Then the following holds true. 
\begin{itemize}
\item[(F1)] $(\sum (Z_{k}|A_k)|D_k)_{x}=\sum (Z_{k}|A_k)_x|D_k$; 
\item[(F2)] $(Z|C)_{\sum x_k|D_k}=\sum (Z|C)_{x_k}|D_k$; 
\item[(F3)] $(\sqcup_i Z_i|C_i)_{x}=\sqcup_i (Z_i|C_i)_{x}$; 
\item[(F4)] $((Z|C)_x)^\sqsubset=((Z|C)^\sqsubset)_{x}$; 
\item[(F5)] if $Z|C\in \X\otimes \Y$, then $(Z|C)_x\in \Y$;
\item[(F6)] if $f$ is conditionally $\X\otimes \Y$-measurable, then $f_x$ is conditionally $\Y$-measurable. 
\item[(F7)] for $Z|C\in \X\otimes \Y$, the function $x\mapsto \nu((Z|C)_x)$ is conditionally $\X$-measurable. 
\end{itemize}
\end{proposition}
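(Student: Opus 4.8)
The plan is to establish (F1)--(F4) by unwinding the definition \eqref{eq:section} of the conditional section, then to derive (F5) by a Dynkin $\pi$--$\lambda$ argument, to deduce (F6) from (F5), and to obtain (F7) by combining stability with a second $\pi$--$\lambda$ argument. For (F1) and (F2) the mechanism is an exhaustion argument showing that the support set $D_\ast$ appearing in \eqref{eq:section} is local. Writing $\sum(Z_k|A_k)|D_k=(\sum Z_k|D_k)|\cup_k(D_k\cap A_k)$ by \eqref{eq:stabilitypowerset}, and using that $(x,y)|A'\in(\sum Z_k|D_k)|A'$ precisely when $(x,y)|(A'\cap D_k)\in Z_k|(A'\cap D_k)$ for every $k$, one checks $D_\ast=\cup_k(D_{k,\ast}\cap D_k)$, where $D_{k,\ast}$ is the analogous supremum for the $x$-section of $Z_k|A_k$; the necessary witnesses $y$ are patched along $(D_k)$ using stability of the $Z_k$, and the section sets decompose the same way. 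Property (F2) is proved identically, using the already recorded identity $A_{\sum x_k|D_k}=\cup_k(A_{x_k}\cap D_k)$ and the stability in the point variable of the predicate defining $D_\ast$.

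For (F3) one invokes the explicit construction $\sqcup_i Z_i|C_i=(\sqcup_i Z_i)|\sup_i C_i$ from \eqref{eq:union}: the section support is identified with the corresponding supremum and the section set with the conditional union of the sections, one of the two inclusions being handled by the monotonicity of sections, which is immediate from \eqref{eq:section}. Property (F4) is the main obstacle, since $(Z|C)^\sqsubset$ is complemented in $X\times Y$ whereas $((Z|C)_x)^\sqsubset$ is complemented in the smaller ambient set $Y$, and the three support sets involved --- $B_\ast$ from the representation \eqref{eq:repcomplement} of $(Z|C)^\sqsubset$, the section support $D_\ast$ of $Z|C$ at $x$, and the support $E_\ast$ of the complement of $(Z|C)_x$ in $Y$ --- must be shown to cohere. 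Writing $(Z|C)^\sqsubset=Z'|B_\ast+(X\times Y)|C^c$ and computing its $x$-section piecewise, one checks that the section equals $Y$ on $C^c$ and on the part of $C$ where $Z$ already agrees with $X\times Y$, while on the remaining set the formula defining $Z'$ restricts to the formula defining the complement of the section set; the maximality of each of $B_\ast$, $D_\ast$, $E_\ast$ together with an exhaustion argument of the kind used in the proof of Theorem \ref{t:mainthm} forces the identity.

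With (F1)--(F4) available, (F5) follows: the collection $\mathcal{M}:=\{Z|C\in\P(X\times Y)\colon (Z|C)_x\in\Y\text{ for all }x\in X\}$ is stable by (F1), contains $X\times Y$ and every rectangle --- a direct computation from \eqref{eq:section} gives $(V|A\times W|B)_x=W|(A_x\cap B)\in\Y$, the required witness $y$ being available because $W$ is a nonempty stable set --- and is closed under conditional complementation by (F4) and under countable conditional unions of pairwise disjoint families by (F3); hence $\mathcal{M}$ is a stable Dynkin system containing the $\pi$-system $\mathcal{E}$ of \eqref{eq:prodgenerator}, so $\X\otimes\Y\subseteq\mathcal{M}$ by Theorem \ref{pilambda}. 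For (F6) one establishes the identity $(f^{-1}(W|A))_x=f_x^{-1}(W|A)$ for all $W|A\in\B(L^0)$ --- equivalently, one factors $f_x=f\circ\iota_x$ through the conditionally measurable inclusion $\iota_x\colon Y\to X\times Y$, $y\mapsto(x,y)$, whose measurability on the generator $\mathcal{E}$ is exactly the rectangle computation above --- and then applies (F5).

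Finally, for (F7), stability of $x\mapsto\nu((Z|C)_x)$ is immediate from (F2), \eqref{eq:stabilitypowerset} and the stability of the set function $\nu$, and conditional $\X$-measurability is proved by a $\pi$--$\lambda$ argument on $\mathcal{H}:=\{Z|C\in\X\otimes\Y\colon x\mapsto\nu((Z|C)_x)\text{ is conditionally }\X\text{-measurable}\}$. On a rectangle, $\nu((V|A\times W|B)_x)=\nu(W|(A_x\cap B))=\nu(W|B)\,1_{V|A}(x)$ by (M1) and the stability of $\nu$, which is conditionally measurable because the stable indicator $1_{V|A}$ of $V|A\in\X$ is; the collection $\mathcal{H}$ is closed under conditional complementation by (F4) and (M5), since then $\nu(((Z|C)^\sqsubset)_x)=\nu(Y)-\nu((Z|C)_x)$, and under countable conditional unions of pairwise disjoint families by (F3) and (M2), the sections of disjoint sets being disjoint by (F4) and monotonicity. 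Thus $\mathcal{H}$ is a stable Dynkin system containing $\mathcal{E}$, and $\mathcal{H}=\X\otimes\Y$ by Theorem \ref{pilambda}.
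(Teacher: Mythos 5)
Your proposal is correct, and for (F1)--(F3) and (F5)--(F7) it follows essentially the same route as the paper: the locality of the support set $D_\ast$ in \eqref{eq:section} for (F1)--(F2), the countable-subfamily-plus-witness-patching argument via \eqref{eq:union} for the nontrivial inclusion in (F3), and generator/Dynkin arguments for (F5) and (F7) with exactly the same rectangle computations $(V|A\times W|B)_x=W|(A_x\cap B)$ and $\nu((V|A\times W|B)_x)=\nu(W|B)1_{V|A}(x)$. The one place you genuinely diverge is (F4): you attack the complement head-on through its representation \eqref{eq:repcomplement}, computing the $x$-section of $Z'|B_\ast+(X\times Y)|C^c$ piecewise and reconciling the three maximal support sets by exhaustion. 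That can be made to work, but it is the heaviest part of your write-up and is the one step you leave at the level of ``maximality forces the identity.'' The paper instead gets (F4) almost for free from (F3): applying (F3) to the decomposition $X\times Y=Z|C\sqcup(Z|C)^\sqsubset$ gives $Y=(Z|C)_x\sqcup((Z|C)^\sqsubset)_x$, and the two sections are disjoint because a common restriction $y|D'$ would produce $(x,y)|D'$ lying in both $Z|C$ and $(Z|C)^\sqsubset$; since in the Boolean algebra $\P(Y)$ a disjoint pair with union $Y$ determines the complement, the identity follows with no reference to \eqref{eq:repcomplement} at all. Your route buys an explicit description of the complemented section; the paper's buys brevity and avoids the delicate coherence check of $B_\ast$, $D_\ast$, $E_\ast$. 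If you keep your version, you should spell out that coherence argument, since it is where all the content of (F4) sits.
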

\begin{proof}
The statements (F1) and (F2) can easily be verified from the definitions. 
\begin{itemize}
\item[(F3)] Suppose $\sqcup_i Z_i|C_i=Z|C$ where $C=\sup_i C_i$. Let  
\begin{align*}
D_\ast&=\sup \{D^\prime\in\F\colon D^\prime\subseteq C, \text{ there is }y\in Y \text{ such that }(x,y)|D^\prime\in Z|D^\prime\},\\
D^i_\ast&=\sup \{D^\prime\in \F\colon D^\prime \subseteq C_i, \text{ there is }y\in Y \text{ such that }(x,y)|D^\prime\in Z_i|D^\prime\}. 
\end{align*}
By definition of the conditional union (see \eqref{eq:union}), one has $D_\ast \subseteq \sup_i D^{i}_\ast$. 
To see the converse inclusion, let $(D_{i_k})$ be such that $\sup_k D_{i_k}=\sup_i D_i$. 
Let $(y_{i_k})$ be a sequence in $Y$ such that $(x,y_{i_k})|D_{i_k}\in Z_{i_k}|D_{i_k}$ for all $k$. 
Let $C_1=D_{i_1}$ and $C_k=C_{i_k}\cap (C_1\cup\ldots \cup C_{i_{k-1}})^c$ for $k\geq 2$. 
For $y=\sum y_{i_k}|C_k + y_0|(\sup_k D_{i_k})^c$, where $y_0\in Y$ is arbitrary, one has $(x,y)|\sup_k D_{i_k}\in Z|\sup_k D_{i_k}$, which shows $\sup_i D_i=\sup_k D_{i_k}\subseteq D$. 
We conclude that $(Z|C)_x=\sqcup_i (Z_i|C_i)_x$. 
\item[(F4)] By (F3), 
\begin{equation}\label{eq:s10}
Y=(X\times Y)_x=(Z|C\sqcup (Z|C)^\sqsubset)_x=(Z|C)_x\sqcup ((Z|C)^\sqsubset)_x. 
\end{equation}
Suppose $(Z|C)^\sqsubset=W|B$. 
Let 
\begin{align*}
D^1_\ast&=\sup\{D^\prime\in\F\colon D^\prime\subseteq C, \text{ there is }y\in Y \text{ such that }(x,y)|D^\prime\in Z|D^\prime\}, \\
D^2_\ast&=\sup\{D^\prime\in\F\colon D^\prime\subseteq B, \text{ there is }y\in Y \text{ such that }(x,y)|D^\prime\in W|D^\prime\}. 
\end{align*}
From \eqref{eq:s10} it follows $D^1_\ast\cup D^2_\ast=\Omega$. 
Also, it holds $D_1\cap D_2=\emptyset$ since otherwise $Z|C\sqcap (Z|C)^\sqsubset\neq \{\ast\}$ which contradicts the complementation law in the Boolean algebra $\mathcal{P}(X)$. 
We conclude that the conditional union $Y=(Z|C)_x\sqcup ((Z|C)^\sqsubset)_x$ is a disjoint conditional union, and this implies $((Z|C)_x)^\sqsubset=((Z|C)^\sqsubset)_x$. 
\item[(F5)] Let $\mathcal{Z}$ be the collection of all $Z|C\in \X\otimes \Y$ such that $(Z|C)_x\in \Y$. Then $\mathcal{Z}$ includes all conditional rectangles $V|A\times W|B$ where $V|A\in \X$ and $W|B\in\Y$. 
By (F3) and (F4), $\mathcal{Z}$ is closed under conditional complementation and countable conditional unions. Since $\Y$ is a stable collection, we conclude that $\mathcal{Z}$ is a stable $\sigma$-algebra. 
Since $\mathcal{Z}$ includes the stable generator $\E$ (see \eqref{eq:prodgenerator}) and $\X\otimes \Y$ is the smallest stable $\sigma$-algebra including $\E$, it follows that $\mathcal{Z}=\X\otimes \Y$. 
\item[(F6)] By (F5), $f^{-1}_x(]r,\infty[)=(f^{-1}(]r,\infty[))_x\in \Y$ for all $r\in L^0$. 
\item[(F7)] By (F2) and the stability of $\nu$, the function $x\mapsto \nu((Z|C)_x)$ is stable. 
Let 
\begin{equation*}
\mathcal{D}=\{Z|C\in\mathcal{X}\otimes\mathcal{Y}\colon x\mapsto \nu((Z|C)_x) \text{ is stably $\X$-measurable}\}.
\end{equation*}
From $X\times Y\in \mathcal{D}$ and (F1) it follows that $\mathcal{D}$ is a stable collection. 
From (F3) and (M2), it follows that $\mathcal{D}$ is closed under taking countable conditional unions of pairwise disjoint conditional sets. 
From (F4) and (M5), $\mathcal{D}$ is also closed under conditional complementation. 
Thus $\mathcal{D}$ is a stable Dynkin system. 
Since for $V|A\in \X$ and $W|B\in \Y$ the function $x\mapsto \nu((V|A\times W|B)_x)=\nu(W|B)1_{V|A}(x)$ is stably $\X$-measurable, $\mathcal{D}=\X\otimes \Y$ due to Theorem \ref{pilambda}. 
\end{itemize}
\end{proof}
\begin{lemma}\label{p:prodmeas}
There exists a unique stable probability measure $\lambda$ on $\mathcal{X}\otimes \mathcal{Y}$ such that 
\[
\lambda(V|A\times W|B)=\mu(V|A)\nu(W|B)
\]
for all $V|A\in \mathcal{X}$ and $W|B\in\mathcal{Y}$. 
Moreover, for all $Z|C\in\mathcal{X}\otimes\mathcal{Y}$, one has  
\begin{align*}
\lambda(Z|C)=\int_X s^X_{Z|C} d\mu=\int_Y s^Y_{Z|C} d\nu, 
\end{align*}
where $s^X_{Z|C}(x):=\nu((Z|C)_x)$, $x\in X$, and $s^Y_{Z|C}(y):=\mu((Z|C)_y)$, $y\in Y$. 
\end{lemma}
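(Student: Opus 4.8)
The plan is to follow the classical route to the product measure, adapted to the stable setting, using the $\pi$-$\lambda$ theorem (Theorem \ref{pilambda}) as the main structural tool. First I would establish that the map $x\mapsto s^X_{Z|C}(x)=\nu((Z|C)_x)$ is a well-defined stably $\X$-measurable function from $X$ to $\bar L^0_+$; this is exactly (F7) of Proposition \ref{p:elementaryprop2}, together with (F5) which guarantees $(Z|C)_x\in\Y$ so that $\nu((Z|C)_x)$ makes sense. Hence the integral $\int_X s^X_{Z|C}\,d\mu$ is defined for every $Z|C\in\X\otimes\Y$, and symmetrically for $\int_Y s^Y_{Z|C}\,d\nu$. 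I would then \emph{define} $\lambda(Z|C):=\int_X s^X_{Z|C}\,d\mu$ and verify that this is a stable measure. Stability follows from (F2) and the stability of $\nu$ and of the integral (property (I1)); countable additivity (M2) follows from (F3), which turns a disjoint conditional union into a disjoint conditional union of sections, combined with (M2) for $\nu$ and the monotone convergence theorem (Theorem \ref{t:monotone convergence}) to pull the infinite sum out of $\int_X$. Property (M1) is immediate from (A1)-type bookkeeping on the null set $A^c$, and $\lambda(X\times Y)=\int_X\nu(Y)\,d\mu=\mu(X)=1$, so $\lambda$ is a stable probability measure.

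Next I would check the product formula on rectangles: for $V|A\in\X$ and $W|B\in\Y$, the section $(V|A\times W|B)_x$ equals $W|B$ when $x\in V|A$ (in the appropriate local sense) and is empty otherwise, so $s^X_{V|A\times W|B}(x)=\nu(W|B)\,1_{V|A}(x)$, exactly as computed in the proof of (F7). Integrating over $X$ gives $\int_X\nu(W|B)\,1_{V|A}\,d\mu=\nu(W|B)\,\mu(V|A)$ by the definition of the stable Lebesgue integral of a stable elementary function. Thus $\lambda$ satisfies the required identity on the stable generator $\mathcal{E}$ of \eqref{eq:prodgenerator}.

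For uniqueness, I would invoke Proposition \ref{p:uniqueness}: $\mathcal{E}$ is a stable collection closed under finite conditional intersections (as noted just before \eqref{eq:prodgenerator}) that generates $\X\otimes\Y$, it contains $X\times Y$ with $\lambda(X\times Y)=1<\infty$, so any two stable probability measures agreeing on $\mathcal{E}$ agree on $\X\otimes\Y$. Finally, to obtain the symmetric representation $\lambda(Z|C)=\int_Y s^Y_{Z|C}\,d\nu$, I would define $\lambda'(Z|C):=\int_Y s^Y_{Z|C}\,d\nu$, note by the same argument that $\lambda'$ is a stable probability measure agreeing with $\lambda$ on the rectangles $\mathcal{E}$, and conclude $\lambda'=\lambda$ by the uniqueness just proved. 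The main obstacle I anticipate is the careful verification of countable additivity of $\lambda$: one must legitimately interchange the infinite conditional sum coming from (F3)/(M2) for $\nu$ with the integral $\int_X$, which requires the monotone convergence theorem and attention to the fact that the $s^X$-functions are $\bar L^0_+$-valued (possibly taking the value $+\infty$ on null-set-indexed pieces), so the pointwise a.s.\ conventions for sums in $\bar L^0_+$ must be used consistently.
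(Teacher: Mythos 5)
Your proposal is correct and follows essentially the same route as the paper: define $\lambda(Z|C):=\int_X s^X_{Z|C}\,d\mu$, verify (M1)--(M2) via (F2), (F3), (I1) and the monotone convergence theorem, compute $\lambda$ on conditional rectangles, and conclude uniqueness (and the symmetric formula via $\lambda'$) from Proposition \ref{p:uniqueness} applied to the $\sqcap$-closed generator $\mathcal{E}$. Your explicit invocation of (F5) and (F7) to justify measurability of $s^X_{Z|C}$ is a detail the paper leaves implicit, but the argument is the same.
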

We call $\mu\otimes \nu:=\lambda$ the \emph{stable product measure of $\mu$ and $\nu$} and the triple $(X\times Y, \X\otimes \Y, \mu\otimes \nu)$ the \emph{stable product probability space}. 
\begin{proof}
For $Z|C\in\mathcal{X}\otimes\mathcal{Y}$, define 
\[
\lambda(Z|C):=\int_X s^X_{Z|C} d\mu. 
\]
By (F2), the stability of $\nu$ and (I1), $\lambda$ is a stable function on $\X\otimes \Y$ satisfying (M1). 
That $\lambda$ satisfies (M2) follows from (F3) and Theorem \ref{t:monotone convergence}.  
For $V|A\in\mathcal{X}$ and $W|B\in\mathcal{Y}$, one has 
\begin{equation*}\label{eq:prod}
\lambda(V|A\times W|B)=\nu(V|A)\mu(W|B). 
\end{equation*}
By analogous arguments, the mapping 
\begin{equation*}
  \X\otimes \Y \ni Z|C\mapsto  \lambda^\prime(Z|C):=\int_Y s^Y_{Z|C} d\nu 
\end{equation*}
is a stable probability measure with the above property. 
Since $\E$ as defined in \eqref{eq:prodgenerator} is a stable generator of $\X\otimes \Y$ closed under finite conditional intersections,  the claim follows from Proposition \ref{p:uniqueness}. 
\end{proof}
We have the following version of Fubini's theorem, which can easily be extended to $\sigma$-finite stable measure spaces. 
\begin{theorem}
Let $f\colon X\times Y\to L^0$ be a stable integrable function with respect to $(X\times Y, \X\otimes \Y, \mu\otimes \nu)$. 
Then the functions 
\[
x\mapsto \int_Y f_x d\nu \quad \text{and} \quad y\mapsto \int_X f_y d\mu 
\]
are stable integrable functions, and one has 
\begin{equation*}\label{eq:51}
\int_{X\times Y} f d\mu\otimes\nu = \int_X\left(\int_Y f_x d\nu \right) d\mu =\int_Y\left(\int_X f_y d\mu \right) d\nu. 
\end{equation*}
\end{theorem}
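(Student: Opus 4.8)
The plan is to follow the classical route: first establish the statement for stable indicator functions of sets in $\X\otimes\Y$, then extend by linearity to stable elementary functions, then to nonnegative stably measurable functions by monotone approximation, and finally to general integrable $f$ by splitting into $f^+$ and $f^-$. The key point that makes everything work is Lemma \ref{p:prodmeas}, which already records the identity $\int_{X\times Y} \mathbf{1}_{Z|C}\, d\mu\otimes\nu = \lambda(Z|C)=\int_X \nu((Z|C)_x)\,d\mu = \int_Y \mu((Z|C)_y)\,d\nu$; and the sectioning properties (F1)--(F7), in particular (F6) and (F7), which guarantee that the inner integrals $x\mapsto\int_Y f_x\,d\nu$ and $y\mapsto\int_X f_y\,d\mu$ are well-defined stably measurable functions in the first place.

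First I would treat $f=\mathbf{1}_{Z|C}$ for $Z|C\in\X\otimes\Y$. By (F6) the section $f_x=(\mathbf{1}_{Z|C})_x$ is stably $\Y$-measurable, and one checks directly from the definition of the conditional $x$-section \eqref{eq:section} and of the stable indicator function that $(\mathbf{1}_{Z|C})_x=\mathbf{1}_{(Z|C)_x}$, so $\int_Y f_x\,d\nu = \nu((Z|C)_x)=s^X_{Z|C}(x)$; then (F7) and Lemma \ref{p:prodmeas} give $\int_X\big(\int_Y f_x\,d\nu\big)\,d\mu = \int_X s^X_{Z|C}\,d\mu = \lambda(Z|C)=\int_{X\times Y} f\,d\mu\otimes\nu$, and symmetrically for the $y$-iteration. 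Next, for a stable elementary function $f=\sum_{k\le n} r_k\mathbf{1}_{V_k|A_k}$ with $r_k\in L^0_+$ and $(V_k|A_k)$ in $\X\otimes\Y$, sectioning is stable and commutes with the finite sum, and the stable integral is $L^0$-linear by (I3); applying the indicator case termwise and pulling the $r_k$ through yields the result for $f$. Note that the inner integrals are themselves stably measurable and nonnegative here, so the outer integrals are defined.

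For a nonnegative stably measurable $f\colon X\times Y\to L^0_+$, I would invoke Lemma \ref{l:approxbysimplefct} to pick an increasing sequence $(f_n)$ of stable elementary functions with $\sup_n f_n=f$ pointwise. By (F1)--(F2) and the convergence notion \eqref{eq:convergence}, for each fixed $x$ the sections $(f_n)_x$ increase pointwise to $f_x$, so the conditional monotone convergence theorem (Theorem \ref{t:monotone convergence}) gives $\int_Y f_x\,d\nu=\sup_n\int_Y (f_n)_x\,d\nu$; this exhibits $x\mapsto\int_Y f_x\,d\nu$ as a pointwise supremum of the stably measurable functions from the elementary case, hence stably measurable, and a second application of Theorem \ref{t:monotone convergence} on $X$ together with the elementary case yields $\int_X\big(\int_Y f_x\,d\nu\big)\,d\mu=\sup_n\int_X\big(\int_Y (f_n)_x\,d\nu\big)\,d\mu=\sup_n\int_{X\times Y}f_n\,d\mu\otimes\nu=\int_{X\times Y}f\,d\mu\otimes\nu$, and likewise with the roles of $X$ and $Y$ exchanged. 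Finally, for integrable $f\colon X\times Y\to L^0$ write $f=f^+-f^-$; applying the nonnegative case to $f^+$ and $f^-$ shows $\int_{X\times Y}f^\pm\,d\mu\otimes\nu\in L^0$ forces $\int_Y (f^\pm)_x\,d\nu\in L^0$ for all $x$ (using (F4), since $(f^\pm)_x=(f_x)^\pm$, and (I2)), so $f_x$ is $\nu$-integrable, $x\mapsto\int_Y f_x\,d\nu=\int_Y (f^+)_x\,d\nu-\int_Y (f^-)_x\,d\nu$ is stably measurable and, by the nonnegative case applied to each piece, integrable; subtracting the two identities gives the claim, and the symmetric argument handles the $y$-iteration.

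The step I expect to require the most care is the bookkeeping that the inner integral function $x\mapsto\int_Y f_x\,d\nu$ is stably $\X$-measurable at each level of the approximation — this is exactly where (F7) (for indicators), the $L^0$-linearity (I3) and the stability of sectioning (F1)--(F2) must be combined, and where one must be careful that ``$\sup_n$ of stably measurable is stably measurable'' is legitimate via the convergence notion \eqref{eq:convergence}. The measure-theoretic identities themselves are then immediate from Lemma \ref{p:prodmeas} and Theorem \ref{t:monotone convergence}; the only genuine subtlety beyond the classical proof is that all suprema, limits and null-set manipulations live in $\bar L^0$ and must be read in the pointwise a.s.\ sense, but this is uniform throughout the paper and causes no real difficulty.
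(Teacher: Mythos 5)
Your proposal is correct and follows essentially the same route as the paper: elementary functions via Lemma \ref{p:prodmeas} and (F6)--(F7), then nonnegative functions via Lemma \ref{l:approxbysimplefct} and Theorem \ref{t:monotone convergence}, then the split $f=f^+-f^-$; your extra preliminary step for indicators is simply folded into the elementary case in the paper's version. One tiny slip: the identity $(f^\pm)_x=(f_x)^\pm$ follows directly from the pointwise definition $f_x(y)=f(x,y)$, not from (F4), which concerns conditional complements of set sections.
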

\begin{proof}
First, suppose that $f$ is a  stable elementary  function of the form $\sum_{k\leq n} r_k 1_{Z_k|C_k}$. 
Then $f_x=\sum_{k\leq n} r_k 1_{(Z_k|C_k)_x}$ is  stably measurable due to (F6). 
By (F7), 
\begin{equation*}
x\mapsto \int_Y f_x d\nu=\sum_{k\leq n} r_k \nu((Z_k|C_k)_x) 
\end{equation*}
is  stably measurable and $L^0$-bounded. 
By Lemma \ref{p:prodmeas}, 
\begin{equation*}
 \int_X\left(\int_Y f_x d\nu\right) d\mu=\sum_{k\leq n}r_k \mu\otimes \nu(Z_k|C_k)=\int_{X\times Y} f d\mu\otimes\nu. 
\end{equation*}
Second, let $f$ be a non-negative stable integrable function, and choose with the help of Lemma \ref{l:approxbysimplefct} an increasing sequence $(f_n)$ of stable elementary functions such that $f=\sup_n f_n$. 
By Theorem \ref{t:monotone convergence}, (I2) and the first step, 
\[
\int_{X\times Y} f d\mu\otimes\nu=\int_X\left(\int_Y f_x d\nu\right) d\mu. 
\]
Finally, for an arbitrary stable integrable function $f$, the previous equality follows from the identities $|f|_x=(f_+)_x+(f_-)_x$ and $f_x=(f_+)_x-(f_-)_x$, (I3) and the first and second step.  
Analogously, one can prove 
\begin{equation*}\label{eq:51}
\int_{X\times Y} f d\mu\otimes\nu  =\int_Y\left(\int_X f_y d\mu \right) d\nu, 
\end{equation*}
which finishes the proof. 
\end{proof}
\begin{remark}
A stable function $K\colon X\times \Y\to L^0_+$ is said to be a \emph{stable Markov kernel} if 
\begin{itemize}
\item[(i)] $K(x,\cdot)$ is a stable probability measure for all $x\in X$, 
\item[(ii)] $K(\cdot,V|A)$ is stably measurable for all $V|A\in\Y$.  
\end{itemize}
We can extend Fubini's theorem to stable Markov kernels.
We provide the statement of this extension in the following, but will omit a proof as it can be worked out with a similar strategy as above.  
Let $\mu$ be a stable probability measure on $(X,\X)$.  
Then $K\otimes \mu(V|A):=\int_X K(x,(V|A)_x)d \mu$ defines a stable probability measure on $\X\otimes \Y$.  
If $f\colon X\times Y\to L^0$ is stably integrable w.r.t.~$K\otimes \mu$, then 
\[
\int_{X\times Y} f d K\otimes \mu=\int_X \int_Y f(x,y)K(x,dy)d\mu. 
\]
\end{remark}
\subsection{Radon-Nikod\'{y}m theorem.} 
Throughout this subsection, fix a stable measurable space $(X,\X)$, and let  $\mu$ be a stable probability measure on $(X,\X)$. 
If $f\geq 0$ is a stable integrable function with $\int_X f d\mu=1$, then from (D1)-(D3) and Theorem \ref{t:monotone convergence} we have that 
\[
 \nu(V|A):=\int_X 1_{V|A} f d\mu, \quad V|A\in \X, 
\]
is a stable probability measure absolutely continuous w.r.t.~$\mu$, where a stable probability measure $\nu$ is said to be \emph{absolutely continuous} w.r.t.~$\mu$ whenever $V|A\in \mathcal{X}$ and $\mu(V|A)= 0 $ imply $\nu (V|A)=0$. 
In this subsection, we prove the following converse statement. 
\begin{theorem}\label{t:rn}
If $\nu$ is a stable probability measure absolutely continuous relative to $\mu$, then there exists a stable integrable function $f\colon X\to L^0$ such that 
\[
 \nu(V|A)=\int_X 1_{V|A} f d\mu, 
\]
for all $V|A\in \X$
\end{theorem}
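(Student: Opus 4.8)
The plan is to mimic the classical von Neumann / Hilbert space proof of the Radon--Nikod\'ym theorem, carried out in the conditional framework. First I would form the stable probability measure $\rho:=\frac{1}{2}(\mu+\nu)$ on $(X,\X)$ and consider the stable Lebesgue space of square-integrable functions with respect to $\rho$, i.e.\ the space of stably measurable $g\colon X\to L^0$ with $\int_X g^2\,d\rho\in L^0$; this is an $L^0$-module equipped with the $L^0$-valued inner product $\langle g,h\rangle:=\int_X gh\,d\rho$. The linear functional $g\mapsto \int_X g\,d\nu$ is well-defined on this space and is $L^0$-bounded by $\left(\int_X g^2\,d\nu\right)^{1/2}\leq \sqrt{2}\,\langle g,g\rangle^{1/2}$, using $\nu\leq 2\rho$ and the Cauchy--Schwarz inequality (which follows from (I2) and (I3) applied pointwise). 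By the conditional Riesz representation theorem for $L^0$-Hilbert modules (available in the cited conditional analysis literature, or provable directly by a conditional projection argument), there is an element $h$ of the space with $\int_X g\,d\nu=\int_X gh\,d\rho$ for all square-integrable $g$.

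Next I would extract the density. Testing the identity on $g=1_{V|A}$ gives $\nu(V|A)=\int_X 1_{V|A}h\,d\rho$, hence $\int_X 1_{V|A}h\,d\mu=\int_X 1_{V|A}(1-h)\,d\nu$ for all $V|A\in\X$, after rearranging via $\rho=\frac12(\mu+\nu)$. A conditional ``pointwise'' argument — using an exhaustion argument to build the largest $A'\in\F$ on which a bad inequality holds and deriving a contradiction with the displayed identity applied to $V|A'$ for suitable $V$ — shows $0\leq h\leq 1$ in the $L^0$-sense $\mu$-a.e., and moreover that $\nu(\{h=1\})=0$, whence by absolute continuity $\mu(\{h=1\})=0$; here $\{h=1\}$ denotes the conditional set $h^{-1}(\{s\in L^0: s\geq 1\})\sqcap h^{-1}(\{s\leq 1\})$ cut down appropriately. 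On the conditional complement of $\{h=1\}$ the function $f:=h/(1-h)$ is a well-defined stably measurable function $X\to L^0_+$ (pointwise quotient, with the convention on $\bar L^0_+$), extended by any value on $\{h=1\}$ since that set is $\mu$-null.

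Finally I would verify $\nu(V|A)=\int_X 1_{V|A}f\,d\mu$ for all $V|A\in\X$. Starting from $\int_X 1_{V|A}(1-h)\,d\nu=\int_X 1_{V|A}h\,d\mu$ and iterating, one shows by induction that $\int_X 1_{V|A}\bigl(1+h+\cdots+h^{n}\bigr)h\,d\mu=\int_X 1_{V|A}(1-h^{n+1})\,d\nu$; letting $n\to\infty$ and applying the monotone convergence theorem (Theorem \ref{t:monotone convergence}) to the increasing sequence of stable integrable functions $1_{V|A}(1+h+\cdots+h^{n})h$ on the left, together with $h^{n+1}\downarrow 0$ $\nu$-a.e.\ on the conditional complement of $\{h=1\}$ on the right, yields $\int_X 1_{V|A}f\,d\mu=\nu(V|A)$. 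In particular, taking $V|A=X$ shows $\int_X f\,d\mu=\nu(X)=1<\infty$, so $f$ is stable integrable as required.

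The main obstacle I anticipate is the conditional Hilbert module / Riesz representation step: one must either invoke a conditional version of the projection theorem for $L^0$-modules (which requires the relevant completeness and that the module is generated by countable concatenations, matching our standing assumptions on stable sets) or, if such a statement is not comfortably available in the cited references, replace it by a more hands-on construction — for instance an increasing approximation argument building $f$ directly as a supremum of stable elementary functions $g$ satisfying $\int_X 1_{V|A}g\,d\mu\leq \nu(V|A)$ for all $V|A$, and using an exhaustion argument to show the supremum attains equality. The secondary delicate point is the conditional a.e.\ reasoning used to locate $\{h=1\}$ and to establish the bounds $0\leq h\leq 1$, which must be phrased via conditional sets and exhaustion rather than classical pointwise statements, but this is routine given Remark \ref{r:exhaustion} and properties (M1)--(M8).
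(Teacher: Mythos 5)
Your main route (von Neumann's Hilbert-space argument applied to $\rho=\tfrac12(\mu+\nu)$) has a genuine gap at exactly the point you flag: the conditional Riesz representation theorem for the $L^0$-module of square-$\rho$-integrable stably measurable functions is not available in the paper or, in the required generality, in the cited literature. The conditional $L^p$-module results that exist (e.g.\ in the references on $L^0$-modules) concern modules over the base probability space $(\Omega,\F,\p)$, whereas here you need a Hilbert-module structure and a projection theorem over the abstract stable measure space $(X,\X,\rho)$. Before one can even state such a theorem one must prove completeness of this module, which requires a conditional Riesz--Fischer theorem for the stable measure $\rho$ (a.e.-type convergence of subsequences, a notion of $\rho$-null conditional sets, etc.) --- none of this is developed in Section \ref{s:measure}, and it is not routine to supply. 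So as written, the core functional-analytic input of your proof is an unproved lemma of comparable difficulty to the theorem itself. There are also two smaller slips: with the normalization $\rho=\tfrac12(\mu+\nu)$ the identity you derive is $\int 1_{V|A}h\,d\mu=\int 1_{V|A}(2-h)\,d\nu$, not $\int 1_{V|A}(1-h)\,d\nu$ (use $\rho=\mu+\nu$ to get the clean form); and the implication for $\{h=1\}$ runs the other way --- the identity applied to $\{h=1\}$ gives $\mu(\{h=1\})=0$ directly, and then $\nu(\{h=1\})=0$ follows \emph{by} absolute continuity, whereas absolute continuity cannot transfer $\nu$-nullity to $\mu$-nullity.

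The paper takes the elementary route that you sketch only as a fallback: it defines $\mathcal{H}$ as the collection of stable integrable $g\geq 0$ with $\int_X 1_{V|A}\,g\,d\mu\leq\nu(V|A)$ for all $V|A$, takes $f=\sup_k f_k$ along an increasing sequence realizing $\sup_{g\in\mathcal{H}}\int_X g\,d\mu$, and shows $\nu-\tilde\nu=0$ by contradiction. The one ingredient your fallback sketch is missing is how that contradiction is produced: a bare exhaustion argument does not suffice; the paper first proves a conditional Hahn-decomposition-type lemma (Lemma \ref{l1}), applies it to $\lambda-s\mu$ with $s=\lambda(X)/2$ to extract a conditional set $X_0|A_0$ on which $\lambda\geq s\mu$ and $\lambda(X_0|A_0)-s\mu(X_0|A_0)>0$, and then observes that $f+s1_{X_0|A_0}$ still lies in $\mathcal{H}$ but has strictly larger integral. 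If you want to salvage your write-up within the paper's toolkit, you should either prove that Hahn-type lemma (its proof is itself a nontrivial exhaustion/induction) and switch to the subdensity argument, or else develop the conditional $L^2$-module theory as a separate, substantial preliminary.
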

By a straightforward extension, a Radon-Nikod\'ym theorem can also be established in the case where $\mu$ and $\nu$ are $\sigma$-finite. 
The proof is based on the following auxiliary result. 
\begin{lemma}\label{l1}
Let $\mu_1$ and $\mu_2$ be finite stable measures on $(X,\mathcal{X})$ and define $\mu_3:=\mu_2-\mu_1$. 
Then there exists $ X_0|A_0\in \mathcal{X}$ such that 
\begin{itemize}
 \item[(i)] $\mu_3(X)\leq \mu_3( X_0|A_0 )$;
 \item[(ii)] $0\leq \mu_3(V|A)$ for all $V|A\in \mathcal{X}$ with $V|A\sqsubseteq  X_0|A_0$. 
\end{itemize}
\end{lemma}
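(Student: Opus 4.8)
The plan is to transcribe the classical Hahn decomposition argument, with $X_0|A_0$ playing the role of the maximal positive set. Since $\mu_1,\mu_2$ are finite, $\mu_3$ is $L^0$-valued, and the family $\mathcal S:=\{\mu_3(V|A)\colon V|A\in\X\}\subseteq L^0$ is bounded above by $\mu_2(X)$ (use (M4)) and contains $\mu_3(\{\ast\})=0$; by Dedekind completeness of the order on $L^0$ I would set $\beta:=\sup\mathcal S\in L^0_+$. The key preliminary observation is that $\mathcal S$ is \emph{stable} — because $\mu_3$ is a stable function and $\X$ a stable collection — and \emph{upward directed}: given $V_1|A_1,V_2|A_2\in\X$, concatenating along the partition $(C,C^c)$ with $C:=\{\mu_3(V_1|A_1)\geq\mu_3(V_2|A_2)\}$ produces a member of $\X$ whose $\mu_3$-value is the pointwise maximum $\max\{\mu_3(V_1|A_1),\mu_3(V_2|A_2)\}$. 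For a stable, upward directed family an exhaustion argument (Remark \ref{r:exhaustion}) shows that $\beta$ is approximated \emph{almost surely}, i.e.\ for each $n$ there is $V_n|A_n\in\X$ with $\mu_3(V_n|A_n)\geq\beta-2^{-n}$ a.s.

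Next I would run the limsup construction. Applying (M3) to $\mu_1$ and $\mu_2$ separately and subtracting gives $\mu_3(U|B\sqcap U'|B')\geq\mu_3(U|B)+\mu_3(U'|B')-\beta$, hence by induction $\mu_3(\sqcap_{n=m}^{k}V_n|A_n)\geq\beta-2^{-m+1}$ for all $k\geq m$. Setting $P_m:=\sqcap_{n\geq m}V_n|A_n\in\X$ (a stable $\sigma$-algebra is closed under countable conditional intersections by De Morgan), continuity from above — (M8) for $\mu_1$ and $\mu_2$ — yields $\mu_3(P_m)=\lim_k\mu_3(\sqcap_{n=m}^{k}V_n|A_n)\geq\beta-2^{-m+1}$. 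The sequence $(P_m)$ is increasing w.r.t.\ $\sqsubseteq$, so $X_0|A_0:=\sqcup_m P_m\in\X$, and continuity from below — (M7) for $\mu_1,\mu_2$ — gives $\mu_3(X_0|A_0)=\lim_m\mu_3(P_m)\geq\beta$; since $X_0|A_0\in\X$ the reverse inequality is automatic, so $\mu_3(X_0|A_0)=\beta$.

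With this, (i) is immediate because $X\in\X$: $\mu_3(X)\leq\beta=\mu_3(X_0|A_0)$. For (ii), given $V|A\in\X$ with $V|A\sqsubseteq X_0|A_0$ I would use (M5) for $\mu_1,\mu_2$ (legitimate since $\mu_3(V|A)<\infty$) to get $\mu_3\bigl(X_0|A_0\sqcap(V|A)^\sqsubset\bigr)=\beta-\mu_3(V|A)$; the left-hand side lies in $\mathcal S$ and is therefore $\leq\beta$, which forces $\mu_3(V|A)\geq 0$ a.s.

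The hard part is the attainment step in the first paragraph: in $L^0$ a supremum is in general not approached by a single element up to a uniform (a.s.) error, so the stability and upward-directedness of $\mathcal S$ are essential to make the exhaustion argument deliver the bounds $\mu_3(V_n|A_n)\geq\beta-2^{-n}$ holding almost surely on all of $\Omega$. Beyond that, the only subtlety is bookkeeping: (M3), (M5), (M7) and (M8) are stated for (pre-)measures, so each must be applied to $\mu_1$ and $\mu_2$ individually and the results combined, since $\mu_3$ is merely the difference of two finite stable measures.
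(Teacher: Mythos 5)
Your proof is correct, but it follows a genuinely different route from the paper's. You run the ``maximal positive set'' version of the Hahn decomposition: set $\beta:=\sup\{\mu_3(V|A)\colon V|A\in\X\}$, use stability and upward directedness of this family (via concatenation along $\{\mu_3(V_1|A_1)\geq\mu_3(V_2|A_2)\}$) to get the a.s.\ uniform approximations $\mu_3(V_n|A_n)\geq\beta-2^{-n}$, then the (M3)/(M8)/(M7) limsup construction to attain $\beta$ at $X_0|A_0$, after which (ii) drops out of maximality via (M5). The paper instead proves an approximate statement first --- for each $r\in L^0_{++}$ it builds $X_r|A_r$ with $\mu_3(X)\leq\mu_3(X_r|A_r)$ and $\mu_3>-r$ on all measurable conditional subsets --- by iteratively excising sets of $\mu_3$-measure $\leq -r$ and using finiteness of $\mu_3$ together with (M2) to show the excision terminates along a partition of $\Omega$; it then intersects the $X_{1/n}|A_{1/n}$ and applies (M8). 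Your approach buys a cleaner endgame (part (ii) is a one-line consequence of $\mu_3(X_0|A_0)=\beta$) and leans on the Dedekind completeness of $L^0$ plus the directedness trick, which you correctly identify as the one point where stability is indispensable --- in $L^0$ a supremum is not otherwise approached uniformly a.s.\ by single elements. The paper's route is more hands-on and self-contained: it only uses the conditional set operations, (M1)--(M8), and the same exhaustion-along-a-partition technique it employs throughout, at the cost of a more intricate bookkeeping of the sets $B_0,B_1,\dots$ and $(V_n|A_n)$. Your side remarks --- that (M3), (M5), (M7), (M8) must be applied to $\mu_1$ and $\mu_2$ separately and then subtracted, and that (M8) is applicable because finiteness of $\mu_1,\mu_2$ forces all their values to be finite by (M4) --- are exactly the right cautions, and the induction $\mu_3(\sqcap_{n=m}^{k}V_n|A_n)\geq\beta-2^{-m+1}$ checks out.
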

\begin{proof}
First, we establish the weaker statement: For all $r\in L^0_{++}$ there is $ X_r|A_r\in \mathcal{X}$ such that 
\begin{itemize}
 \item[$(i)^\prime$] $\mu_3(X)\leq \mu_3( X_r|A_r)$;
 \item[$(ii)^\prime$] $-r<\mu_3(V|A)$ for all $V|A\in \mathcal{X}$ with $V|A\sqsubseteq  X_r|A_r$. 
\end{itemize} 
Let $B_0:=\{\mu_3(X)\leq 0\}$. 
If $B_0=\Omega$, then $ X_r|A_r:=\{\ast\}$ satisfies (i)$^\prime$ and (ii)$^\prime$. 
Otherwise $B_0^c\in \F_+$. 
In this case let 
\[
B_1:= \sup \{B^\prime\in \F\colon B^\prime\subseteq B^c_0, -r<\mu_3(V|A) \text{ for all } V|A\in\mathcal{X} \text{ with } V|A\sqsubseteq X|B^\prime\}. 
\]
By an exhaustion argument, $B_1$ is attained. 
If $B_1=B_0^c$, then $ X_r|A_r:=X|B_1$ fulfills the required. 
Indeed, by stability and (M1), 
\[
\mu_3(X)=\mu_3(X)|B_0 + \mu_3(X)|B_1\leq  0|B_0 + \mu_3(X)|B_1= \mu_3(X|B_1), 
\]
which shows (i)$^\prime$, and (ii)$^\prime$ follows from the definition of $B_1$. 
If $B_0^c\cap B_1^c\in \F_+$, then by the maximality of $B_1$ there exists $V_1|A_1\in\mathcal{X}$ with $V_1|A_1\sqsubseteq X|(B_0 \cup  B_1)^c$ such that $\mu_3(V_1|A_1)|(B_0 \cup  B_1)^c\leq -r|(B_0 \cup  B_1)^c$.  
Thus 
\begin{align*}
&\mu_3((V_1|A_1)^\sqsubset|(B_0 \cup  B_1)^c+X|B_1+\{\ast\}|B_0) &\\
&=\mu_3((V_1|A_1)^\sqsubset)|(B_0 \cup  B_1)^c+ \mu_3(X)|B_1 +0|B_0 \quad &\text{by stability and (M1)}\\
&=(\mu_3(X)-\mu_3(V_1|A_1))|(B_0 \cup  B_1)^c+\mu_3(X)|B_1 +0|B_0\quad &\text{by (M5)}\\
&\geq \mu_3(X)|(B_0 \cup  B_1)^c +\mu_3(X)|B_1 + 0|B_0 \quad & \text{by choice of $V_1|A_1$}\\
&\geq \mu_3(X). \quad & \text{by definition of $B_0$}. 
\end{align*}
So if 
\begin{equation*}
B_2:= \sup \{B^\prime\in \F\colon B^\prime\subseteq B^c_0\cap B_1^c, -r<\mu_3(V|A) \text{ for all } V|A\in\mathcal{X} \text{ with } V|A\sqsubseteq X|B^\prime\} 
\end{equation*}
is equal to $(B_0 \cup  B_1)^c$, then 
\[
X_r|A_r:=(V_1|A_1)^\sqsubset|(B_0 \cup  B_1)^c+X|B_1+\{\ast\}|B_0
\]
satisfies (i)$^\prime$ and (ii)$^\prime$. 
Otherwise we have $B_0^c \cap  B_1^c\cap B_2^c\in \F_+$ and there exists $V_2|A_2\in\mathcal{X}$ with $V_2|A_2\sqsubseteq (V_1|A_1)^\sqsubset|(B_0 \cup  B_1 \cup  B_2)^c$ such that $\mu_3(V_2|A_2)|(B_0 \cup  B_1 \cup  B_2)^c\leq -r|(B_0 \cup  B_1 \cup  B_2)^c$.  
Since $V_1|A_1\sqcap V_2|A_2= \{\ast\} $, one has similarly to the above computation  
\begin{equation*}
\mu_3((V_1|A_1\sqcup V_2|A_2)^\sqsubset|(B_0 \cup  B_1 \cup  B_2)^c+(V_1|A_1)^\sqsubset|B_2+X|B_1+\{\ast\}|B_0) \geq \mu_3(X), 
\end{equation*}
and we can repeat the previous procedure. 
If this procedure does not yield the desired after finitely many steps, we obtain a sequence $(B_n)_{n\geq 0}$ of pairwise disjoint elements in $\F$ and a sequence $(V_n|A_n)_{n\geq 1}$ of pairwise disjoint sets in $\mathcal{X}$ such that 
\begin{align*}
 &\mu_3((V_1|A_1\sqcup \ldots \sqcup V_n|A_n)^\sqsubset|(B_0 \cup  B_1 \cup  \ldots  \cup  B_n)^c \\ 
 &+(V_1|A_1\sqcup \ldots \sqcup V_{n-1}|A_{n-1})^\sqsubset|B_n+\ldots +(V_1|A_1)^\sqsubset|B_2+X|B_1+\{\ast\}|B_0) \geq \mu_3(X), 
\end{align*}
and 
\begin{equation*}
 \mu_3(V_n|A_n)|(B_0 \cup  B_1 \cup  \ldots  \cup  B_n)^c\leq -r|(B_0 \cup  B_1 \cup  \ldots  \cup  B_n)^c 
\end{equation*}
for all $n\geq 1$. 
If $B:=\cap_{n\geq 0} B_n^c\in \F_+$, then by (M2), 
\begin{align*}
 \mu_3(\sqcup_n V_n|A_n\cap B) =\sum_{n} \mu_3(V_n|A_n)|B+ 0|B^c = -\infty|B + 0|B^c 
\end{align*}
which contradicts the finiteness of $\mu_3$.  
Thus $(B_n)_{n\geq 0}$ is a partition of $\Omega$ and $ X_r|A_r:=\sum W_n|B_n$ where $W_0:= \{\ast\} $, $W_1:=X$ and $W_n:=(V_1|A_1\sqcup \ldots \sqcup V_{n-1}|A_{n-1})^\sqsubset$, $n\geq 2$, satisfies (i)$^\prime$ and (ii)$^\prime$.  

Second, we apply the previously established weaker statement to prove the claim. 
For every $n\in \mathbb{N}$, we can recursively choose $X_{1/n}|A_{1/n}$ such that $\mu_3(X)\leq \mu_3(X_{1/n}|A_{1/n})$ and $-1/n<\mu_3(V|A)$ for all $V|A\in\mathcal{X}$ with $V|A\sqsubseteq X_{1/n}|A_{1/n}$ and such that $X_{1/(n+1)}|A_{1/(n+1)}\sqsubseteq X_{1/n}|A_{1/n}$. 
Then $ X_0|A_0 :=\sqcap_n X_{1/n}|A_{1/n}$ fulfills (i)$^\prime$ and (ii)$^\prime$ due to (M8). 
\end{proof}
We prove Theorem \ref{t:rn}. 
\begin{proof}
Let $\mathcal{H}$ be the collection of all stable integrable functions $f\colon X\to  L^0_+$ such that 
\[
\int_X 1_{V|A} f d\mu\leq \nu (V|A)
\]
for all $V|A\in\mathcal{X}$. 
Inspection shows that $\mathcal{H}$ is upward directed. 
Let $r=\sup_{f\in\mathcal{H}}\int_X f d\mu \leq 1$. 
There exists an increasing sequence $(f_k)$ in $\mathcal{H}$ such that $\sup_k \int_X f_k d\mu=r$.
By Theorem \ref{t:monotone convergence}, we have $\int_X f d\mu=r$ for $f:=\sup_k f_k$. 
By another application of Theorem \ref{t:monotone convergence}, we see that 
\[
 \tilde{\nu}(V|A):=\int_X 1_{V|A} f d\mu \leq \nu (V|A) \quad \text{for all $V|A\in\mathcal{X}$}. 
\]
It remains to show that $\lambda:=\nu  - \tilde{\nu}=0$.  
By contradiction, suppose that $C:=\{\lambda(X)>0\}\in \F_+$. 
W.l.o.g. assume $C=\Omega$. 
Let $s:=\lambda(X)/2$.  
Applying Lemma \ref{l1} to $\mu_2=\lambda$ and $\mu_1=s\mu$ supplies us with $ X_0|A_0 \in\mathcal{X}$ such that $\lambda( X_0|A_0 )-s\mu( X_0|A_0 )\geq \lambda(X)-s\mu(X)>0$ and $\lambda(V|A)\geq s\mu(V|A)$ for all $V|A\in \mathcal{X}$ with $V|A\sqsubseteq  X_0|A_0$.  
It can be checked that $\tilde{f}:=f+s1_{ X_0|A_0 }\in \mathcal{H}$ with $\int_X \tilde{f} d\mu>r$ which is a contradiction. 
\end{proof}
\subsection{Daniell-Stone theorem and Riesz representation theorem.}
The Daniell-Stone theorem and the Riesz representation theorem provide sufficient conditions under which a positive linear functional on a vector lattice is an integral. 
In this last section, we establish a conditional version of both theorems which state when a positive $L^0$-linear function on a stable vector lattice is a stable integral. 
The proof of the Daniell-Stone theorem in the present setting is an adaptation of a proof of the classical statement by using Carath\'eodory's extension theorem, and therefore we omit its proof.  
We prove Riesz type representation and regularity results for positive $L^0$-linear functions on the stable vector lattice of all stable sequentially continuous functions $f\colon (L^0)^d\to L^0$ and its stable subspace consisting of functions with stably compact conditional support, respectively.  

Consider on $(L^0)^d=L^0(\mathbb{R}^d)$ the $L^0$-valued Euclidean norm, see Example \ref{exp:sets}.5), and endow it  with the stable Borel $\sigma$-algebra $\B^d:=\mathcal{B}((L^0)^d)$, see Example \ref{exp:algebra}.2). 
A conditional subset $V|A\sqsubseteq (L^0)^d$ is said to be 
\begin{itemize}
\item \emph{$L^0$-bounded} if there exists $M\in L^0_+$ such that $\|x|A\|=\|x\||A\leq M|A$ for all $x|A\in V|A$;
\item \emph{sequentially closed} if $V|A$ includes the limit of every a.s.~convergent sequence in $V|A$;
\item \emph{stably compact} if $V|A$ is $L^0$-bounded and sequentially closed.\footnote{Stable compactness refers to the interpretation of conditional compactness in a classical context. The characterization, which is stated above, is based on a conditional version of the Heine-Borel theorem, see \cite[Theorem 4.6]{DJKK13}.}
\end{itemize}
Let $X$ be a stable set of functions on $\Omega$.  
Recall that for a sequence $(f_k)$ of stable functions $f_k\colon X\to L^0$ and a partition $(A_k)$, we define $\sum f_k|A_k\colon X\to L^0$ by $(\sum f_k|A_k)(x):=\sum f_k(x)|A_k$, $x\in X$. We call a space $\mathcal{L}$ of functions $f\colon X\to L^0$ \emph{stable} if  $\mathcal{L}\neq \emptyset$ and $\sum f_k|A_k\in \mathcal{L}$ for all sequences $(f_k)$ in $\mathcal{L}$ and every partition $(A_k)$ of $\Omega$. 
\begin{definition}
Let $X$ be a stable set of functions on $\Omega$. 
A set $\mathcal{L}$ of stable functions $f\colon X\to L^0$ is said to be a \emph{stable Stone vector lattice} if $\mathcal{L}$ is stable and $f+rg,\min\{f,g\},\min\{f,1\}\in \mathcal{L}$ for all $f,g\in\mathcal{L}$ and $r\in L^0$.\footnote{Here, $1$ denotes the function with the constant value $1=1_\Omega\in L^0$.}
\end{definition}
\begin{examples}
Let $\text{cl}(V|A)$ denote the sequential closure of $V|A$. 
Let $f\colon (L^0)^d\to L^0$ be a stable function. Its \emph{conditional support} is defined by
\[
\text{supp}(f):=\text{cl}(f^{-1}(\{0\}^\sqsubset)), 
\]
and $f$ is said to have \emph{stably compact conditional support} if $\text{supp}(f)$ is stably compact. 
A function $f\colon (L^0)^d\to L^0$  is said to be \emph{sequentially continuous} if $f(x_n)\to f(x)$ a.s.~whenever $x_n\to x$ a.s. 
Let $\mathcal{C}$ denote the space of all stable and sequentially continuous functions, and $\mathcal{C}_c$ denote its subspace of functions with stably compact conditional support.  Then both $\mathcal{C}$ and $\mathcal{C}_c$ are stable Stone vector lattices. 
\end{examples}
Given a stable Stone vector lattice $\mathcal{L}$, a function $L\colon \mathcal{L}\to L^0$ is said to be 
\begin{itemize}
\item \emph{stable} if $L(\sum f_k|A_k)=\sum L(f_k)|A_k$ for all sequences $(f_k)$ in $\mathcal{L}$ and every partition $(A_k)$;
\item \emph{$L^0$-linear} if $L(f+rg)=L(f)+rL(g)$ for all $f,g\in \mathcal{L}$ and $r\in L^0$;   
\item \emph{continuous from above} if $L(f_n)\downarrow 0$ a.s.~whenever $f_n(x)\downarrow 0$ a.s.~for all $x\in X$.  
\end{itemize}
We have the following conditional version of the Daniell-Stone theorem. 
\begin{theorem}\label{t:ds}
Let $\mathcal{L}$ be a stable Stone vector lattice and $L:\mathcal{L}\to L^0$ stable, $L^0$-linear, continuous from above and such that $L(f)\geq 0$ whenever $f\geq 0$. 
Then there exists a stable measure $\mu$ on 
\[
\Sigma(\mathcal{L}):=\Sigma(\{f^{-1}(V|A)\colon V|A\in \B(L^0), f\in \mathcal{L}\})
\]
such that 
\[
L(f)=\int_X f d\mu \quad \text{for all }f\in \mathcal{L}. 
\]
\end{theorem}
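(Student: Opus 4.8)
The plan is to transcribe the classical Daniell construction of a measure from a positive functional into the $L^0$-valued framework of Section~\ref{s:measure}. First I would form the stable \emph{upper class} $\mathcal{L}^\uparrow$: the collection of all $g\colon X\to\bar L^0_+$ that are the pointwise a.s.\ supremum of some increasing sequence $(g_n)$ in $\{h\in\mathcal{L}\colon h\geq 0\}$, together with all their countable concatenations along partitions of $\Omega$. Using that $\mathcal{L}$ is a stable Stone vector lattice one checks that $\mathcal{L}^\uparrow$ is stable and closed under addition, multiplication by scalars in $L^0_+$ and countable suprema, and that concatenation commutes with suprema in $\bar L^0_+$. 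One extends $L$ to $\mathcal{L}^\uparrow$ by $L(g):=\sup_n L(g_n)\in\bar L^0_+$. The first---and I expect the main---step is the \emph{consistency lemma}: if $(g_n)$ and $(h_m)$ are increasing sequences in $\{h\in\mathcal{L}\colon h\geq 0\}$ with $\sup_n g_n\geq\sup_m h_m$, then $\sup_n L(g_n)\geq\sup_m L(h_m)$. This is the conditional analogue of the classical statement: for fixed $m$ one has $h_m-h_m\wedge g_n\downarrow 0$ a.s.\ pointwise on $X$ with $h_m-h_m\wedge g_n\in\mathcal{L}$, so continuity from above gives $L(h_m\wedge g_n)\uparrow L(h_m)$ a.s., and $L(h_m\wedge g_n)\leq L(g_n)$; letting $m\to\infty$ finishes it. The whole argument runs with $L^0$-valued quantities, the stability of $\mathcal{L}$ and of $L$ doing the bookkeeping. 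In particular the extended $L$ on $\mathcal{L}^\uparrow$ is well defined, stable and monotone.

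Next I would define, for $V|A\in\P(X)$,
\[
\mu^\ast(V|A):=\bigl(\inf\{\,L(g)\colon g\in\mathcal{L}^\uparrow,\ g\geq 1_{V|A}\,\}\bigr)|A+0|A^c\ \in\ \bar L^0_+,
\]
with the convention $\inf\emptyset:=\infty$, where $1_{V|A}$ is the stable indicator of Section~\ref{s:measure}. Monotonicity (A2) is immediate; property (A1) follows from (D1), the conditioning on $A$, and the stability of $\mathcal{L}$ and $L$; countable subadditivity (A3) is obtained by the same $L^0$-valued $\eps$-argument as in the proof of Theorem~\ref{t:caratheodory}: for each $k$ choose $g_k\in\mathcal{L}^\uparrow$ with $L(g_k)\leq\mu^\ast(V_k|A_k)+2^{-k}\eps$ where finite, note $\sum_k g_k\in\mathcal{L}^\uparrow$ and $\sum_k g_k\geq 1_{\sqcup_k(V_k|A_k)}$. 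Hence $\mu^\ast$ is a stable outer measure, and Proposition~\ref{p:outermeas} produces a stable measure---still written $\mu^\ast$---on the stable $\sigma$-algebra $\X(\mu^\ast)$ of stably $\mu^\ast$-measurable sets.

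The third step is $\Sigma(\mathcal{L})\subseteq\X(\mu^\ast)$. By the remark after Theorem~\ref{pilambda} it suffices to check that $f^{-1}(]r,\infty[)\in\X(\mu^\ast)$ for all $f\in\mathcal{L}$ and $r\in L^0$, and by Boolean arithmetic (splitting off $f^+$ and $f^-$) this reduces to conditional level sets $g^{-1}(]c,\infty[)$ with $g\in\mathcal{L}$, $g\geq 0$, and $c\in\Q$, $c>0$. Here the Stone condition is essential: the functions $\min\{\,n(g-\min\{g,c\}),\,1\,\}$ lie in $\mathcal{L}$ (via $\min\{g,c\}=c\min\{g/c,1\}$ and $\min\{\cdot,1\}\in\mathcal{L}$) and increase a.s.\ to $1_{g^{-1}(]c,\infty[)}$, so this stable indicator belongs to $\mathcal{L}^\uparrow$; feeding this into the Carath\'eodory splitting inequality, carried out as in Proposition~\ref{p:outermeas}, yields the measurability of $g^{-1}(]c,\infty[)$. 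Restricting $\mu^\ast$ to $\Sigma(\mathcal{L})$ gives a stable measure $\mu$; moreover every $f\in\mathcal{L}$ is then stably $\Sigma(\mathcal{L})$-measurable, and since $r\cdot\min\{n(|f|-\min\{|f|,r\}),1\}\leq|f|$ one gets $\mu(f^{-1}(]r,\infty[))\leq \frac1r L(|f|)<\infty$ for $r\in L^0_{++}$, so every $f\in\mathcal{L}$ is $\mu$-integrable.

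It remains to prove $L(f)=\int_X f\,d\mu$ for $f\in\mathcal{L}$. Writing $f=f^+-f^-$ with $f^\pm\in\mathcal{L}$ and invoking $L^0$-linearity of $L$ and of the stable integral (I3), one reduces to $f\geq 0$. For such $f$ the consistency lemma together with continuity from above gives $L(f)=\inf\{L(g)\colon g\in\mathcal{L}^\uparrow,\ g\geq f\}$ (``$\leq$'' since $f\in\mathcal{L}\subseteq\mathcal{L}^\uparrow$, and ``$\geq$'' since $g_n\wedge f\uparrow f$ whenever $g=\sup_n g_n\geq f$), while the Daniell upper integral $L^\ast(h):=\inf\{L(g)\colon g\in\mathcal{L}^\uparrow,\ g\geq h\}$ is continuous along increasing sequences of nonnegative functions (another $L^0$-valued $\eps$-argument) and agrees with $\mu$ on the conditional level sets of $f$; approximating $f$ from below by the stable elementary functions of Lemma~\ref{l:approxbysimplefct} and applying Theorem~\ref{t:monotone convergence} then identifies $L^\ast(f)$ with $\int_X f\,d\mu$, so $L(f)=\int_X f\,d\mu$. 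Beyond these transcriptions no essentially new idea is required; the genuine obstacles are the consistency lemma of the first paragraph and the use of the Stone condition in the measurability step.
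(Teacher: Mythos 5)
Your construction is correct and is precisely the route the paper has in mind: the paper omits the proof of Theorem~\ref{t:ds}, remarking only that it is an adaptation of the classical Daniell argument via Carath\'eodory's extension theorem, and your upper class $\mathcal{L}^\uparrow$, the consistency lemma via continuity from above, the outer measure fed into Proposition~\ref{p:outermeas}, the Stone-condition approximation of level-set indicators, and the final identification through Lemma~\ref{l:approxbysimplefct} and Theorem~\ref{t:monotone convergence} are exactly that adaptation. The only points you should make explicit are the localization of the infimum defining $\mu^\ast$ (define it on the largest $B_{V|A}$ on which a dominating $g\in\mathcal{L}^\uparrow$ exists, obtained by an exhaustion argument exactly as in the proof of Theorem~\ref{t:caratheodory}, rather than via a global $\inf\emptyset=\infty$ convention) and the reduction from levels $r\in L^0$ to rational constants (approximate $r$ from above by rational-valued step functions and use that $\X(\mu^\ast)$ is a stable collection closed under countable conditional unions, so that $f^{-1}(]r,\infty[)$ is recovered from the $f^{-1}(]q,\infty[)$, $q\in\Q$, by concatenation and conditional union).
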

In the following proofs of the conditional Riesz representation results it is necessary to pass from the sequential continuity of a sequence of stable functions to its sequential uniform continuity. 
This can be achieved by the following conditional version of Dini's theorem. 
In the statement of the next lemma, we use the following extension of a sequence of stable functions. 
If $(f_n)$ is a classical sequence of stable functions, we can extend $(f_n)$ to a stable net parametrized by $L^0_s(\N)$ by defining $f_n:=\sum f_{n_k}|A_k$ for $n=\sum n_k|A_k\in L^0_s(\N)$. 
\begin{lemma}\label{t:dini}
Let $X\subset (L^0)^d$ be stably compact and $(f_n)$ a decreasing sequence of stable and sequentially continuous functions $f_n\colon X\to L^0$. 
Let $f\colon X\to L^0$ be stable sequentially continuous such that $f_{n}(x)\to f(x)$ a.s.~for all $x\in X$. 
Then, for every $r\in L^0_{++}$ there exists $n_0\in L^0_s(\N)$ such that $\sup_{x\in X}\vert f_n(x)-f(x)\vert< r$ for all $n\geq n_0$. 
\end{lemma}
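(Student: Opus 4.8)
The plan is to reduce to a nonnegative sequence decreasing to zero and then transcribe the classical Dini argument into the conditional topology, the point being that the conditional Heine--Borel theorem \cite[Theorem 4.6]{DJKK13} lets one pass from a conditional open cover of the stably compact set $X$ to a finite conditional subcover, which (after a gluing step) collapses to a single index in $L^0_s(\N)$.

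First I would set $g_n:=f_n-f$ for $n\in\N$. Since $(f_n)$ is decreasing with $f_n(x)\to f(x)$ a.s.\ for every $x\in X$, the pointwise differences $g_n$ are stable and sequentially continuous, the sequence $(g_n)$ is decreasing, $g_n(x)\downarrow 0$ a.s.\ for all $x\in X$, and $|f_n(x)-f(x)|=g_n(x)$. Fix $r\in L^0_{++}$. As $L^0$ and $(L^0)^d$ carry the stable topologies of their $L^0$-valued norms and are stably metrizable, sequential continuity coincides with conditional continuity; hence, with $]-\infty,r/2[$ the conditionally open half-line of Section~\ref{s:measure} and $g_n^{-1}$ the conditional preimage \eqref{eq:preimage}, each
\[
U_n:=g_n^{-1}\bigl(\,]-\infty,r/2[\,\bigr)\sqsubseteq X
\]
is a conditionally open conditional subset of $X$.

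Next I would check that $\{U_n\}$ covers $X$. Given $x\in X$, the sets $D_n:=\{g_n(x)<r/2\}$ increase a.s.\ to $\Omega$ because $g_n(x)\downarrow0$ a.s.\ and $r>0$ a.s.; using the stability of the set of witnesses one sees $x|D_n\in U_n|D_n$, and gluing along the partition $B_1:=D_1$, $B_k:=D_k\setminus(D_1\cup\dots\cup D_{k-1})$ via the defining formula \eqref{eq:union} yields $x\in\sqcup_{n\in\N}U_n$; thus $\sqcup_{n\in\N}U_n=X$. Passing to the stable hull, $\{U_n:n\in L^0_s(\N)\}$ is a conditional open cover of $X$ (here $U_n=(\sum_k g_{n_k}|A_k)^{-1}(\,]-\infty,r/2[\,)$ for $n=\sum_k n_k|A_k$, cf.\ \eqref{eq:concatenationfct}), and since $(g_n)$ is decreasing we have $U_m\sqsubseteq U_n$ whenever $m\le n$ in $L^0_s(\N)$, so this family is conditionally directed. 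By the conditional Heine--Borel theorem \cite[Theorem 4.6]{DJKK13}, the stably compact set $X$ admits a finite conditional subcover $U_{n_1},\dots,U_{n_j}$; by directedness and a gluing along the sets $\{\max_i n_i=n_i\}$ one gets $\sqcup_i U_{n_i}=U_{n_0}$ with $n_0:=\max_i n_i\in L^0_s(\N)$, whence $U_{n_0}=X$. Unwinding \eqref{eq:preimage}, the equality $U_{n_0}=X$ means precisely $g_{n_0}(x)\in\,]-\infty,r/2[$, i.e.\ $g_{n_0}(x)<r/2$ a.s., for \emph{every} $x\in X$; therefore $\sup_{x\in X}g_{n_0}(x)\le r/2<r$ a.s. Finally, for any $n\ge n_0$ in $L^0_s(\N)$ monotonicity gives $g_n\le g_{n_0}$, hence $\sup_{x\in X}|f_n(x)-f(x)|=\sup_{x\in X}g_n(x)\le\sup_{x\in X}g_{n_0}(x)<r$, which is the claim.

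The step I expect to be most delicate is the interface between the pointwise hypothesis $g_n(x)\downarrow0$ and the conditional topology: one must argue, with the conditional-union formula \eqref{eq:union} and several exhaustion/gluing arguments (in the spirit of Remark~\ref{r:exhaustion}), that the $U_n$ genuinely form a conditional open cover and that a finite conditional subcover collapses to a single $U_{n_0}$ with $n_0\in L^0_s(\N)$; once these bookkeeping points are settled, conditional Heine--Borel supplies the only substantial input. Alternatively one can avoid conditional topology and argue by contradiction: if no such $n_0$ existed, then $B:=\cap_n\{\sup_{x\in X} g_n(x)\ge r\}\in\F_+$; selecting by stability $x_n\in X$ with $g_n(x_n)\ge r/2$ on $B$, extracting via conditional sequential compactness a stable subsequence $x_{n_k}\to x_\ast\in X$, and using $g_{n_k}\le g_m$ on $\{n_k\ge m\}$ together with sequential continuity of $g_m$ gives $g_m(x_\ast)\ge r/2$ on $B$ for every $m\in\N$, contradicting $g_m(x_\ast)\downarrow0$.
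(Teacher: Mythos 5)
Your argument is correct and is essentially the paper's own proof: the paper simply refers to the classical Dini argument carried out with the open-covering characterization of stable compactness from \cite{DJKK13}, and your main proof is a faithful (and more detailed) transcription of exactly that, with the expected bookkeeping via conditional preimages, gluing along the partition $\{D_n\}$, and collapsing the directed cover to a single index $n_0\in L^0_s(\N)$. The alternative contradiction argument via conditional sequential compactness is also sound, but it is not needed.
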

\begin{proof}
The statement can be proved similarly to the respective classical statement, see e.g.~\cite[Theorem 2.66]{aliprantis01}, by using the characterization of stable compactness in terms of stable open coverings, see \cite{DJKK13}. 
\end{proof}
We introduce the following regularity conditions for stable measures on $(L^0)^d$. 
\begin{definition}
A stable measure $\mu$ on $\mathcal{B}^d$ is called 
\begin{itemize}
\item \emph{conditionally closed regular} whenever 
  \begin{equation*}
\mu(V|A)=\sup\{\mu(W|B)\colon W|B\sqsubseteq V|A \text{ sequentially closed}\}
\end{equation*}
for all $V|A\in\mathcal{B}^d$; 
\item \emph{conditionally regular} whenever 
\begin{equation*}
\mu(V|A)=\sup\{\mu(W|B)\colon W|B\sqsubseteq V|A \text{ stably compact}\}
\end{equation*}
for all $V|A\in\mathcal{B}^d$.
\end{itemize}
\end{definition}
\begin{theorem}\label{t:riesz1}
Let $L:\mathcal{C}\to L^0$ be stable, $L^0$-linear and such that $L(f)\geq 0$ whenever $f\geq 0$. 
Then there exists a conditionally regular finite stable measure $\mu$ on $\mathcal{B}^d$ such that 
\[
L(f)=\int_{(L^0)^d} f d\mu \quad \text{for all }f\in \mathcal{C}. 
\]
\end{theorem}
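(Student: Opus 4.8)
The plan is to obtain the representation from the conditional Daniell--Stone theorem (Theorem~\ref{t:ds}) and to establish regularity afterwards, using the conditional Dini theorem (Lemma~\ref{t:dini}) together with a family of explicit cut-off functions built from the $L^0$-valued Euclidean norm $\|\cdot\|$ on $(L^0)^d$. For $n\in\N$ set $\psi_n(x):=(\|x\|-n)^+\wedge 1$; then $\psi_n\in\mathcal{C}$, $0\le\psi_n\le 1$, the sequence $(\psi_n)$ is decreasing with $\psi_n(x)\downarrow 0$ a.s.\ for each $x$ (since $\{\|x\|\le n\}\uparrow\Omega$ a.s.), $\psi_n\equiv 1$ on $\{x\colon\|x\|\ge n+1\}$, and $1-\psi_n$ is supported in the ball $\bar B_{n+1}:=\{x\colon\|x\|\le n+1\}$, which is $L^0$-bounded and sequentially closed, hence stably compact. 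The first key point is that $L(\psi_n)\downarrow 0$ a.s.: the pointwise sum $g:=\sum_{n\ge 1}\psi_n$ again lies in $\mathcal{C}$ (on each $\bar B_m$ it reduces to a finite sum of functions in $\mathcal{C}$, and it is finite-valued since $g\le\|\cdot\|+1$), so $L(g)\in L^0$, whereas positivity and $L^0$-linearity give $L(g)\ge N\inf_m L(\psi_m)$ for every $N$; hence $\inf_m L(\psi_m)=0$ a.s., as otherwise $L(g)=\infty$ on a non-null set. The same argument applied to $fg=\sum_{n\ge 1}f\psi_n\in\mathcal{C}$ shows $L(f\psi_n)\downarrow 0$ a.s.\ for every $f\in\mathcal{C}$ with $f\ge 0$.

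Next I would verify that $L$ is continuous from above. Let $f_n(x)\downarrow 0$ a.s.\ for all $x$ with $f_n\in\mathcal{C}$; then $f_n\ge 0$ and $L(f_n)$ decreases to some $b\in L^0_+$. Fix $R\in\N$ and estimate $f_n=(1-\psi_R)f_n+\psi_R f_n\le M_n\cdot 1+\psi_R f_1$, where $M_n:=\sup_{\bar B_{R+1}}f_n$. Applying Lemma~\ref{t:dini} on the stably compact ball $\bar B_{R+1}$ to the decreasing sequence $(f_n)$, which converges pointwise to the stable sequentially continuous function $0$, produces for every $r\in L^0_{++}$ an index $n_0\in L^0_s(\N)$ with $M_n<r$ for $n\ge n_0$; reading this off along the partition that defines $n_0$ yields $M_n\to 0$ a.s. Monotonicity and $L^0$-linearity of $L$ then give $b\le M_nL(1)+L(f_1\psi_R)$, hence $b\le L(f_1\psi_R)$, and letting $R\to\infty$ while invoking the tail estimate of the previous paragraph forces $b=0$. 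Since $\mathcal{C}$ is a stable Stone vector lattice and $L$ is stable, $L^0$-linear, positive and continuous from above, Theorem~\ref{t:ds} delivers a stable measure $\mu$ on $\Sigma(\mathcal{C})$ with $L(f)=\int_{(L^0)^d}f\,d\mu$ for all $f\in\mathcal{C}$. As $1\in\mathcal{C}$ we get $\mu((L^0)^d)=L(1)\in L^0_+$, so $\mu$ is finite; and since for each $x_0\in(L^0)^d$ the map $x\mapsto\|x-x_0\|$ lies in $\mathcal{C}$ with conditional pre-image of $]-\infty,r[$ equal to the stable open ball $B_r(x_0)$, while conversely every $f\in\mathcal{C}$ is stably measurable from $\B^d$ to $\B(L^0)$, it follows that $\Sigma(\mathcal{C})=\B^d$.

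It then remains to show that $\mu$ is conditionally regular. First, any finite stable Borel measure on $(L^0)^d$ is conditionally closed regular: the collection of $V|A\in\B^d$ admitting, for every $\eps\in L^0_{++}$, a sequentially closed inner and a sequentially open outer approximant within $\eps$ is, by (M8) applied to $\{x\colon\mathrm{dist}(x,F)<1/n\}\downarrow F$ and a routine $\eps 2^{-n}$ combination, a stable $\sigma$-algebra containing all sequentially closed sets, hence all of $\B^d$. Second, $1_{\{\|x\|\ge n+1\}}\le\psi_n$ yields $\mu(\{\|x\|\ge n+1\})\le\int\psi_n\,d\mu=L(\psi_n)\to 0$, so $\mu$ is conditionally tight. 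Finally, given $V|A\in\B^d$ and $\eps\in L^0_{++}$, choose a sequentially closed $W|B\sqsubseteq V|A$ with $\mu(W|B)\ge\mu(V|A)-\eps/2$; since $W|B\sqcap\bar B_M\uparrow W|B$ as $M\to\infty$ there is, by (M7), some $M$ with $\mu(W|B\sqcap\bar B_M)\ge\mu(W|B)-\eps/2$, and $W|B\sqcap\bar B_M$ is sequentially closed and $L^0$-bounded, hence stably compact and $\sqsubseteq V|A$. As $\eps$ is arbitrary, this establishes conditional regularity.

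The main obstacle is the passage to Theorem~\ref{t:ds}, i.e.\ proving that $L$ is continuous from above on the ``non-compact'' space $(L^0)^d$ even though the terms of a decreasing sequence need not be $L^0$-bounded: this is handled by splitting off a stably compact ball, on which Lemma~\ref{t:dini} applies, and absorbing the tail through the unbounded test function $g$, whose only leverage is that $L$ takes values in $L^0$ rather than in $\bar L^0$. A secondary point of care is the interface with Lemma~\ref{t:dini}: its index bound $n_0$ lives in $L^0_s(\N)$ rather than in $\N$, so the conclusion $M_n\to 0$ a.s.\ for the classical sequence $(M_n)_{n\in\N}$ has to be extracted piecewise along the partition associated with $n_0$.
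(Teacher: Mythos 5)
Your proof is correct and follows essentially the same route as the paper: continuity from above is obtained by splitting a decreasing sequence into a part supported on a stably compact ball (handled by the conditional Dini lemma, Lemma~\ref{t:dini}) and a tail controlled by the finiteness of $L$ on an unbounded member of $\mathcal{C}$, after which Theorem~\ref{t:ds} together with a good-sets argument for closed regularity and a tightness estimate yields the representation and conditional regularity. The only substantive difference is your tail device $g=\sum_{n}\psi_n$ with $L(g)\geq N\inf_m L(\psi_m)$ --- a clean variant of the paper's bound $\tfrac{1}{2k}L(f_1(1-g_k)\|\cdot\|)\leq\tfrac{1}{2k}L(f_1\|\cdot\|)$ --- and you are in fact more explicit than the paper about the identification $\Sigma(\mathcal{C})=\mathcal{B}^d$ and about extracting $M_n\to 0$ piecewise from the $L^0_s(\N)$-valued Dini index.
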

\begin{proof}
Let $(f_l)$ be a sequence of stable functions in $\mathcal{C}$ such that $f_l\downarrow 0$. 
Let $V_k:=\{x\in  (L^0)^d \colon \|x\|\leq k\}$ for $k\in \N$, and notice that $V_k$ is stably compact. 
By triangle inequality, $d(\cdot,V_k):=\inf\{\|\cdot - y\|\colon y\in V_k\}$ is sequentially continuous, and by \cite[Theorem 4.4]{Cheridito2012}, the infimum is attained since it is also stable. 
Define $g_k:=\max\{1-d(\cdot,V_k),0\}$ and $h_{k,l}:=g_kf_l + (1-g_k)f_1\|\cdot\|/2k$ for all $k,l\in \N$. 
By definition, $f_l\leq h_{k,l}$ for all $k,l$. 
Fix $r\in L^0_{++}$. 
By changing, if necessary, to a stable net, let $k\in L^0_s(\N)$ be large enough such that $1/(2k) L(f_1(1-g_k)\| \cdot\|)<r/2$. 
By Lemma \ref{t:dini}, choose $l\in L^0_s(\N)$ sufficiently large such that $L(g_kf_l)<r/2$. 
We obtain 
\[
L(f_l)\leq L(g_kf_l) + 1/(2k) L((1-g_k)f_1\| \cdot\|)<r.
\]
By Theorem \ref{t:ds}, there exists a finite stable measure $\mu$ on $\mathcal{B}^d$ such that 
\[
L(f)=\int_{(L^0)^d} f d\mu \quad \text{for all }f\in \mathcal{C}.  
\]
As for the conditional tightness of $\mu$, let 
\begin{equation*}
\mathcal{G}:=\{V|A\in\mathcal{B}^d\colon V|A \text{ and } (V|A)^\sqsubset \text{ are conditionally closed regular}\}. 
 \end{equation*}
 It follows from (M5) that $(L^0)^d\in\mathcal{G}$, and thus from (S4) and the stability of $\mu$ that $\mathcal{G}$ is a stable collection. 
 By definition, $\mathcal{G}$ is closed under conditional complementation. 
 Let $(W_n|B_n)$ be a sequence in $\mathcal{G}$. 
 Fix $r\in L^0_{++}$.
 Let $Z_n|C _n\sqsubseteq W_n|B _n$ be sequentially closed with $\mu(W_n|B_n\sqcap (Z_n|C _n)^\sqsubset)<r/2^n$ for each $n$. 
 By (M6) and Boolean arithmetic, we obtain $\mu((\sqcup_n W_n|B_n)\sqcap(\sqcup_n (Z_n|C _n)^\sqsubset))<r$. 
 Since $r\in L^0_{++}$ is arbitrary, we have $\sqcup_n W_n|B_n\in\mathcal{G}$. 
 It follows from (M5) that also $(\sqcup_n W_n|B_n)^\sqsubset\in\mathcal{G}$. 
 It remains to show that every stable open ball is conditionally closed regular. 
 Let $B_r(x)$ be a stable open ball. 
 Define $U_n:=\{x\in (L^0)^d\colon d(x,B_r(x)^\sqsubset)\geq 1/n\}$. 
 Since $(U_n)$ is a sequence of stable sequentially closed sets with $\sqcup_n U_n=B_r(x)$, it follows from (M7) that $B_r(x)$ is conditionally closed regular. 
 Finally, for every $V|A\in \B^d$ we have $\mu(V|A\sqcap V_n)\uparrow \mu(V|A)$ due to (M7), where $V_n:=\{x\in  (L^0)^d \colon \|x\|\leq n\}$ for $n\in \N$. 
 Moreover, for every $n$ there exists a sequentially closed set $W_n|B_n\sqsubseteq V|A\sqcap V_n$ such that $\mu(W_n|B_n)\geq \mu(V|A\sqcap V_n)-1/n$. 
 Since a stable sequentially closed subset of a stably compact set is stably compact (see \cite[Proposition 3.27]{DJKK13}), it follows that $\mu$ is conditionally tight.  
\end{proof}
\begin{theorem}\label{t:riesz}
Let $L:\mathcal{C}_c\to L^0$ be stable, $L^0$-linear and such that $L(f)\geq 0$ whenever $f\geq 0$. 
Then there exists a stable measure $\mu$ on $\mathcal{B}^d$ such that 
\[
L(f)=\int_{ (L^0)^d } fd\mu \quad \text{ for all } f\in\mathcal{C}_c. 
\]
Moreover, it holds 
\begin{itemize}
\item $\mu([x,y])<\infty$ for all stably compact intervals $[x,y]:=\{z\in (L^0)^d\colon x \leq z \leq y\}$;\footnote{We consider on $(L^0)^d$ the order $(x_1,\ldots,x_d)\leq (y_1,\ldots,y_d)$ if and only if $x_i\leq y_i$ for all $i=1,\ldots,d$.}
\item $\mu(V|A)=\sup\{\mu(W|B)\colon W|B\sqsubseteq V|A \text{ stably compact}\}$ for all $V|A\in\mathcal{B}^d$ with $\mu(V|A)<\infty$.  
\end{itemize}
\end{theorem}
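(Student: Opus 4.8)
The plan is to exhaust $(L^0)^d$ by the stably compact balls $V_n:=\{x\in(L^0)^d\colon\|x\|\le n\}$, to localize $L$ to functionals defined on the full lattice $\mathcal{C}$ by multiplying with bump functions, to apply Theorem~\ref{t:riesz1} to each localization, and finally to glue the resulting finite measures by a countable supremum. Concretely, with $d(\cdot,V_n):=\inf\{\|\cdot-y\|\colon y\in V_n\}$ (stable and sequentially continuous, as in the proof of Theorem~\ref{t:riesz1}) I put $g_n:=\max\{1-d(\cdot,V_n),0\}$; then $0\le g_n\le g_{n+1}\le 1$, $g_n=1$ on $V_n$, $g_n\uparrow 1$ pointwise, and $\text{supp}(g_n)\sqsubseteq V_{n+1}$ is $L^0$-bounded and sequentially closed, so $g_n\in\mathcal{C}_c$. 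For $f\in\mathcal{C}$ the function $g_nf$ is again stable and sequentially continuous with $\text{supp}(g_nf)\sqsubseteq\text{supp}(g_n)$ stably compact, hence $g_nf\in\mathcal{C}_c$, and $L_n\colon\mathcal{C}\to L^0$, $L_n(f):=L(g_nf)$, is a well-defined stable, $L^0$-linear, positive functional. Theorem~\ref{t:riesz1} then provides a conditionally regular finite stable measure $\mu_n$ on $\mathcal{B}^d$ with $L_n(f)=\int_{(L^0)^d}f\,d\mu_n$ for all $f\in\mathcal{C}$.

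The heart of the argument is the monotonicity $\mu_n\le\mu_{n+1}$ on all of $\mathcal{B}^d$. From $g_n\le g_{n+1}$ and positivity of $L$ one gets $\int f\,d\mu_n=L_n(f)\le L_{n+1}(f)=\int f\,d\mu_{n+1}$ for every $f\ge 0$ in $\mathcal{C}$. To transfer this to sets, note first that any stably compact $W|B$ satisfies $W|B\sqsubseteq V_N$ for a suitable $N\in L^0_s(\N)$: take $N:=\lceil M\rceil\vee 1$ for an $L^0$-bound $M$ of $\|\cdot\|$ on $W|B$; since $M<\infty$ a.s.\ this is a step function. Hence the cut-offs $\psi_k:=\max\{1-k\,d(\cdot,W|B),0\}$ lie in $\mathcal{C}_c$ and decrease pointwise to $1_{W|B}$, and applying Theorem~\ref{t:monotone convergence} to $\psi_1-\psi_k\uparrow\psi_1-1_{W|B}$ (with $\int\psi_1\,d\mu_m<\infty$) gives $\int\psi_k\,d\mu_m\downarrow\mu_m(W|B)$, so that $\mu_n(W|B)\le\int\psi_k\,d\mu_n\le\int\psi_k\,d\mu_{n+1}\to\mu_{n+1}(W|B)$. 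Invoking the conditional regularity of $\mu_n$ and $\mu_{n+1}$ lifts $\mu_n\le\mu_{n+1}$ from stably compact conditional sets to all of $\mathcal{B}^d$. Consequently $\mu(V|A):=\sup_n\mu_n(V|A)$ is well defined as the supremum of an increasing sequence in $\bar L^0_+$; since the supremum is taken pointwise it is compatible with concatenations along partitions, so $\mu$ is a stable set function, it satisfies (M1) because $\mu_n(V|A)|A^c=0$, and it satisfies (M2) by interchanging $\sup_n$ with $\sup_N\sum_{k\le N}$ over pairwise disjoint families. Thus $\mu$ is a stable measure on $\mathcal{B}^d$.

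For the representation, given $f\in\mathcal{C}_c$ with $f\ge 0$ I choose $N\in L^0_s(\N)$ with $\text{supp}(f)\sqsubseteq V_N$ as above; then $g_mf=f$ for all $m\ge N$, so $\int f\,d\mu_m=L_m(f)=L(f)$ for $m\ge N$. Approximating $f$ from below by stable elementary functions (Lemma~\ref{l:approxbysimplefct}) and using Theorem~\ref{t:monotone convergence} together with the interchange of suprema of increasing families gives $\int f\,d\mu=\sup_m\int f\,d\mu_m$, hence $\int f\,d\mu=L(f)$. For arbitrary $f\in\mathcal{C}_c$ write $f=f^+-f^-$ with $f^\pm\in\mathcal{C}_c$; each $\int f^\pm\,d\mu=L(f^\pm)$ lies in $L^0$, so (I3) and $L^0$-linearity of $L$ yield $\int f\,d\mu=L(f)$. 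For the two ``moreover'' assertions: if $[x,y]$ is stably compact, the Urysohn function $\psi:=\max\{1-d(\cdot,[x,y]),0\}\in\mathcal{C}_c$ satisfies $1_{[x,y]}\le\psi$, so $\mu([x,y])\le\int\psi\,d\mu=L(\psi)\in L^0$ and thus $\mu([x,y])<\infty$; and if $\mu(V|A)<\infty$, then for $\eps\in L^0_{++}$ an exhaustion argument furnishes $m^\ast\in L^0_s(\N)$ with $\mu_{m^\ast}(V|A)\ge\mu(V|A)-\eps$ (the concatenated measure $\mu_{m^\ast}$ being again conditionally regular), and conditional regularity of $\mu_{m^\ast}$ supplies a stably compact $W|B\sqsubseteq V|A$ with $\mu(W|B)\ge\mu_{m^\ast}(W|B)\ge\mu(V|A)-2\eps$; since $\eps$ is arbitrary and (M4) gives the reverse inequality, $\mu$ is conditionally regular on sets of finite measure.

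The step I expect to be the main obstacle is the monotonicity lifting of the second paragraph: passing from the inequality of integrals against $\mathcal{C}$ to the inequality of measures on all Borel conditional sets genuinely uses the conditional regularity output of Theorem~\ref{t:riesz1}, and the whole construction must be arranged so that the cut-offs $g_m$ really catch up with a given $f\in\mathcal{C}_c$ — which rests on the observation that the conditional support of any $f\in\mathcal{C}_c$, being $L^0$-bounded, lies inside some $V_N$ with $N\in L^0_s(\N)$ rather than merely inside some classical ball. The remaining verifications (the $L^0$-valued $\eps$-arguments, the interchange of suprema, and the stability of the glued objects) are routine in the framework developed above.
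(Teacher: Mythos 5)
Your argument is correct, but it follows a genuinely different route from the paper's. The paper treats this as a direct corollary of the conditional Daniell--Stone theorem (Theorem~\ref{t:ds}): the only hypothesis to verify is continuity from above of $L$ on $\mathcal{C}_c$, which is obtained in one line by applying the conditional Dini theorem (Lemma~\ref{t:dini}) to the sequence $(1_{\mathrm{supp}(f_1)}f_n)$ on the stably compact set $\mathrm{supp}(f_1)$; the finiteness on stably compact intervals then comes from the same Urysohn function you use, and the inner regularity on sets of finite measure is proved ``as in Theorem~\ref{t:riesz1}''. You instead bypass Theorem~\ref{t:ds} entirely and reduce to the already established compact case: you localize $L$ to positive $L^0$-linear functionals $L_n(f)=L(g_nf)$ on all of $\mathcal{C}$, invoke Theorem~\ref{t:riesz1} to produce conditionally regular finite stable measures $\mu_n$, lift the inequality $L_n\leq L_{n+1}$ on nonnegative functions to $\mu_n\leq\mu_{n+1}$ on $\mathcal{B}^d$ via Urysohn approximation of indicators of stably compact sets plus the conditional regularity of the $\mu_n$, and glue by $\mu=\sup_n\mu_n$. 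What your route buys is that no continuity-from-above verification is needed for $L$ itself and that $\mu$ lands on $\mathcal{B}^d$ from the outset (the paper implicitly needs $\Sigma(\mathcal{C}_c)=\mathcal{B}^d$); what it costs is the extra machinery of the monotone gluing -- the interchange of suprema in the proof of (M2) and of $\int f\,d\mu=\sup_m\int f\,d\mu_m$, and the passage to nets indexed by $L^0_s(\N)$ so that the cut-offs $g_m$ eventually dominate a given $f\in\mathcal{C}_c$ whose support is only $L^0$-bounded rather than bounded by a classical constant. You handle all of these points correctly (in particular the observation that an $L^0$-bound $M$ yields $N=\lceil M\rceil\vee 1\in L^0_s(\N)$ with $\mathrm{supp}(f)\sqsubseteq V_N$, and that the concatenated measure $\mu_{m^\ast}$ inherits conditional regularity), so the proposal stands as a self-contained alternative proof, albeit a longer one than the paper's two-line reduction.
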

\begin{proof}
In order to obtain the assumptions of Theorem \ref{t:ds}, given a sequence $(f_n)$ of stable functions in $\mathcal{C}_c$ such that $f_n\downarrow 0$,  apply Lemma \ref{t:dini} to the  sequence $(1_{\text{supp}(f_1)}f_n)$. We have $\mu([x,y])\leq \int_{(L^0)^d}f d\mu= L(f)<\infty$, where $f=\max\{1-d(\cdot,[x,y]),0\}$. 
The regularity condition can be shown similarly to Theorem \ref{t:riesz1}. 
\end{proof}


\begin{thebibliography}{10}

\bibitem{aliprantis01}
C.~D. Aliprantis and K.~C. Border.
\newblock {\em Infinite Dimensional Analysis: a Hitchhiker's Guide}.
\newblock Springer, 2006.

\bibitem{BH14}
J.~Backhoff and U.~Horst.
\newblock {Conditional analysis and a Principal-Agent problem}.
\newblock {\em SIAM J. Financial Math.}, 7(1):477--507, 2016.

\bibitem{martinsamuel13}
T.~Bielecki, I.~Cialenco, S.~Drapeau, and M.~Karliczek.
\newblock {Dynamic assessement indices}.
\newblock {\em Stochastics}, 88(1):1--44, 2016.

\bibitem{kuppermaccheroni}
S.~Cerreia-Vioglio, M.~Kupper, F.~Maccheroni, M.~Marinacci, and N.~Vogelpoth.
\newblock {Conditional $L_p$-spaces and the duality of modules over
  $f$-algebras}.
\newblock {\em J. Math. Anal. Appl.}, 444(2):1045--1070, 2016.

\bibitem{ch11}
P.~Cheridito and Y.~Hu.
\newblock {Optimal consumption and investment in incomplete markets with
  general constraints}.
\newblock {\em Stoch. Dyn.}, 11(2):283--299, 2011.

\bibitem{Cheridito2012}
P.~Cheridito, M.~Kupper, and N.~Vogelpoth.
\newblock {Conditional analysis on $\mathbb{R}^d$}.
\newblock {\em Set Optimization and Applications, Proceedings in Mathematics \&
  Statistics}, 151:179--211, 2015.

\bibitem{cs12}
P.~Cheridito and M.~Stadje.
\newblock {BS$\Delta$Es and BSDEs with non-Lipschitz drivers: comparison,
  convergence and robustness}.
\newblock {\em Bernoulli}, 19(3):1047--1085, 2013.

\bibitem{DJ13}
S.~Drapeau and A.~Jamneshan.
\newblock {Conditional preferences and their numerical representations}.
\newblock {\em J.~Math.~Econom.}, 63:106--118, 2016.

\bibitem{DJKK13}
S.~Drapeau, A.~Jamneshan, M.~Karliczek, and M.~Kupper.
\newblock {The algebra of conditional sets, and the concepts of conditional
  topology and compactness}.
\newblock {\em J.~Math.~Anal.~Appl.}, 437(1):561-- 589, 2016.

\bibitem{drapeau2017fenchel}
S.~Drapeau, A.~Jamneshan, and M.~Kupper.
\newblock {A Fenchel-Moreau theorem for $\bar L^0$-valued functions}.
\newblock {\em Preprint available at arXiv:1708.03127}, 2017.

\bibitem{DKKM13}
S.~Drapeau, M.~Karliczek, M.~Kupper, and M.~Streckfuss.
\newblock {Brouwer fixed point theorem in $(L^0)^d$}.
\newblock {\em J. Fixed Point Theory Appl.}, 301(1), 2013.

\bibitem{kupper03}
D.~Filipovi{\'c}, M.~Kupper, and N.~Vogelpoth.
\newblock {Separation and duality in locally $L^0$-convex modules}.
\newblock {\em J.~Funct.~Anal.}, 256:3996--4029, 2009.

\bibitem{kupper11}
D.~Filipovi{\'c}, M.~Kupper, and N.~Vogelpoth.
\newblock {Approaches to conditional risk}.
\newblock {\em SIAM J. Financial Math.}, 3(1):402--432, 2012.

\bibitem{frittelli14}
M.~Frittelli and M.~Maggis.
\newblock {Complete duality for quasiconvex dynamic risk measures on modules of
  the $L^{p}$-type}.
\newblock {\em Stat.~Risk Model.}, 31(1):103--128, 2014.

\bibitem{frittelli2014conditional}
M.~Frittelli and M.~Maggis.
\newblock Conditionally evenly convex sets and evenly quasi-convex maps.
\newblock {\em J. Math. Anal. Appl.}, 413:169--184, 2014.

\bibitem{guo10}
T.~Guo.
\newblock {Relations between some basic results derived from two kinds of
  topologies for a random locally convex module}.
\newblock {\em J.~Funct.~Anal.}, 258:3024--3047, 2010.

\bibitem{HR1987conditioning}
L.~P. Hansen and S.~F. Richard.
\newblock {The Role of Conditioning Information in Deducing Testable
  Restrictions Implied by Dynamic Asset Pricing Models}.
\newblock {\em Econometrica}, 55:587--613, 1987.

\bibitem{jamneshan2014topos}
A.~Jamneshan.
\newblock {Sheaves and conditional sets}.
\newblock {\em Preprint available at arXiv:1403.5405}, 2014.

\bibitem{JKZ2017control}
A.~Jamneshan, M.~Kupper, and J.~M. Zapata.
\newblock {Parameter-dependent stochastic optimal control in finite discrete
  time}.
\newblock {\em Preprint available at arXiv: 1705.02374}, 2017.

\bibitem{jamneshan2017compact}
A.~Jamneshan and J.~M. Zapata.
\newblock {On compactness in $L^0$-modules}.
\newblock {\em Preprint available at arXiv:1711.09785}, 2017.

\bibitem{Kallenberg2002}
O.~Kallenberg.
\newblock {\em {Foundations of Modern Probability}}.
\newblock Probability and its Applications (New York). Springer-Verlag, New
  York, second edition, 2002.

\bibitem{monk1989handbook}
J.~Monk, S.~Koppelberg, and R.~Bonnet.
\newblock {\em Handbook of Boolean Algebras}.
\newblock Handbook of Boolean Algebras: General Theory of Boolean Algebras.
  North-Holland, 1989.

\bibitem{OZ2017stabil}
J.~Orihuela and J.~M. Zapata.
\newblock {Stability in locally $L^0$-convex modules and a conditional version
  of James' compactness theorem}.
\newblock {\em J. Math. Anal. Appl.}, 452(2):1101--1127, 2017.

\bibitem{takeuti2015two}
G.~Takeuti.
\newblock {\em Two Applications of Logic to Mathematics}.
\newblock Publications of the Mathematical Society of Japan. Princeton
  University Press, 2015.

\end{thebibliography}
\end{document}